\providecommand{\keywords}[1]{\par\smallskip\textbf{\textbf{Keywords. }} #1}
\newtheorem{theorem}{Theorem}[section]
\newtheorem{lemma}[theorem]{Lemma}
\newtheorem{proposition}[theorem]{Proposition}
\theoremstyle{definition}
\newtheorem{definition}[theorem]{Definition}
\newtheorem{condition}[theorem]{Condition}
\newtheorem{example*}{Example}
\newtheorem*{running_example*}{Running Example}
\definecolor{thm_color_red}{RGB}{245, 188, 174}
\definecolor{thm_color}{RGB}{238, 187, 240}
\definecolor{dark_green}{RGB}{0, 123, 2}
\definecolor{very_light_blue}{RGB}{211, 255, 255}
\definecolor{dark_blue}{RGB}{0, 80, 255}
\definecolor{light_orange}{RGB}{255, 165, 0}
\definecolor{red_one}{RGB}{183, 0, 0}
\definecolor{gold}{RGB}{255, 255, 240}
\definecolor{gradient_blue_red}{RGB}{91, 40, 128}
\definecolor{turquoise}{RGB}{29, 169, 255}
\definecolor{turquoise_darker}{RGB}{0, 174, 255}
\definecolor{block_bgcolor1}{RGB}{29, 169, 255}
\definecolor{red_alert}{RGB}{222, 20, 20}
\newcommand{\dd}{ \mathrm{d}}
\newcommand{\PP}{\mathrm{P}}
\newcommand{\cA}{\mathcal{A}}
\newcommand{\cC}{\mathcal{C}}
\newcommand{\bZ}{\mathbb{Z}}
\title{Large-Deviation Principles of switching Markov processes via Hamilton-Jacobi Equations}
\author[1,2]{Mark A. Peletier}
\author[1]{Mikola C. Schlottke}
\affil[1]{Department of Mathematics and Computer Science, Eindhoven University of Technology}
\affil[2]{Institute for Complex Molecular Systems (ICMS), Eindhoven University of Technology}
\begin{document}
\maketitle
{\abstract{We prove pathwise large-deviation principles of switching Markov processes by exploiting the connection to associated Hamilton-Jacobi equations, following Jin Feng's and Thomas Kurtz's method~\cite{FengKurtz2006}. In the limit that we consider, we show how the large-deviation problem in path-space reduces to a spectral problem of finding principal eigenvalues. The large-deviation rate functions are given in action-integral form.
As an application, we demonstrate how macroscopic transport properties of stochastic models of molecular motors can be deduced from an associated principal-eigenvalue problem. The precise characterization of the macroscopic velocity in terms of principal eigenvalues confirms that breaking of detailed balance is necessary for obtaining transport. In this way, we extend and unify existing results about molecular motors and place them in the framework of stochastic processes and large-deviation theory.}}
\medskip

{\keywords{Large deviations, Hamilton-Jacobi equations, Markov processes, molecular motors, eigenvalue problems, homogenization, Feng-Kurtz method.}}
\section{Introduction}
In this paper we investigate large deviations for switching Markov processes that are motivated by stochastic models of molecular motors. Molecular motors are proteins that are capable of moving along filaments in a living cell. Molecular motors such as kinesin and dynein drag vesicles along while moving and thereby transport them within the cell. 
For more background on the phenomenon of molecular motors we refer to a number of reviews~\cite{JulicherAjdariProst1997,Howard2001,KolomeiskyFisher07,Kolomeisky13}.
\medskip

Molecular motors have a \emph{directionality}: they typically move in one direction only. A central challenge in the study of such motors is to understand the origin of this directionality, and characterize the speed of movement. In fact, mathematical models of molecular motors typically show no energetic benefit in moving in one direction or the other; the directionality arises from a non-trivial interplay between the microscopic features of such models and the dynamics of the motor. As a result, understanding how directionality arises as symmetry breaking in a-directional models is somewhat of a puzzle. 
\medskip

For certain models this puzzle has been solved, at least partially. 
Hastings, Kinderlehrer and Mcleod studied stationary solutions of certain Fokker-Planck equations and found sufficient conditions for the occurrence of transport~\cite{HastingsKinderlehrerMcleod08,HastingsKinderlehrerMcLeod2008}. Vorotnikov proved sufficient conditions for transport in deterministically switching~\cite{Vorotnikov11} and randomly switching systems~\cite{Vorotnikov14}. Perthame, Souganidis, and Mirrahimi developed a dynamic point of view on systems of molecular motors~\cite{PerthameSouganidis09a, PerthameSouganidis2009Asymmetric, Mirrahimi2013}. In particular, Mirrahimi and Souganidis prove convergence of solutions of a Fokker-Planck equation to a ballistically travelling pulse, with a velocity that is characterized by a periodic cell problem. 
\medskip

In this paper we extend the results of~\cite{Mirrahimi2013} to a much broader class of systems, make explicit the connection to stochastic processes, and  place the treatment squarely in the context of large-deviation theory. In this way we elaborate on the work by Perthame, Souganids and Mirrahimi, which appears to be inspired by large-deviation theory, as evidenced by the title of~\cite{PerthameSouganidis09a} and the use of terms such as `Hamiltonian'. 

\medskip

The larger class of stochastic processes that we consider is that of \emph{switching Markov processes in a periodic setting}. This class contains different models of molecular motors as special cases, including the \emph{continuum ratchet} and \emph{discrete stochastic} models (see~\cite{Kolomeisky13} and Section~\ref{sec:model-of-molecular-motor}, as well as~\cite{PerthameSouganidis09a,PerthameSouganidis2009Asymmetric,Mirrahimi2013,HastingsKinderlehrerMcleod08,HastingsKinderlehrerMcLeod2008}). 

\medskip

The first mathematical results of this paper (Theorems~\ref{thm:results:LDP_switching_MP} and~\ref{thm:results:action_integral_representation}; see Figure~\ref{fig:flow-diagram} below) are large-deviation theorems for such switching Markov processes. These generalize results by Kumar and Popovic~\cite{KumarPopovic2017} by focusing on pathwise large deviations, while placing more restrictive assumptions on the microscopic dynamics. Furthermore, instead of assuming the comparison principle to be satisfied as in~\cite[Lemma~1]{KumarPopovic2017}, we formulate conditions that imply the comparison principle. Faggionato and Silvestri establish large-deviation principles for fully discrete, `pseudo-one-dimensional' systems~\cite{FaggionatoSilvestri17}.

\medskip

A related line of research focuses on large-deviation principles for switching diffusions in a setting where the diffusion potentials do not have small-scale oscillations. Typical results provide large-deviation rate functionals that are simple sums of small-diffusion (`Freidlin-Wentzell') and occupation (`Donsker-Varadhan') rate functionals (see e.g.~\cite{FreidlinLee96,HeYin2014,HuangMandjesSpreij2016,BudhirajaDupuisGanguly2018,KraaijSchlottke20TR}). The rapid-scale oscillation of the potentials in this paper creates a stronger intertwining between the diffusion and switching dynamics, and consequently the rate function is not a simple sum but an expression that fully combines the dynamics of both components.

\medskip

Theorems~\ref{thm:results:LDP_switching_MP} and~\ref{thm:results:action_integral_representation} recover previous convergence results such as those of Mirrahimi and Souganidis~\cite[Th.~1.1-1.2]{Mirrahimi2013}. While the methods that Mirrahimi and Souganidis apply are inspired by large-deviation theory, they do not explicitly prove large deviation principles  but convergence statements on the level of Fokker-Planck equations. By proving large-deviation principles instead, we are able to make a clear distinction between the contributions that come from general large-deviation theory on the one hand, and the model-specific contributions on the other hand. 

For instance, our results explain from a large-deviation point-of-view why the velocity $v$ can be characterized by a cell problem that can be interpreted as defining a large-deviation Hamiltonian $\mathcal H$, through $v=\mathcal H'(0)$. 
The Hamiltonian depends on the specific model, while the relation $v=\mathcal H'(0)$ is independent of the microscopic details. 
This relation then also explains the well-known fact that detailed balance (microscopic reversibility) forces zero velocity. Indeed, we prove under general conditions (Theorem~\ref{thm:results:detailed_balance_limit_I}) that detailed balance leads to a symmetric Hamiltonian. By the characterization of the velocity as~$v=\mathcal{H}'(0)$, this means that detailed balance has to be broken in order for transport to occur.

\medskip
As another example, the numerical results of Wang, Peskin and Elston suggest that there is no transport in the limit of large reaction rates~\cite[Section~4.3, Figure~8(a)]{WangPeskinElston2003}. We also recover this result by proving that in this limit regime the Hamiltonian becomes symmetric (Theorem~\ref{thm:results:symmetry_limit_II}).



\medskip
\subsubsection*{Overview of the paper}

\medskip
In Section~\ref{sec:model-of-molecular-motor}, we illustrate the general results by means of a concrete example of a stochastic molecular-motor model. This provides a `running example' with which to interpret the general results that follow. We also outline with this example the relation  to the papers of Perthame, Souganidis and Mirrahimi.
\medskip

In Section~\ref{sec:preliminaries}, we introduce the concepts that we work with in order to rigorously formulate our results. In Section~\ref{sec:main-results} we present our main results. Figure~\ref{fig:flow-diagram} summarizes the relationships between the main theorems. Theorem~\ref{thm:results:LDP_switching_MP} provides general conditions under which the so-called spatial component of a switching Markov process satisfies a large-deviation principle. We identify the \emph{Hamiltonian}~$\mathcal{H}(p)$, a principal eigenvalue, as the central ingredient. 
Under the additional assumption that $p\mapsto \mathcal H(p)$ is convex, Theorem~\ref{thm:results:action_integral_representation} establishes an \emph{action-integral representation}.  Theorems~\ref{thm:results:LDP_switching_MP} and~\ref{thm:results:action_integral_representation} highlight the arguments that come from large-deviation theory.
\medskip

We then specialize to a concrete ratchet model of molecular motors. Theorems~\ref{thm:results:LDP_cont_MM_Limit_I} and~\ref{thm:results:LDP_cont_MM_Limit_II} establish the large-deviation theorems for two limit regimes. While Theorem~\ref{thm:results:LDP_cont_MM_Limit_I} generalizes the results in~\cite{Mirrahimi2013}, Theorem~\ref{thm:results:LDP_cont_MM_Limit_II} characterizes yet another limit regime. We include this result to illustrate how the general structure of proof remains unaffected by the choice of scaling. Finally, we show the symmetry of Hamiltonians under detailed balance (Theorem~\ref{thm:results:detailed_balance_limit_I}) and in the regime of scale separation (Theorem~\ref{thm:results:symmetry_limit_II}).
\smallskip

\tikzstyle{block} = [rectangle, draw, fill=white!20, 
    text width=10em, text centered, rounded corners, minimum height=4em]
\tikzstyle{line} = [draw, very thick, color=black!50, -latex']   
\tikzstyle{horline} = [draw, very thick, color=black!50, -latex', dotted]   
\begin{figure}[ht]
\begin{tikzpicture}[node distance = 3cm, auto]
\node [block] (LDP_SMP) {LDP for switching Markov processes (Theorem~\ref{thm:results:LDP_switching_MP})};
\node [block, below of = LDP_SMP] (AIR) {Action-Integral Representation (Theorem~\ref{thm:results:action_integral_representation})};
\node [block, below left of = AIR, node distance =4cm] (MM1) {Molecular-motor model, limit I (Theorem~\ref{thm:results:LDP_cont_MM_Limit_I})};
\node [block, below right of = AIR, node distance =4cm] (MM2) {Molecular-motor model, limit II (Theorem~\ref{thm:results:LDP_cont_MM_Limit_II})};
\node [block, below of=MM1] (DB) {Symmetry of Hamiltonian I (Theorem~\ref{thm:results:detailed_balance_limit_I})};
\node [block, below of=MM2] (ScSep) {Symmetry of Hamiltonian II (Theorem~\ref{thm:results:symmetry_limit_II})};
\node (c-LDP-AIR) at ($(LDP_SMP)!0.5!(AIR)$) {};
\node [right of = c-LDP-AIR] (ro-c-LDP-AIR) {};
\node [right of = ro-c-LDP-AIR] (convex) {Assume $\mathcal{H}(\cdot)$ convex};
\node (c-AIR-MM1) at ($(AIR)!0.5!(MM1)$) {};
\node (c-AIR-MM2) at ($(AIR)!0.5!(MM2)$) {};
\node (c-AIR-MM) at ($(c-AIR-MM1)!0.5!(c-AIR-MM2)$) {};
\node [right of = c-AIR-MM] (ro-c-AIR-MM) {};
\node [right of = ro-c-AIR-MM] (specific) {Specific models};
\node (c-MM1-DB) at ($(MM1)!0.5!(DB)$) {};
\node (c-MM2-ScSep) at ($(MM2)!0.5!(ScSep)$) {};
\node (c-MM-SpCase) at ($(c-MM1-DB)!0.5!(c-MM2-ScSep)$) {};
\node [right of = c-MM-SpCase] (ro-c-MM-SpCase) {};
\node [right of = ro-c-MM-SpCase] (special) {Special cases};
\path [line] (LDP_SMP) -- (AIR);
\path [line] (AIR) -- (MM1);
\path [line] (AIR) -- (MM2);
\path [line] (MM1) -- (DB);
\path [line] (MM2) -- (ScSep);
\path [horline] (convex) -- (c-LDP-AIR);
\path [horline,shorten >=.2cm] ([yshift=.1cm]specific.west) -- ([yshift=.1cm]c-AIR-MM1.east);
\path [horline,shorten >= .2cm] ([yshift=-.1cm]specific.west) -- ([yshift=-.1cm]c-AIR-MM2.east);
\path [horline,shorten >=.2cm] ([yshift=.1cm]special.west) -- ([yshift=.1cm]c-MM1-DB.east);
\path [horline,shorten >= .2cm] ([yshift=-.1cm]special.west) -- ([yshift=-.1cm]c-MM2-ScSep.east);
\end{tikzpicture}
\caption{Overview of the results proven in this paper. 
From top to bottom, results become less general and more specific. Arrows indicate restrictions in passing from one context to the next.}
\label{fig:flow-diagram}
\end{figure}
\section{Example---large deviations for molecular motors}
\label{sec:model-of-molecular-motor}
\subsection{Definition of the system}
In this example, we consider a two-component Markov process~$(X^n,I^n)$
with values in $\mathbb{T} \times \{1,2\}$, where~$\mathbb{T} = \mathbb{R} / \mathbb{Z}$ is the one-dimensional flat torus. 
We fix the initial condition $\left(X^n(0),I^n(0)\right) =(x_0,i_0)$ for some~$(x_0,i_0) \in \mathbb{T} \times \{1,2\}$. Let~$\psi(\cdot,1)$ and~$\psi(\cdot,2)$ be smooth functions on the torus, and we write~$\psi'(x,i)$ for the derivative of~$x\mapsto \psi(x,i)$. We call these functions \emph{potentials}. The evolution of~$(X^n, I^n)$ is characterized by the stochastic differential equation
	\begin{equation}\label{eq:intro:example_SDE}
	\dd X_t^n = -\psi'\left(nX_t^n,I_t^n\right)\,\dd t + \frac{1}{\sqrt{n}}\,\dd B_t,
	\end{equation}
	where $B_t$ is a standard Brownian motion. The process~$I^n$ is a continuous-time Markov chain on~$\{1,2\}$, which evolves with \emph{jump rates}~$r_{ij}(\cdot)$ such that
	\begin{equation}
	\mathbb{P}
	\Bigl(I^n({t+\Delta t})
	=
		j\,|\,I^n(t) =i, X^n(t) =x
		\Bigr)
		=
		n \cdot r_{ij}\left(nx\right)\Delta t
	+
		\mathcal{O}(\Delta t^2),
	\quad
		\text{as}\;\Delta t \to 0.
	\label{eq:intro:example_evolution_jump_rates}
	\end{equation}
	In summary, the \emph{spatial component}~$X^n$ is a drift-diffusion process, the \emph{configurational component}~$I^n$ is a continuous-time Markov chain on~$\{1,2\}$, and the two are coupled through their respective rates. 
	For details about the rigorous construction of such switching drift-diffusion processes, we refer to~\cite[Chapter~2]{yin2010hybrid}.
	Figure~\ref{MM:fig:evolution-and-tx-diagram} depicts a typical realization of~$(X^n, I^n)$, where the trajectory of the spatial component is lifted from the torus to~$\mathbb{R}$.
	\begin{figure}[ht]
		\labellist
		\pinlabel $t$ at 1800 90
		\pinlabel  $x$ at 760 90
		\pinlabel  $\psi(x,1)$ at -50 230
		\pinlabel  $\psi(x,2)$ at -50 500
		\pinlabel  \small $1.$ at 290 130
		\pinlabel  \small $2.$ at 310 360
		\pinlabel  \small $3.$ at 410 540
		\pinlabel  \small $4.$ at 490 520
		\pinlabel  \small $5.$ at 570 400
		\pinlabel  \small $6.$ at 570 290
		\pinlabel  \small $1.$ at 1300 80
		\pinlabel  \small $2.$ at 1190 150
		\pinlabel  \small $3.$ at 1260 200
		\pinlabel  \small $4.$ at 1100 430 
		\pinlabel  \small $5.$ at 1510 285
		\pinlabel  \small $6.$ at 1330 670
		\pinlabel {\color{red_one}{$X^{n}(t)$}} at 1000 640
		\endlabellist
		\centering
		\includegraphics[scale=.18]{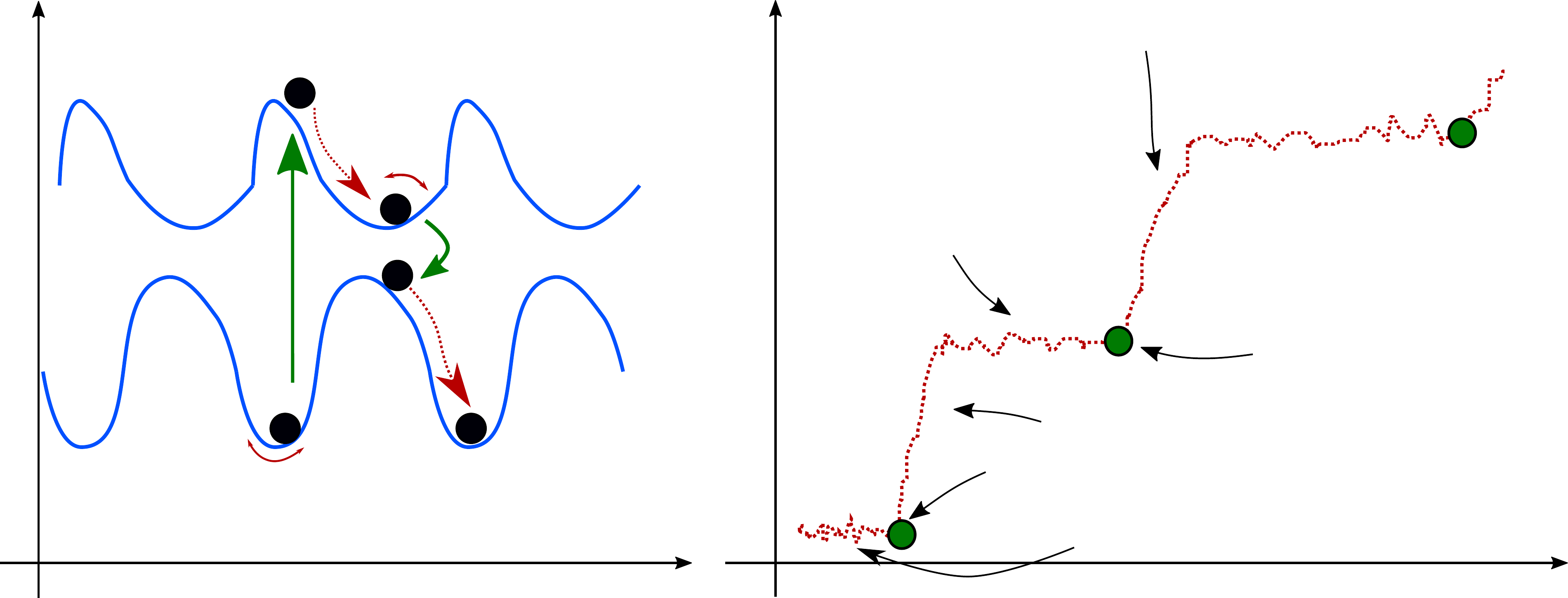}
		\caption{A typical time evolution of $(X^n, I^n)$ satisfying~\eqref{eq:intro:example_SDE} and~\eqref{eq:intro:example_evolution_jump_rates}. In the left diagram, the black bullet represents a particle that moves according to~\eqref{eq:intro:example_SDE}. A red arrow indicates the dynamics of the spatial component~$X^n$. A green arrow indicates a switch of the configurational component~$I^n$, which switches the potential in which the particle is diffusing. In the right diagram, the spatial evolution is shown in an $x$-$t$-diagram. The red dots represent the values of~$X^n$, while a green bullet indicates a switch of the configurational component~$I^n$. The dynamics of the particle comprises the following typical phases. 1 and 4:  diffusive motion of~$X^n$ near a potential minimum; 2 and 5: configurational switch of~$I^n$ with the effect of switching to another potential; 3 and 6: flow of~$X^n$ towards a minimum of the other potential. In both diagrams, the spatial trajectory is shown lifted from the torus~$\mathbb{T}$ to~$\mathbb{R}$.}
		\label{MM:fig:evolution-and-tx-diagram}
	\end{figure}
\smallskip

The specific $n$-scaling may be motivated by starting from a process~$(X_t, I_t)$ that satisfies
\begin{align*}
\dd X_t&= -\psi'(X_t,I_t)\,\dd t + \dd B_t,
\end{align*}
where the jump process $I_t$ on $\{1,2\}$ evolves according to
\begin{equation*}
\mathbb{P}\left(I_{t+\Delta t}=j\,|\,I_t=i, X_t=x\right)=r_{ij}(x)\Delta t+\mathcal{O}(\Delta t^2), \quad \text{as}\;\Delta t \to 0.
\end{equation*}
The \emph{large-scale} behaviour of $(X_t, I_t)$ is studied by considering the rescaled process~$(X^n_t, I^n_t)$ defined by~$X^n_t := \frac{1}{n} X_{n t}$ and~$I^n_t := I_{nt}$, and characterizing the dynamics of~$(X^n_t, I^n_t)$ for large values of~$n$.
This rescaling may be interpreted as zooming out of the $x$-$t$ phase space, which is illustrated below in Figure~\ref{MM:fig:zooming-out}. It\^o calculus implies that the process $(X^n_t, I^n_t)$ satisfies~\eqref{eq:intro:example_SDE} and~\eqref{eq:intro:example_evolution_jump_rates}.
\begin{figure}[ht]
	\labellist
	\pinlabel $t$ at 1000 50
	\pinlabel $x$ at 50 800
	\pinlabel $t$ at 2125 50
	\pinlabel $x$ at 1170 800
	\pinlabel $v=\mathcal{H}'(0)$ at 1850 280
	\pinlabel $n=1$ at 500 800
	\pinlabel $n\gg 1$ at 1600 800 
	\pinlabel {\color{red_one}{$X^{n}(t)$}} at 400 600
	\pinlabel {\color{red_one}{$X^n(t)$}} at 1500 600
	\endlabellist
	\centering
	\includegraphics[scale=.17]{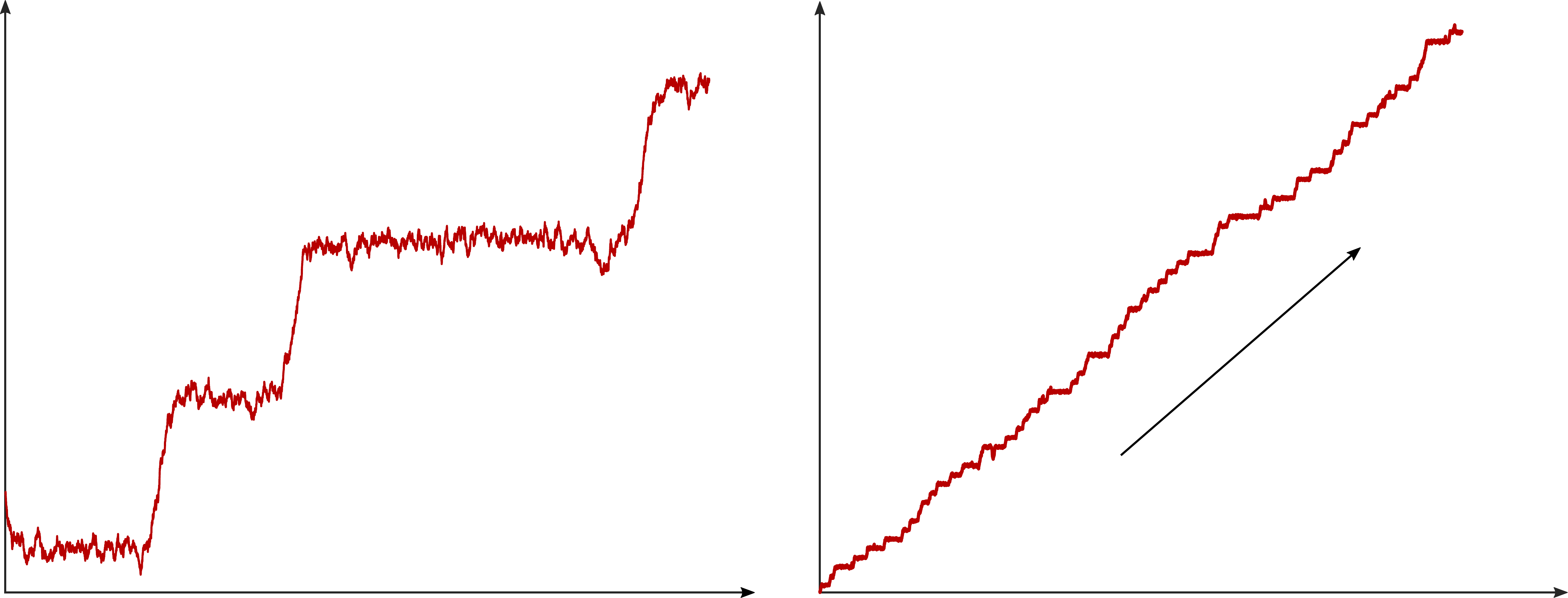}
	\caption{Two typical realizations of the spatial component~$X^n$ of the two-component process~$(X^n,I^n)$ satisfying~\eqref{eq:intro:example_SDE} and~\eqref{eq:intro:example_evolution_jump_rates}. On the left, a realization is depicted for~$n$ of order one, and on the right for large~$n$. Both graphs depict the lifted trajectory of~$X^n$ on~$\mathbb{R}$. For large~$n$, realizations of~$X^n$ closely follow a path with a constant velocity~$v=\mathcal{H}'(0)$, wherein the \emph{Hamiltonian}~$\mathcal{H}=\mathcal{H}(p)$ may be derived from large-deviation theory. A more detailed illustration of the dynamics is shown in Figure~\ref{MM:fig:evolution-and-tx-diagram} further above.}
	\label{MM:fig:zooming-out}
\end{figure}
\subsection{Large deviations for this example}
We are interested in the behaviour of the spatial component~$X^n$ as $n\to\infty$. The behaviour of~$X^n$ for large~$n$ is shown in Figure~\ref{MM:fig:zooming-out}. This figure suggests that~$X^n$ closely follows a path with a constant velocity. Indeed, when specifying the results of this paper to the example at hand---the process $(X^n, I^n)$ defined by~\eqref{eq:intro:example_SDE} and~\eqref{eq:intro:example_evolution_jump_rates}---we find that the spatial component~$X^n$ satisfies a pathwise large-deviation principle in the limit~$n\to\infty$. 
\smallskip

To describe this fact more precisely, let~$\mathcal{X}:=C_\mathbb{T}[0,\infty)$ the set of continuous trajectories in~$\mathbb{T}$, equipped with the topology of uniform convergence on compact time intervals. The spatial component~$X^n$ is a random variable in~$\mathcal{X}$, with a path distribution~$\mathbb{P}(X^n\in\cdot)\in\mathcal{P}(\mathcal{X})$. We will show that there exists a \emph{rate function}~$\mathcal{I}:\mathcal{X}\to[0,\infty]$ with which~$\{X^n\}_{n\in\mathbb{N}}$ satisfies a pathwise large-deviation principle in the sense of Definition~\ref{def:LDP} below. The gist of this statement is that for any trajectory~$x\in\mathcal{X}$, we have at least intuitively
\begin{equation}\label{MM:eq:intro:example-LDP}
\mathbb{P}\left(X^n\approx x\right) \sim e^{-n \, \mathcal{I}(x)},\quad n\to\infty.
\end{equation}
The notation~``$X^n\approx x$'' indicates that~$X^n$ is close to~$x$ with respect to the topology on~$\mathcal{X}$, and ``$\sim e^{-n \, \mathcal{I}(x)}$'' indicates a dominant contribution of the exponential.
The rate function~$\mathcal{I}$ is given by means of a \emph{Lagrangian}~$\mathcal{L}:\mathbb{R}\to[0,\infty)$ as
\begin{equation}\label{MM:eq:intro:example-LDP-RF}
\mathcal{I}(x)=\mathcal{I}_0(x(0)) + \int_0^\infty\mathcal{L}(\partial_t x(t))\,\dd t.
\end{equation}
Here~$\mathcal{I}_0:\mathbb{T}\to[0,\infty]$ is the rate function of the initial conditions~$X^n(0)$; because of the deterministic initial condition~$X^n(0)=x_0$, this functional is given by~$\mathcal{I}_0(x_0)=0$ and~$+\infty$ otherwise. The Lagrangian is the Legendre dual of a \emph{Hamiltonian}~$\mathcal{H}:\mathbb{R}\to\mathbb{R}$, that is~$\mathcal{L}(v)=\sup_p [pv-\mathcal{H}(p)]$, and the Hamiltonian is the principal eigenvalue of an associated cell problem described in a more general context in Lemma~\ref{lemma:LDP_MM:contI:principal_eigenvalue}. 
\smallskip

Here, we focus on how this large-deviation result confirms the claim suggested by Figure~\ref{MM:fig:zooming-out}. The rate function~\eqref{MM:eq:intro:example-LDP-RF} has the following properties:
	\begin{enumerate}[(i)]
	\item \label{item:i:MM-example} $\mathcal{I} : \mathcal{X} \rightarrow [0,\infty]$ is nonnegative.
	\item \label{item:ii:MM-example} $\mathcal{I}(x) = 0$ if and only if~$\partial_tx(t)=v$, with~$v=\mathcal{H}'(0)$.
	\end{enumerate}
These two properties together characterize the unique minimizer of the rate function, and thereby in particular the typical behaviour of~$X^n$ for large~$n$. Whenever~$\mathcal{I}(x) > 0$ for a path~$x\in\mathcal{X}$, then by~\eqref{MM:eq:intro:example-LDP}, the probability that a realization of~$X^n$ is close to~$x$  on~$\mathcal{X}$ is exponentially small in~$n$. In fact, the large-deviation principle implies almost-sure convergence of~$X^n$ to the unique minimizer of the rate function (Theorem~\ref{thm:LDP-implies-as}). Uniquenss of the minimizer, Item~\ref{item:ii:MM-example}, follows by strict convexity of~$\mathcal{H}(p)$. For the Hamiltonian of this example, strict convexity can be proven as demonstrated in~\cite[Step~4 in Appendix~A]{Mirrahimi2013}.
\smallskip

With the large-deviation principle we can investigate which sets of potentials and rates~$\{\psi_1,\psi_2,r_{12},r_{21}\}$ induce \emph{transport}, that means a non-zero macroscopic velocity~$v=\mathcal{H}'(0)$. We do not find general sufficient conditions for transport, but can draw some conclusions if the process~$(X^n,I^n)$ satisfies \emph{detailed balance}, that is~$r_{12}e^{-\psi_1} = C r_{21}e^{-\psi_2}$ for some constant~$C>0$. Detailed balance implies that the Hamiltonian is symmetric (Theorem~\ref{thm:results:detailed_balance_limit_I}), and therefore~$v=0$ under detailed balance.

\section{Preliminaries}
\label{sec:preliminaries}
In the previous section we sketched the  results of this paper at the hand of an example. In this section we introduce the concepts that we use in the subsequent sections to obtain the general results of this paper in a rigorous way.
\medskip

\emph{Large deviations.} For a Polish space~$E$, let~$\mathcal{X}:=D_{E}[0,\infty)$ be the set of trajectories in~$E$ that are right-continuous and have left limits. We equip~$\mathcal{X}$ with the Skorohod topology~\cite[Section~3.5]{EthierKurtz1986}. We work with the definition of a rate function as given in~\cite[Chapter~1]{BudhirajaDupuis2019}.
\begin{definition}[Rate function]
\label{def:rate-function}
	We call a map~$\mathcal{I}:\mathcal{X}\to[0,\infty]$ a \emph{rate function} if for every~$C\geq 0$, the sub-level set~$\{x\in \mathcal{X}\,:\,\mathcal{I}(x)\leq C\}$ is compact.
	\qed
\end{definition}
In particular, a rate function is lower semi-continuous. For a Borel subset~$A\subseteq \mathcal{X}$, we write~$\mathrm{int}(A)$ and~$\mathrm{clos}(A)$ for its interior and closure.
\begin{definition}[Large-deviation principle]
\label{def:LDP}
	For~$n=1,2,\dots$, let~$\PP_n$ be a probability measure on~$\mathcal{X}$, and let~$\mathcal{I}:\mathcal{X}\to[0,\infty]$ be a rate function. We say that the sequence~$\{\PP_n\}_{n\in\mathbb{N}}$ satisfies a \emph{large-deviation principle with rate function~$\mathcal{I}$} if for every Borel subset~$A\subseteq \mathcal{X}$, 
	\begin{align*}
	-\inf_{x\in\mathrm{int}(A)} \mathcal{I}(x) \leq \liminf_{n\to\infty}\frac{1}{n}\log\PP_n(A) \leq \limsup_{n\to\infty}\frac{1}{n}\log\PP_n(A) \leq -\inf_{x\in\mathrm{clos}(A)}\mathcal{I}(x)
	\tag*\qed
	\end{align*}
\end{definition}
A large-deviation principle provides an estimate of the probabilities~$\PP_n(A)$ on the logarithmic scale. At least intuitively,
\begin{equation*}
\PP_n(A) \approx e^{-n\, \inf_{x\in A}\mathcal{I}(x)},\qquad n\to \infty.
\end{equation*}
Illustrating examples of a large-deviation principle can be found for instance in Ellis' note on Boltzmann's discoveries~\cite{Ellis1999}. General introductions to the topic are also provided in~\cite[Chapter~1]{BudhirajaDupuis2019} and~\cite[Chapter~3]{FengKurtz2006}.

\medskip
Identifying tractable formulas for a rate function is crucial for drawing conclusions from a large-deviation principle. In this paper, we shall aim for finding action-integral representations of rate functions. Let~$\mathbb{T}^d := \mathbb{R}^d / \mathbb{Z}^d$ be the flat~$d$-dimensional torus, and let~$\cA\cC([0,\infty);\mathbb{T}^d)$ be the set of absolutely continuous trajectories in$~\mathbb{T}^d$.
\begin{definition}[Action-integral form of rate function]
\label{def:action-integral-rep-of-rate-function}
We say that a rate function~$\mathcal{I}:D_{\mathbb{T}^d}[0,\infty)\to[0,\infty]$ is of action-integral form if there is a non-trivial convex map~$\mathcal{L}:\mathbb{R}^d\to[0,\infty]$ with which
\begin{equation*}
\mathcal{I}(x) =
\begin{cases}
\mathcal{I}_0(x(0))+
\int_0^\infty\mathcal{L}
\left(\partial_t x(t)\right) \, \dd t 
&\quad 
\text{if } x\in \cA\cC([0,\infty); \mathbb{T}^d),\\
+\infty & 
\quad
\text{otherwise},
\end{cases}
\end{equation*}
where~$\mathcal{I}_0:\mathbb{T}^d\to[0,\infty]$ is a rate function. We refer to the map~$\mathcal{L}$ as the \emph{Lagrangian}.\qed
\end{definition}

\medskip
\noindent
\emph{Switching Markov processes in a periodic setting.}
We shall consider Markov processes defined by two-component stochastic processes~$(X^n, I^n)$ taking values in state spaces~$E_n$ that satisfy the following condition. 
\begin{condition}[Setting]
	Fix~$J\in\mathbb{N}$. For~$n\in\mathbb{N}$, the state space~$E_n$ is a product space~$E_n := E_n^X \times \{1, \dots, J\}$, where~$E_n^X$ be a compact Polish space satisfying the following: there are continuous maps~$\iota_n : E_n^X \rightarrow \mathbb{T}^d$ such that for all~$x \in \mathbb{T}^d$ there exists~$x_n \in E_n^X$ with which~$\iota_n(x_n) \rightarrow x$ as~$n \rightarrow \infty$.\qed
	\label{condition:results:general_setting}
\end{condition}
This condition means that the~$E_n^X$ are asymptotically dense in the torus~$\mathbb{T}^d$. The typical example is the periodic lattice $(n^{-1}\bZ)^d/\bZ^d$, where the torus is recovered in the limit of~$n$ to infinity. Another example is simply~$E_n^X\equiv \mathbb{T}^d$. When it is clear from the context, we omit $\iota_n$ in the notation.

\medskip
Let~$\mathcal{X}_n:=D_{E_n}[0,\infty)$. For a distribution~$\mu\in\mathcal{P}(E_n)$, we define an~$E_n$-valued two-component process~$(X^n, I^n)$ with initial condition~$\mu$ by defining its path distribution~$\mathbb{P}_\mu^n\in\mathcal{P}(\mathcal{X}_n)$. In order to define a path distribution, we shall specify a linear map~$L_n:\mathcal{D}(L_n) \subseteq C(E_n) \rightarrow C(E_n)$ on a domain~$\mathcal{D}(L_n)$ and assume well-posedness of the martingale problem of the pair~$(L_n,\mu)$; we refer to~\cite[Section~4.3]{EthierKurtz1986} for a precise treatment of the martingale problem. We call a linear map~$L_n$ as above a \emph{generator} if it gives rise to a well-posed martingale problem.
We specify the generators of~$(X^n,I^n)$ from the following ingredients:
\begin{enumerate}[(1)]
	\item For~$i\in\{1,\dots,J\}$, we have a map~$L_n^{i}:\mathcal{D}(L_n^{i}) \subseteq C(E_n^X) \rightarrow C(E_n^X)$ that is the generator of an~$E_n^X$-valued Markov process. 
	\item For~$i,j\in\{1,\dots,J\}$, we have a continuous map~$r_{ij}^n:E_n^X\to[0,\infty)$.
\end{enumerate}
With that, define the map~$L_n:\mathcal{D}(L_n) \subseteq C(E_n) \rightarrow C(E_n)$ by
\begin{equation}
L_n f(x,i) := L_n^{i} f(\cdot,i) (x) + \sum_{j = 1}^J r_{ij}^n(x) \left[ f(x,j) - f(x,i) \right],
\label{eq:intro:MP_with_switching_L_varepsilon}
\end{equation}
where the domain is~$\mathcal{D}(L_n) = \{ f \in C(E_n) \, : \, f(\cdot,i) \in \mathcal{D}(L_n^{i}), i = 1, \dots, J \}$.
\begin{condition}[Well-posedness]
	Let~$\mu \in \mathcal{P}(E_n)$. Existence and uniqueness holds for the $\mathcal{X}_n$ martingale problem for $(L_n,\mu)$. Denote the solution to the martingale-problem solution of $L_n$ by~$\mathbb{P}_\mu^n$. The map~$E_n \ni z \mapsto \mathbb{P}_{\delta_z}^n \in \mathcal{P}(\mathcal{X}_n)$ is Borel measurable with respect to the weak topology on $\mathcal{P}(\mathcal{X}_n)$.\qed
	\label{condition:intro:sol_martinagle_problem}
\end{condition}
Condition~\ref{condition:intro:sol_martinagle_problem} is the basic assumption on the processes in~\cite{FengKurtz2006}. A sufficient condition for the measurability in there is given in~\cite[Theorem~4.4.6]{EthierKurtz1986}. We do not give general conditions on a map~$L_n$ that imply Condition~\ref{condition:intro:sol_martinagle_problem}. For examples regarding existence and regularity properties we refer to the book of Yin and Zhu about switching hybrid diffusions~\cite[Part~I]{yin2010hybrid}.
\begin{definition}[Switching Markov processes in a periodic setting]
	Let~$(X^n,I^n)$ be a two-component Markov proces taking values in~$E_n = E^X_n \times \{1,\dots,J\}$ satisfying Condition~\ref{condition:results:general_setting}. We call~$(X^n,I^n)$ a \emph{switching Markov process} if its generator~$L_n$ is given by~\eqref{eq:intro:MP_with_switching_L_varepsilon} and satisfies Condition~\ref{condition:intro:sol_martinagle_problem}.\qed
	\label{def:intro:MP_with_switching}
\end{definition}
\section{Main results}
\label{sec:main-results}
In the previous section we introduced the notion of a large-deviation principle and defined switching Markov processes in a periodic setting. In this section we present our main results as depicted in the flow-diagram Figure~\ref{fig:flow-diagram} above. First, we formulate general conditions for a large-deviation principle of switching Markov processes (Theorem~\ref{thm:results:LDP_switching_MP}). Then we find an action-integral representation of the rate function under an additional convexity assumption (Theorem~\ref{thm:results:action_integral_representation}). The remaining theorems arise from specifications of the general setting to specific models. We prove large-deviation principles for molecular-motor models in two limit regimes (Theorems~\ref{thm:results:LDP_cont_MM_Limit_I} and~\ref{thm:results:LDP_cont_MM_Limit_II}), and derive the fact that detailed balance and separation of scales imply symmetry of Hamiltonians (Theorems~\ref{thm:results:detailed_balance_limit_I} and~\ref{thm:results:symmetry_limit_II}).
\subsection{Large-deviation principle for switching Markov processes}  
We consider switching Markov processes~$(X^n,I^n)$ in a periodic setting in the sense of Definition~\ref{def:intro:MP_with_switching}, with generators of the form~\eqref{eq:intro:MP_with_switching_L_varepsilon}. The essence of this section is Theorem~\ref{thm:results:LDP_switching_MP}, which provides general conditions that imply a pathwise large-deviation principle of the spatial component~$X^n$. We state the conditions in terms of \emph{nonlinear generators} defined as follows.
\begin{definition}[Nonlinear generators]
	\label{MM:def:nonlinear-generators-switching-MP}
	Let~$L_n$ be the map defined by~\eqref{eq:intro:MP_with_switching_L_varepsilon}. The~\emph{nonlinear generator} is the map~$H_n : \mathcal{D}(H_n) \subseteq C(E_n) \rightarrow C(E_n)$ defined by
	\begin{equation}
	H_n f(x) := \frac{1}{n} \, e^{-n f(x)} L_n (e^{n f(\cdot)})(x),
	\label{eq:results:H_varepsilon}
	\end{equation}
	on the domain~$\mathcal{D}(H_n) := \{f \in C(E_n) \, : \, e^{n f(\cdot)} \in \mathcal{D}(L_n)\}$.\qed
\end{definition}
We shall work under the assumption that the nonlinear generators~$H_n$ converge in the limit~$n\to \infty$. To formulate this convergence assumption, we need to introduce an additional state space~$E'$ for collecting up-scaled variables. The following diagram depicts the relation between the state spaces:
\begin{equation*}
\begin{tikzcd}[row sep = tiny]
&	\mathbb{T}^d \times E^\prime \arrow[dd, "\mathrm{proj}_1"]	\\
E_n \arrow[ur, "(\eta_n {,} \eta_n^\prime)"] \arrow[dr, "\eta_n"']	&	\\
&	\mathbb{T}^d	
\end{tikzcd}
\end{equation*}
In the diagram,~$\eta_n : E_n \to \mathbb{T}^d$ is the projection defined by~$\eta_n (x,i) := \iota_n(x)$, where~$\iota_n:E_n^X\to\mathbb{T}^d$ is the embedding of Condition~\ref{condition:results:general_setting}. The map~$\eta_n^\prime : E_n \to E^\prime$ is assumed to be continuous. We shall assume that the~$E_n$ are asymptotically dense:
\begin{enumerate}[(C1)]
	\item \label{MM:item:C1} For~$(x,z') \in \mathbb{T}^d \times E'$ there exists $y_n\in E_n$ such that $\eta_n(y_n) \to x$ and $\eta_n'(y_n) \to z^\prime$ as $n \to \infty$.
\end{enumerate}
A limit operator of~$H_n$ is defined by a graph~$H\subseteq C(\mathbb{T}^d)\times C(\mathbb{T}^d\times E')$, a multi-valued operator. We shall assume the following convergence condition:
\begin{enumerate}[(C2)]
	\item \label{MM:item:C2} The domain~$\mathcal{D}(H)$ satisfies~$C^\infty(\mathbb{T}^d) \subseteq \mathcal{D}(H) \subseteq C^1(\mathbb{T}^d)$. For~$(f,g)\in H$, there exist functions~$f_n\in\mathcal{D}(H_n)$,~$n\in\mathbb{N}$, such that as~$n\to\infty$,
	\[
	\| f\circ\eta_n - f_n \|_{L^\infty(E_n)} \rightarrow 0
	\quad
	\text{ and }
	\quad
	\| g\circ (\eta_n,\eta_n') - H_n f_n \|_{L^\infty(E_n)} \rightarrow 0.
	\]
\end{enumerate}
Frequently, for any~$f$ in the domain of~$H$, the corresponding image functions~$g$ are naturally parametrized by a set of functions on~$E'$: 
\begin{enumerate}[(C3)]
	\item \label{MM:item:C3} There are a set~$\mathcal{C}\subseteq C(E')$ and functions~$H_{f,\varphi}\in C(\mathbb{T}^d\times E')$ with which
	\begin{equation*}
	H = \left\{\left(f,H_{f,\varphi}\right)\,:\,f\in\mathcal{D}(H),\varphi\in\mathcal{C}\right\}.
	\end{equation*}
\end{enumerate}
\begin{theorem}[Large deviation principle for switching processes]
	\label{thm:results:LDP_switching_MP}
	Let~$(X^n,I^n)$ be a switching Markov process in the sense of Definition~\ref{def:intro:MP_with_switching}, with nonlinear generators~$H_n$ of Definition~\ref{MM:def:nonlinear-generators-switching-MP}. Let~$E^\prime$ be a compact metric space satisfying~\ref{MM:item:C1}, and let~$H\subseteq C(\mathbb{T}^d)\times C(\mathbb{T}^d\times E')$ be a multivalued operator satisfying ~\ref{MM:item:C2} and~\ref{MM:item:C3} from above. Suppose the following: 
	\begin{enumerate}[(T1)]
		\item \label{MM:item:T1}
		For every $\varphi\in\mathcal{C}$ there is a map~$H_\varphi : \mathbb{R}^d \times E^\prime \to \mathbb{R}$
		such that for all~$f \in \mathcal{D}(H)$,
		\begin{equation*}
		H_{f,\varphi}(x,z^\prime) = H_\varphi(\nabla f(x),z^\prime),\qquad (x,z') \in \mathbb{T}^d\times E'.
		\end{equation*}
		\item \label{MM:item:T2}
		For every $p \in \mathbb{R}^d$, there exists a function $\varphi_p \in \mathcal{C}$ and a constant $\mathcal{H}(p) \in \mathbb{R}$ such that~$H_{\varphi_p}(p,z^\prime)=\mathcal{H}(p)$
		for all $z^\prime \in E^\prime$. 
	\end{enumerate}
	Suppose furthermore that~$\{X^n(0)\}_{n\in\mathbb{N}}$ satisfies a large-deviation principle in~$\mathbb{T}^d$ with rate function~$\mathcal{I}_0 : \mathbb{T}^d \rightarrow [0,\infty]$.
	Then the family of processes $\{X^n\}_{n\in\mathbb{N}}$ satisfies a large-deviation principle in $D_{\mathbb{T}^d}[0,\infty)$ with a rate function $\mathcal{I} :D_{\mathbb{T}^d}[0,\infty)\rightarrow[0,\infty]$, and there exists a semigroup~$V(t)$ with which the rate function is given by~\eqref{eq:rate-function:FengKurtz}.
\end{theorem}
We give the proof in Section~\ref{sec:proof-of-LDP}.
The formula for the rate function~$\mathcal{I}$ is not important here, which is why we report it only  below in~\eqref{eq:rate-function:FengKurtz} in the proof section. Condition~\ref{MM:item:T1} means that the images depend on the variable~$x\in\mathbb{T}^d$ only via the gradients~$\nabla f(x)$. In the molecular-motor models, Condition~\ref{MM:item:T2} is verified by solving a principal-eigenvalue problem, in which the constant~$\mathcal{H}(p)$ is the unique principal eigenvalue of a certain cell problem. 
\subsection{Action-integral representation of the rate function}
In the previous section, we formulated general conditions that imply a pathwise large-deviation principle. The rate function of Theorem~\ref{thm:results:LDP_switching_MP} however is still generic (equation~\eqref{eq:rate-function:FengKurtz} below). The following Theorem shows that under an additional convexity assumption, the rate function is of action-integral form in the sense of Definition~\ref{def:action-integral-rep-of-rate-function} above.
\begin{theorem} 
\label{thm:results:action_integral_representation}
Consider the setting of Theorem~\ref{thm:results:LDP_switching_MP}. For~$p\in\mathbb{R}^d$, let~$\mathcal{H}(p)$ be the constant in~\ref{MM:item:T2} of Theorem \ref{thm:results:LDP_switching_MP}.
	Suppose further the following:
	\begin{itemize}
		\item[(T3)] The map $p \mapsto \mathcal{H}(p)$ is convex and $\mathcal{H}(0) = 0$.
	\end{itemize}
	Then the rate function of Theorem~\ref{thm:results:LDP_switching_MP} is of action-integral form with the Lagrangian defined by~$\mathcal{L}(v)=\sup_{p\in\mathbb{R}^d}\left[p\cdot v - \mathcal{H}(p)\right]$.
\end{theorem}
Theorem~\ref{thm:results:action_integral_representation} is proven in Section~\ref{section:action_integral}.
\subsection{Large deviations for models of molecular motors}
In the previous two sections we considered general switching Markov processes in a periodic setting. In this section we further specify to a class of stochastic processes motivated by molecular motors. 
\begin{definition}[Process modeling molecular motors]
	The pair~$(X^n, I^n)$ is a Markov process with values in~$E_n = \mathbb{T}^d \times \{1,\dots,J\}$ with generator~$L_n$ acting on functions~$f=f(x,i)$ as
	\begin{multline}
	L_n f(x,i) := b^i(nx) \cdot \nabla_x f(\cdot,i) (x) + \frac{1}{n} \frac{1}{2} \Delta_x f(\cdot,i) (x)\\
	+ \sum_{j\neq i} \gamma(n) r_{ij}(nx)
	\left[
	f(x,j) - f(x,i)
	\right],
	\label{eq:intro:L_varepsilon_cont_MM}
	\end{multline}
	where~$\gamma(n)>0$,~$r_{ij}(\cdot) \in C^\infty(\mathbb{T}^d; [0,\infty))$, and~$b^i(\cdot) \in C^\infty(\mathbb{T}^d)$. \qed
	\label{def:intro:cont_MM}
\end{definition}
This is an example of a switching Markov process with generators~$L_n^{i}$ defined on the core~$C^2(\mathbb{T}^d)$ by
\begin{equation*}
L^{i}_n g(x)
:=
b^i(nx) \cdot \nabla g(x)
+
\frac{1}{n} \,\frac{1}{2} \Delta g(x),
\end{equation*}
and rates~$r_{ij}^n(x)= \gamma(n)r_{ij}(nx)$. The domain~$\mathcal{D}(L_n^i)$ of the generators~$L_n^i$ contains the core, but is larger than~$C^2(\mathbb{T}^d)$.
The domain of~$L_n$ is the set given by~$\mathcal{D}(L_n) = \{ f(x,i)\,:\, f(\cdot,i) \in \mathcal{D}(L_n^{i})\}$, and for functions~$f$ such that~$f(\cdot,i)\in C^2(\mathbb{T}^d)$, the generator acts as defined in~\eqref{eq:intro:L_varepsilon_cont_MM}. The example of Section~\ref{sec:model-of-molecular-motor}, a stochastic model of molecular motors, corresponds to the choices~$d=1$, $b^i=-\psi'(\cdot,i)$, $J=2$ and~$\gamma(n)=n$.
\begin{definition}\label{MM:def:irreducible-matrix}
	Let~$J\in\mathbb{N}$. We call a matrix~$A\in\mathbb{R}^{J\times J}$ \emph{irreducible} if there is no decomposition of~$\{1,\dots,J\}$ into two disjoint sets $\mathcal{J}_1$ and $\mathcal{J}_2$ such that $A_{ij} = 0$ whenever $i \in \mathcal{J}_1$ and $j \in \mathcal{J}_2$.\qed
\end{definition}
\begin{theorem}[Limit I]
\label{thm:results:LDP_cont_MM_Limit_I}
	Let~$(X^n_t, I^n_t)$ be the Markov process of Definition~\ref{def:intro:cont_MM} with parameter~$\gamma(n)=n$. Assume that the matrix~$R$ with entries $R_{ij} = \sup_{y \in \mathbb{T}^d}r_{ij}(y)$ is irreducible. Suppose furthermore that
	the family of initial conditions~$\{X^n(0)\}_{n\in\mathbb{N}}$ satisfies a large-deviation principle in $\mathbb{T}^d$ with rate function $\mathcal{I}_0 :\mathbb{T}^d \rightarrow [0,\infty]$.
	\medskip
	
	Then the family of stochastic processes~$\{X^n\}_{n\in\mathbb{N}}$ satisfies a large-deviation principle in $C_{\mathbb{T}^d}[0,\infty)$ with rate function of action-integral form.
	The Hamiltonian~$\mathcal{H}(p)$ is the principal eigenvalue of an associated cell problem described in Lemma~\ref{lemma:LDP_MM:contI:principal_eigenvalue}.
\end{theorem}
The irreducibility condition is imposed to solve the principal-eigenvalue problem that we obtain, and is inspired by sufficient conditions for solvability of a coupled system of elliptic PDEs~\cite{Sweers92}. 
\medskip

The parameter~$\gamma(n)$ allows to model a time-scale separation of the components. The following theorem shows that if~$\gamma(n)$ scales super-linearly, then the spatial component is effectively driven by potentials averaged over the stationary measure of the fast configurational component, and the large-deviation principle is governed by an averaged Hamiltonian.
\begin{theorem}[Limit II]
	Let $(X^n_t, I^n_t)$ be the Markov process of Definition~\ref{def:intro:cont_MM}, with parameter~$\gamma(n)$ such that~$n^{-1}\gamma(n) \rightarrow \infty$ as~$n \rightarrow \infty$. Assume that for every~$y\in\mathbb{T}^d$, the matrix~$R(y)$ with entries~$R(y)_{ij}=r_{ij}(y)$ is irreducible.
	Suppose furthermore that the family of random variables~$\{X^n(0)\}_{n \in \mathbb{N}}$ satisfies a large-deviation principle in~$\mathbb{T}^d$ with rate function~$\mathcal{I}_0 :\mathbb{T}^d \rightarrow [0,\infty]$. 
	\smallskip
	
	Then~$\{X^n\}_{n \in \mathbb{N}}$ satisfies a large-deviation principle in~$C_{\mathbb{T}^d}[0,\infty)$ with rate function of action-integral form.
	The Hamiltonian $\overline{\mathcal{H}}(p)$ is the principal eigenvalue of an associated averaged cell problem described in Lemma~\ref{lemma:LDP_MM:contII:principal_eigenvalue}.
	\label{thm:results:LDP_cont_MM_Limit_II}
\end{theorem}
\subsection{Detailed balance implies symmetric Hamiltonians}
The large-deviation principles established by Theorems~\ref{thm:results:LDP_cont_MM_Limit_I} and~\ref{thm:results:LDP_cont_MM_Limit_II} can be used to analyse which sets of potentials and rates induce transport on macroscopic scales. To that end, we specify to~$b^i(y)=-\nabla_y\psi_i(y)$ and~$\gamma(n)=n$ in the generators defined in~\eqref{eq:intro:L_varepsilon_cont_MM}. We say that the set of potentials and rates~$\{r_{ij},\psi_i\}$ satisfies \emph{detailed balance} if for all~$i,j\in\{1,\dots,J\}$ and~$y\in\mathbb{T}^d$, we have
\begin{align}\label{MM:eq:detailed-balance}
r_{ij}(y) e^{-2\psi_i(y)}=r_{ji}(y) e^{-2\psi_j(y)}.
\end{align}
\begin{theorem}[Detailed balance implies a symmetric Hamiltonian]
\label{thm:results:detailed_balance_limit_I}
Consider the same setting and assumptions of Theorem~\ref{thm:results:LDP_cont_MM_Limit_I}. Suppose that the detailed-balance condition~\eqref{MM:eq:detailed-balance} is satisfied. Then the Hamiltonian~$\mathcal{H}(p)$ of Theorem~\ref{thm:results:LDP_cont_MM_Limit_I} satisfies~$\mathcal{H}(p) = \mathcal{H}(-p)$ for all~$p\in\mathbb{R}^d$.
\end{theorem}
We give the proof of Theorem~\ref{thm:results:detailed_balance_limit_I} here, since it is solely based on a suitable formula for~$\mathcal{H}(p)$.
\begin{proof}[Proof of Theorem~\ref{thm:results:detailed_balance_limit_I}]
	We prove in Proposition~\ref{prop:results:detailed_balance_limit_I} that under the detailed-balance condition, the principal eigenvalue~$\mathcal{H}(p)$ is given by 
	$$
	\mathcal{H}(p)
	=
	\sup_{\mu \in \mathbf{P}} \left[ K_p(\mu) - \mathcal{R}(\mu) \right],
	$$
	where $\mathbf{P} \subset \mathcal{P}(E^\prime)$ is a subset of probability measures on $E^\prime = \mathbb{T}^d \times \{1, \dots, J\}$ specified in Proposition~\ref{prop:results:detailed_balance_limit_I}, $\mathcal{R}(\mu)$ is the relative Fisher information specified in~\eqref{MM:eq:relative-Fisher-information}, and $K_p(\mu)$ is given by
	\begin{multline*}
	K_p(\mu)= 
	\inf_{\phi}\bigg\{
	\sum_{i = 1}^J \int_{\mathbb{T}^d}
	\left(
	\frac{1}{2}|\nabla \phi_i(x) + p|^2 
	-
	\sum_{j = 1}^J r_{ij}(x)
	\right) \,
	\dd \mu_i(x)
	\\+
	\sum_{i, j = 1}^J
	\int_{\mathbb{T}^d} \pi_{ij}(x) \sqrt{\overline{\mu}_i(x) \overline{\mu}_j(x)}
	e^{\psi_j(x) + \psi_i(x)} \cosh{(\phi(x,j) - \phi(x,i))}
	\,
	\dd x
	\bigg\},
	\end{multline*}
	where $\pi_{ij}(x) = r_{ij}(x) e^{-2\psi_i(x)}$, the infimum is taken over vectors of functions $\phi_i = \phi(\cdot,i) \in C^2(\mathbb{T}^d)$, and $\dd \mu_i(x) = \overline{\mu}_i(x) \dd x$.
	\medskip
	
	Let~$\mu\in\mathbf{P}$. We show that~$K_p(\mu) = K_{-p}(\mu)$, which implies~$\mathcal{H}(p)=\mathcal{H}(-p)$. The sum in which the~$\cosh(\cdot)$ terms appear is symmetric in the sense that
	\begin{equation*}
	C(\phi):=
	\sum_{i, j = 1}^J \int_{\mathbb{T}^d} \pi_{ij}(x)
	\sqrt{\overline{\mu}_i(x) \overline{\mu}_j(x)}
	e^{\psi_j(x) + \psi_i(x)} \cosh{(\phi(x,j) - \phi(x,i))}
	\,
	\dd x
	\end{equation*}
	satisfies $C(\phi) = C(-\phi)$. The bijective transformation $\phi \to (-\phi)$ leaves the infimum in~$K_p(\mu)$ invariant, and hence symmetry of~$C(\phi)$ implies the claimed symmetry~$K_p(\mu) = K_{-p}(\mu)$.
\end{proof}
With a similar analysis, we can study the behaviour of molecular motors under external forces. Let $(X^n, I^n)$ be the stochastic process of Theorem~\ref{thm:results:LDP_cont_MM_Limit_I} in dimension~$d = 1$ with drift~$b^i(y)=F - \psi'(y,i)$, where~$F$ is a constant (modeling an external force) and~$\psi \in C^\infty(\mathbb{T})$ is a smooth periodic potential. The process~$(X^n, I^n)$ is~$\mathbb{T} \times \{1,\dots,J\}$-valued and satisfies
$$
\dd X^n_t = (F - \psi'(n X^n_t, I^n_t))\, \dd t + \frac{1}{\sqrt{n}} \,\dd B_t,
$$
where $I^n_t$ a jump process on $\{1,\dots,J\}$ with jump rates~$n r_{ij}\left(nx\right)$.
Under detailed balance, one can show with arguments similar as above that the Hamiltonian for this process is symmetric around $(-F)$.
Since~$\mathcal{H}(0) = 0$ and~$\mathcal{H}(p)$ is strictly convex, this means that the model predicts a positive force-velocity feedback under detailed balance:~$F > 0$ implies~$\partial_p\mathcal{H}(0) > 0$, and~$F < 0$ implies~$\partial_p\mathcal{H}(0) < 0$. 
\begin{theorem}[Separation of time scales implies a symmetric Hamiltonian]
\label{thm:results:symmetry_limit_II}
	Let the stochastic process~$(X^n_t, I^n_t)$ of Definition~\ref{def:intro:cont_MM}, with~$b^i=-\nabla\psi^i$, satisfy the assumptions of Theorem \ref{thm:results:LDP_cont_MM_Limit_II}. Suppose in addition that the rates $r_{ij}(\cdot)$ are constant on $\mathbb{T}^d$. Then $\overline{\mathcal{H}}(p) = \overline{\mathcal{H}}(-p)$, where $\overline{\mathcal{H}}(p)$ is the Hamiltonian in Theorem \ref{thm:results:LDP_cont_MM_Limit_II}.
\end{theorem}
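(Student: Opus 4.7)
The plan is to reduce, under the constant-rates hypothesis, the Hamiltonian $\overline{\mathcal{H}}(p)$ to the principal eigenvalue of a tilted generator of a single \emph{reversible} drift-diffusion on $\mathbb{T}^d$, and then to read off the symmetry $\overline{\mathcal{H}}(p)=\overline{\mathcal{H}}(-p)$ from that reversibility.

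First, with $r_{ij}(\cdot)$ constant on $\mathbb{T}^d$ the jump process on $\{1,\dots,J\}$ has the same rates at every $y$, so its stationary distribution $\mu_y=\mu\in\mathcal{P}(\{1,\dots,J\})$ is $y$-independent. Setting $\overline{\psi}(y):=\sum_{i=1}^J\mu(i)\,\psi^i(y)$, the averaged drift from Theorem~\ref{thm:results:LDP_cont_MM_Limit_II} becomes the gradient $\overline{F}(y)=\nabla\overline{\psi}(y)$, and the variational formula collapses to
\[
\overline{\mathcal{H}}(p)=\inf_{\varphi\in C^\infty(\mathbb{T}^d)}\sup_{y\in\mathbb{T}^d}\Bigl[\tfrac{1}{2}\Delta\varphi+\tfrac{1}{2}\bigl|\nabla\varphi+p\bigr|^2-\nabla\overline{\psi}\cdot(\nabla\varphi+p)\Bigr].
\]
The Hopf-Cole substitution $u=e^{\varphi}$ turns the bracket into $\mathcal{A}_pu/u$ with
\[
\mathcal{A}_pu:=\tfrac{1}{2}\Delta u+(p-\nabla\overline{\psi})\cdot\nabla u+\bigl(\tfrac{1}{2}|p|^2-p\cdot\nabla\overline{\psi}\bigr)u,
\]
so by the Collatz-Wielandt/Donsker-Varadhan characterisation of the principal eigenvalue for second-order elliptic operators on $\mathbb{T}^d$ (cf.\ Appendix~\ref{appendix:prinipal_ev} and \cite{padilla1997principal}), $\overline{\mathcal{H}}(p)$ is exactly the principal eigenvalue of $\mathcal{A}_p$.

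Next I would exploit reversibility. Since $\overline{F}=\nabla\overline{\psi}$, the operator $L:=\tfrac{1}{2}\Delta-\nabla\overline{\psi}\cdot\nabla$ satisfies $Lf=\tfrac{1}{2}e^{2\overline{\psi}}\nabla\cdot(e^{-2\overline{\psi}}\nabla f)$ and is therefore self-adjoint in $L^2(\mathbb{T}^d,\pi)$ with reference measure $\pi(dy):=e^{-2\overline{\psi}(y)}dy$. The operator $\mathcal{A}_p$ is the formal twist $e^{-p\cdot y}Le^{p\cdot y}$ of $L$, and writing $\mathcal{A}_p=L+p\cdot\nabla+(\tfrac{1}{2}|p|^2-p\cdot\nabla\overline{\psi})$ and integrating the first-order term by parts against $g\pi$ (using $\nabla\pi=-2\nabla\overline{\psi}\,\pi$), a one-line calculation yields
\[
\langle\mathcal{A}_pf,g\rangle_\pi=\langle f,\mathcal{A}_{-p}g\rangle_\pi\qquad\text{for all }f,g\in C^\infty(\mathbb{T}^d).
\]
Hence $\mathcal{A}_p^{\ast}=\mathcal{A}_{-p}$ in $L^2(\pi)$. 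Since the principal eigenvalue of a second-order elliptic operator on the compact manifold $\mathbb{T}^d$ coincides with that of its formal adjoint (Krein-Rutman on the resolvent, or equivalently by applying the $\inf\sup$ characterisation to both $\mathcal{A}_p$ and $\mathcal{A}_p^{\ast}$), I obtain
\[
\overline{\mathcal{H}}(p)=\operatorname{p.e.v.}(\mathcal{A}_p)=\operatorname{p.e.v.}(\mathcal{A}_p^{\ast})=\operatorname{p.e.v.}(\mathcal{A}_{-p})=\overline{\mathcal{H}}(-p),
\]
which is the claimed symmetry.

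The step I expect to require the most care is the last one: the paper characterises $\overline{\mathcal{H}}(p)$ through the $\inf\sup$ formula for $\mathcal{A}_p$ rather than for its adjoint, so the symmetry does not follow by a direct substitution $\varphi\mapsto-\varphi$ in the variational formula (the Laplacian carries the wrong sign) but only after invoking principal-eigenvalue theory to pass to the adjoint. The remaining ingredients — the $y$-independence of $\mu_y$, the identification of the averaged drift as a gradient, the Hopf-Cole reformulation and the adjoint computation — are essentially routine. This strategy is completely parallel to the proof of Theorem~\ref{thm:results:detailed_balance_limit_I}: detailed balance makes the full switching generator reversible against $\sum_i e^{-2\psi^i(y)}dy\otimes\delta_i$, whereas in the present setting the reversibility arises only after the separation of time scales $\gamma(\varepsilon)\to\infty$ has averaged out the fast jump component.
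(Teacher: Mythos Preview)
Your proof is correct but takes a genuinely different route from the paper's. The paper proves Theorem~\ref{thm:results:symmetry_limit_II} via the Donsker--Varadhan dual representation of the principal eigenvalue (Proposition~\ref{prop:results:detailed_balance_limit_II}): it writes $\overline{\mathcal{H}}(p)=\sup_{\mu\in\mathbf{P}}[K_p(\mu)-\mathcal{R}(\mu)]$, shows that under constant rates the averaged drift is a gradient $\overline{F}=\nabla\overline{\psi}$, absorbs $\overline{\psi}$ into the test function by the shift $\varphi\mapsto\varphi+\overline{\psi}$ so that $K_p(\mu)=\inf_\varphi\tfrac12\int|\nabla\varphi+p|^2\,d\mu$, and then reads off $K_p=K_{-p}$ from the substitution $\varphi\mapsto-\varphi$. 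Your argument bypasses the dual representation entirely: you identify $\overline{\mathcal{H}}(p)$ as the principal eigenvalue of the tilted generator $\mathcal{A}_p$, exploit the reversibility of $L=\tfrac12\Delta-\nabla\overline{\psi}\cdot\nabla$ with respect to $\pi=e^{-2\overline{\psi}}dy$ to obtain $\mathcal{A}_p^{\ast}=\mathcal{A}_{-p}$ in $L^2(\pi)$, and conclude via the Krein--Rutman fact that an operator and its adjoint share the same principal eigenvalue. Your approach is more operator-theoretic and arguably more transparent about \emph{why} the symmetry holds (it is exactly the reversibility of the averaged dynamics), while the paper's approach keeps everything at the level of variational formulas and yields, as a by-product, the explicit decomposition of $\overline{\mathcal{H}}(p)$ into a kinetic part $K_p$ and a Fisher-information part $\mathcal{R}$. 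One cosmetic remark: the expression $e^{-p\cdot y}Le^{p\cdot y}$ is only a formal mnemonic since $p\cdot y$ is not a function on $\mathbb{T}^d$; your actual computation uses only the explicit coefficients of $\mathcal{A}_p$ and is unaffected.
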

Since the derivation of the required formula for~$\overline{\mathcal{H}}(p)$ is similar to the derivation of~$\mathcal{H}(p)$, we omit the details and only give a sketch of the argument here.
\begin{proof}[Sketch of proof of Theorem~\ref{thm:results:symmetry_limit_II}]
	The principal eigenvalue $\overline{\mathcal{H}}(p)$ is given by
	$$
	\overline{\mathcal{H}}(p)
	=
	\sup_{\mu \in \mathbf{P}}
	\left[
	K_p(\mu) - \mathcal{R}(\mu)
	\right],\quad
	K_p(\mu) = \inf_{\varphi\in C^\infty(\mathbb{T}^d)} \frac{1}{2} \int_{\mathbb{T}^d} | \nabla \varphi + p|^2 \, \dd \mu,
	$$
	with~$\mathbf{P}$ and~$\mathcal{R}$ specified below.
	The bijective transformation $\varphi \to (-\varphi)$ leaves the infimum in~$K_p(\mu)$ invariant, and therefore we have~$K_p(\mu) = K_{-p}(\mu)$ for all~$\mu\in\mathbf{P}$. This implies~$\overline{\mathcal{H}}(p) = \overline{\mathcal{H}}(-p)$.
	\medskip
	
	In the formula for~$\overline{\mathcal{H}}(p)$, the set of probability measures~$\mathbf{P} \subset \mathcal{P}(\mathbb{T}^d)$ is
	\[
	\mathbf{P}
	=
	\left\{ \mu \in \mathcal{P}(\mathbb{T}^d) \, : \, \mu \ll \dd x
	\, \text{ and }\dd \mu = \overline{\mu} \dd x \,
	\text{ with } \nabla 
	\left(
	\log \overline{\mu} 
	\right) 
	\in L^2_{\mu}(\mathbb{T}^d) \right\}.
	\]
	The map~$\mathcal{R}$ is the relative Fisher information; with the stationary measure~$\nu$ of the jump process on~$\{1,\dots,J\}$ with rates~$r_{ij}$,
	\begin{equation*}
	\mathcal{R}(\mu) = \frac{1}{8}\int_{\mathbb{T}^d}\left|\nabla\log\left(\frac{\overline{\mu}}{e^{-2\overline{\psi}}}\right)\right|^2\,\dd\mu,\quad \overline{\psi}(x)=\sum_i \nu_i\,\psi_i(x).
	\end{equation*}
\end{proof}
\section{Proof of large-deviation principle for switching Markov processes}
\label{sec:proof-of-LDP}
The main point of this section is to prove Theorem~\ref{thm:results:LDP_switching_MP}, the large-deviation principle for switching Markov processes in a periodic setting. The proof is based on a connection between large deviations and Hamilton-Jacobi equations that we first make explicit in Section~\ref{sec:strategy-LDP-switching-MP} by adapting Theorems of~\cite{FengKurtz2006} to our setting.
\subsection{Strategy of proof}
\label{sec:strategy-LDP-switching-MP}
\emph{Viscosity solutions and comparison principle.} We adapt~\cite[Definitions~6.1 and~7.1]{FengKurtz2006} to the compact setting. For a Banach space~$B$, we identify operators with graphs~$H\subseteq B\times B$, with domain~$\mathcal{D}(H):=\{f\,:\exists\,(f,g)\in H\}$ and range~$\mathcal{R}(H):=\{g\,:\,\exists (f,g)\in H\}$, and refer to them as \emph{multivalued operators}. For the following definition,~$E$ and~$E'$ are compact Polish spaces,~$B(E\times E')$ is the set of measurable and bounded functions on~$E\times E'$, equipped with the uniform norm, and~$M(E\times E')$ is the set of measurable functions.
\begin{definition}[Viscosity solutions]
	Let $H \subseteq C(E) \times M(E\times E^\prime)$ be a multivalued operator with domain~$\mathcal{D}(H) \subseteq C(E)$. Let~$h\in C(E)$ and~$\tau>0$.
	\begin{itemize}
		\item[i)] A function~$u_1:E\to\mathbb{R}$ is a viscosity subsolution of~$(1 - \tau H) u = h$ if it is bounded and upper semicontinuous, and if for all~$(f,g)\in H$ there exists a point~$(x,z^\prime)\in E\times E^\prime$ such that
		\[
		(u_1-f)(x)=\sup(u_1-f)\quad\text{and}\quad u_1(x)-\tau g(x,z^\prime)-h(x)\leq 0.
		\]
		\item[ii)] A function~$u_2:E\to\mathbb{R}$ is a viscosity supersolution of $(1 - \tau H) u = h$ if it is bounded and lower semicontinuous, and if for all $(f,g)\in H$ there exists a point $(x,z^\prime)\in E\times E^\prime$ such that
		\[
		(f-u_2)(x)=\sup(f-u_2)\quad\text{and}\quad u_2(x)-\tau g(x, z^\prime)-h(x)\geq 0.
		\]
		\item[iii)] A function~$u_1:E\to\mathbb{R}$ is a strong viscosity subsolution of $(1 - \tau H) u = h$ if it is bounded and upper semicontinuous, and if for all $(f,g) \in H$ and $x\in E$, whenever
		\[
		(u_1-f)(x)=\sup(u_1-f),
		\]
		then there exists a $z^\prime\in E^\prime$ such that
		\[
		u_1(x)-\tau g(x,z^\prime)-h(x)\leq 0.
		\]
		Similarly for strong viscosity supersolutions.
	\end{itemize}
	A function $u\in C(E)$ is called a viscosity solution of $(1 - \tau H) u = h$ if it is both a viscosity sub- and supersolution.\qed
	\label{def:appendix:viscosity_solutions_multivalued_op}
\end{definition}
Let us briefly highlight the adaptations we made with respect to~\cite{FengKurtz2006}. First, formulating viscosity solutions via sequences as in~\cite[Definition~7.1]{FengKurtz2006} is only required when working with non-compact spaces, while in the context of this paper we only work in compact spaces. Second, the product space~$E\times E'$ in this paper corresponds to the set~$E'$ in~\cite{FengKurtz2006}.
\begin{definition}[Comparison Principle]
	The \emph{comparison principle} holds for viscosity sub- and supersolutions of~$(1-\tau H)u=h$ if for any viscosity subsolution~$u_1$ and viscosity supersolution~$u_2$, we have~$u_1\leq u_2$ on~$E$. \qed
	\label{def:appendix:CP_single_valued_operator}
\end{definition}
If the comparison principle holds, then viscosity solutions are unique, since two viscosity solutions~$u,v$ satisfy~$u\leq v$ and~$v\leq u$.
\smallskip

\noindent
\emph{A general large-deviation theorem.} Just as in Theorem~\ref{thm:results:LDP_switching_MP}, we work with compact Polish spaces~$E_n$,~$E$ and~$E^\prime$ that are related via continuous embeddings~$\eta_n$ and~$\eta_n^\prime$ by
$$
\begin{tikzcd}[row sep = tiny]
&	E \times E^\prime \arrow[dd, "\mathrm{proj}_1"]	\\
E_n \arrow[ur, "(\eta_n {,} \eta_n^\prime)"] \arrow[dr, "\eta_n"']	&	\\
&	E	
\end{tikzcd}
$$
such that for any $x \in E$, there exist $x_n \in E_n$ such that $\eta_n(x_n) \to x$ as $n \to \infty$.
The following Theorem is an adaptation of~\cite[Theorem 7.18]{FengKurtz2006} to our setting. This adaptation is obtained by collecting in one place assumptions that are mentioned in several places in~\cite{FengKurtz2006}, and specializing them to the compact setting.
\begin{theorem}
	Let $L_n$ be the generator of an $E_n$-valued process $Y^n$, and let $H_n$ be the nonlinear generators defined by $H_n f = \frac{1}{n} e^{-nf} L_n e^{nf}$. Let the compact Polish spaces $E_n, E$ and $E'$ be related as in the above diagram. In addition, suppose:
	\begin{enumerate}[(i)]
		\item(Condition 7.9 of \cite{FengKurtz2006} on the state spaces)
		\label{item:condition79:FengKurtz}
		There exists an index set $Q$ and approximating state spaces $A_n^q \subseteq E_n$, $q \in Q$, such that the following holds:
		\begin{enumerate}[(a)]
			\item For $q_1, q_2 \in Q$, there exists $q_3 \in Q$ such that $A_n^{q_1} \cup A_n^{q_2} \subseteq A_n^{q_3}$.
			\item For each $x \in E$, there exists $q \in Q$ and $y_n \in A^q_n$ such that $\eta_n(y_n) \to x$ as $n \to \infty$.
			\item For each $q \in Q$, there exist compact sets $K_1^q \subseteq E$ and $K_2^q \subseteq E \times E^\prime$ such that
			\[
			\sup_{y \in A^q_n} \inf_{x \in K_1^q} d_E(\eta_n(y), x) \xrightarrow{n\to\infty} 0,
			\]
			and
			\[
			\sup_{y \in A^q_n} \inf_{(x,z) \in K_2^q} 
			\left[ d_E(\eta_n(y),x)) + d_{E'}(\eta_n'(y),z) \right] \xrightarrow{n\to\infty} 0.
			\]
			\item For each compact $K \subseteq E$, there exists $q \in Q$ such that
			$
			K \subseteq \liminf \eta_n(A_n^q).
			$
		\end{enumerate}
		\item 
		(Convergence Condition 7.11 of \cite{FengKurtz2006}) 
		\label{item:condition711:FengKurtz}
		There exist multivalued operators $H_\dagger, H_{\ddagger} \subseteq C(E) \times C(E \times E^\prime)$ which are the limit of the $H_n$'s in the following sense:
		\begin{enumerate}[(a)]
			\item
			For each $(f,g) \in H_\dagger$, there exist $f_n \in \mathcal{D}(H_n)$ such that
			\[
			\sup_n \left(
			\sup_{x \in E_n} |f_n (x)| + \sup_{x \in E_n} |H_n f_n (x)|
			\right)
			< \infty,
			\]
			and for each $q\in Q$,~$\lim_{n \to \infty} \sup_{y \in A_n^q}|f_n(y) - f(\eta_n(y))| = 0.$
			Furthermore, for each $q \in Q$ and every sequence $y_n \in A_n^q$ such that $\eta_n (y_n) \to x \in E$ and $\eta'_n(y_n) \to z^\prime \in E^\prime$, we have~$\limsup_{n \to \infty} H_n f_n (y_n)
			\leq
			g(x,z^\prime)$.
			\item 
			For each $(f,g) \in H_\ddagger$, there exist $f_n \in \mathcal{D}(H_n)$ (not necessarily the same as above in (a)) such that
			\[
			\sup_n \left(
			\sup_{x \in E_n} |f_n (x)| + \sup_{x \in E_n} |H_n f_n (x)|
			\right)
			< \infty,
			\]
			and for each $q\in Q$,~$\lim_{n \to \infty} \sup_{y \in A_n^q}|f_n(y) - f(\eta_n(y))| = 0$.
			Furthermore, for each $q \in Q$ and every sequence $y_n \in E_n$ such that $\eta_n (y_n) \to x \in E$ and $\eta'_n(y_n) \to z^\prime \in E^\prime$, we have~$\liminf_{n \to \infty} H_n f_n (y_n)
			\geq
			g(x,z^\prime)$.
		\end{enumerate}
		\item (Comparison principle) 
		\label{item:CP:FengKurtz}
		For each $h \in C(E)$ and $\tau > 0$, the comparison principle holds for viscosity subsolutions of $(1 - \tau H_\dagger) u = h$ and viscosity supersolutions of $(1 - \tau H_\ddagger) u = h$.
	\end{enumerate}
	Let $X^n_t := \eta_n (Y^n_t)$ be the corresponding $E$-valued process. Suppose that $\{X^n(0)\}_{n \in \mathbb{N}}$ satisfies a large-deviation principle in $E$ with rate function $\mathcal{I}_0 : E \to [0,\infty]$.
	\medskip
	
	Then
	$\{X^n\}_{n \in \mathbb{N}}$
	satisfies the large-deviation principle with a rate function $\mathcal{I} : C_E[0,\infty) \to [0,\infty]$. Furthermore, there exists a semigroup~$V(t):C(E)\to C(E)$ with which the rate function is given by 
	\begin{equation}\label{eq:rate-function:FengKurtz}
	\mathcal{I}(x) = \mathcal{I}_0(x(0)) + \sup_{k\in\mathbb{N}}\sup_{(t_1,\dots,t_k)} \sum_{i=1}^k \mathcal{I}_{t_i-t_{i-1}}(x(t_i)|x(t_{i-1})),
	\end{equation}
	where for~$z,y\in E$,
	\begin{equation}\label{eq:rate-function-2:FengKurtz}
	\mathcal{I}_t(z|y) = \sup_{f\in C(E)}\left[f(z)-V(t)f(y)\right].
	\end{equation}
	\label{thm:appendix:LDP_via_CP:Jin_LDP_thm}
\end{theorem}
The semigroup~$V(t)$ is defined via the Crandall-Liggett Theorem---for details we refer to~\cite[Chapter~5]{FengKurtz2006}. 
\subsection{Proof of Theorem~\ref{thm:results:LDP_switching_MP}}
We prove Theorem~\ref{thm:results:LDP_switching_MP} by verifying the conditions of Theorem~\ref{thm:appendix:LDP_via_CP:Jin_LDP_thm}, which are convergence of nonlinear generators (Proposition~\ref{prop:LDP_switching_MP:convergence_condition_sufficient}) and the comparison principle (Proposition~\ref{prop:LDP_switching_MP:comparison_principle}). The rest of this section below the proof of Theorem~\ref{thm:results:LDP_switching_MP} is devoted to proving the propositions. We point out that the main challenge is to prove the comparison principle using only~\ref{MM:item:T1} and~\ref{MM:item:T2} of Theorem~\ref{thm:results:LDP_switching_MP}.
\begin{proposition}
	In the setting of Theorem~\ref{thm:results:LDP_switching_MP}, condition~(i) of Theorem~\ref{thm:appendix:LDP_via_CP:Jin_LDP_thm} is satisfied. Let $H \subseteq C^1(\mathbb{T}^d) \times C(\mathbb{T}^d \times E^\prime)$ be a multivalued operator satisfying~\ref{MM:item:T1}. Then~$H$ satisfies the convergence condition~\ref{item:condition711:FengKurtz} of Theorem~\ref{thm:appendix:LDP_via_CP:Jin_LDP_thm}.
	\label{prop:LDP_switching_MP:convergence_condition_sufficient}
\end{proposition}	
\begin{proposition}
	In the setting of~Theorem \ref{thm:results:LDP_switching_MP}, let $H \subseteq C^1(\mathbb{T}^d) \times C(\mathbb{T}^d \times E^\prime)$ be a multivalued operator satisfying conditions~\ref{MM:item:T1} and~\ref{MM:item:T2}. Then for $\tau > 0$ and $h \in C(\mathbb{T}^d)$, the comparison principle is satisfied for viscosity sub- and supersolutions of~$(1 - \tau H) u =h$.
	\label{prop:LDP_switching_MP:comparison_principle}
\end{proposition}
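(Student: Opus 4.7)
The plan is to reduce the comparison principle for viscosity sub- and supersolutions of $(1-\tau H)u = h$ to the comparison principle for the scalar Hamilton-Jacobi equation $u - \tau \mathcal{H}(\nabla u) = h$ on the compact torus $\mathbb{T}^d$, which is classical; the work lies in the reduction, which exploits condition (T2) decisively.

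The key algebraic observation is that, by (T2) and the structural form (C3), the upper and lower envelopes
\[
H_\dagger f(x) := \inf_{\varphi}\sup_{z'\in E'}H_\varphi(\nabla f(x),z'),\qquad H_\ddagger f(x) := \sup_{\varphi}\inf_{z'\in E'}H_\varphi(\nabla f(x),z')
\]
of the multivalued operator $H$ both coincide with $\mathcal{H}(\nabla f(x))$. Indeed, the general min-max inequality gives $H_\ddagger f \leq H_\dagger f$; on the other hand, substituting $\varphi = \varphi_p$ with $p = \nabla f(x)$ from (T2) forces $\sup_{z'}H_{\varphi_p}(p,z') = \inf_{z'}H_{\varphi_p}(p,z') = \mathcal{H}(p)$, which sandwiches both envelopes between $\mathcal{H}(\nabla f(x))$. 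Hence $H_\dagger f = H_\ddagger f = \mathcal{H}(\nabla f)$. I would then verify that any viscosity subsolution $u$ of $(1-\tau H)u = h$ is a subsolution of $w - \tau\mathcal{H}(\nabla w) = h$, and analogously for supersolutions: given a smooth $f$ with $u - f$ attaining a maximum at $x_0$, the corrector $\varphi_{\nabla f(x_0)}$ from (T2) yields an admissible element $(f,H_{f,\varphi_{\nabla f(x_0)}})\in H$ whose image at $x_0$ is the constant $\mathcal{H}(\nabla f(x_0))$, and the multivalued subsolution inequality becomes $u(x_0) - \tau\mathcal{H}(\nabla f(x_0)) \leq h(x_0)$.

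Once the problem is reduced, a standard doubling-of-variables argument closes the comparison for the scalar equation on $\mathbb{T}^d$: assume $\sup(u-v) > 0$, penalize with $(\alpha/2)d(x,y)^2$, apply the sub- and supersolution inequalities at the maximum points of $\Phi_\alpha(x,y) = u(x) - v(y) - (\alpha/2)d(x,y)^2$, and let $\alpha\to\infty$; only continuity of $\mathcal{H}$, inherited from the principal-eigenvalue characterization, is needed. The main obstacle I anticipate is the middle step, namely connecting the multivalued viscosity formulation with the scalar one and constructing admissible test functions that incorporate the corrector $\varphi_{\nabla f(x_0)}$ in a way compatible with the multivalued definitions of Appendix \ref{appendix:LDP_via_CP} and with the possibly merely continuous regularity of $\varphi_p$ in $z'$. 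This is essentially Evans' perturbed-test-function argument from homogenization at the level of viscosity solutions; the remainder of the proof is either algebraic manipulation of a min-max expression or an appeal to a well-known Hamilton-Jacobi comparison result on compact manifolds.
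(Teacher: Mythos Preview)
Your proposal is essentially the paper's approach. Your envelopes $H_\dagger,H_\ddagger$ are exactly the paper's operators $H_1,H_2$ from Definition~\ref{def:LDP_switching_MP:H1_H2}, and the doubling-of-variables argument you sketch is the one carried out in the proof of Proposition~\ref{prop:LDP_switching_MP:comparison_principle}. Two remarks on the differences in presentation:

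First, you make the sharper observation that $H_\dagger=H_\ddagger=\mathcal{H}$, whereas the paper only records $\mathcal{H}_1\leq\mathcal{H}\leq\mathcal{H}_2$ (Lemma~\ref{lemma:LDP_switching_MP:H1_leq_H2}). Your equality is correct, since the general min--max inequality supplies the reverse direction; the paper simply does not need it, because in the doubling argument the test-function gradients satisfy $\nabla f_1^\delta(x_\delta)=\nabla f_2^\delta(x_\delta')$ \emph{exactly}, so the pointwise inequality $\mathcal{H}_1\leq\mathcal{H}_2$ already kills that term. For the same reason, continuity of $\mathcal{H}$ is not required anywhere.

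Second, the ``main obstacle'' you anticipate---passing from the multivalued subsolution inequality to the scalar one at a \emph{given} maximum point $x_0$---is resolved in the paper not by a perturbed-test-function argument \`a la Evans, but by the much simpler Lemma~\ref{lemma:LDP_switching_MP:local_op_strong_sol}: because the images $H_{f,\varphi}$ depend on $x$ only through $\nabla f(x)$ (condition (C3)), one can add a bump function vanishing to second order at $x_0$ to force $x_0$ to be the \emph{unique} maximizer without altering the image value there. This upgrades viscosity solutions to strong viscosity solutions, after which Lemma~\ref{lemma:LDP_switching_MP:H_to_H1-H2} gives precisely the reduction you want. No regularity of $\varphi_p$ beyond membership in the parametrizing class is needed.
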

\begin{proof}[Proof of Theorem \ref{thm:results:LDP_switching_MP}]
	By Proposition~\ref{prop:LDP_switching_MP:convergence_condition_sufficient}, conditions (i) and (ii) of Theorem~\ref{thm:appendix:LDP_via_CP:Jin_LDP_thm} hold with the single operator $H = H_\dagger = H_\ddagger$. By Proposition~\ref{prop:LDP_switching_MP:comparison_principle}, the comparison principle is satisfied for $(1 - \tau H)u = h$, and hence condition (iii) of Theorem~\ref{thm:appendix:LDP_via_CP:Jin_LDP_thm} holds with a single operator $H = H_\dagger = H_\ddagger$. Therefore the large-deviation principle follows by Theorem~\ref{thm:appendix:LDP_via_CP:Jin_LDP_thm}.
\end{proof}
\begin{proof}[Proof of Proposition \ref{prop:LDP_switching_MP:convergence_condition_sufficient}]
	We recall that with~$E_n = E_n^X \times \{1,\dots,J\}$ and~$\iota_n : E^X_n \to \mathbb{T}^d$ of Condition~\ref{condition:results:general_setting}, the state spaces are related as in the following diagram,
	$$
	\begin{tikzcd}[row sep = tiny]
	&	\mathbb{T}^d \times E^\prime \arrow[dd, "\mathrm{proj}_1"]	\\
	E_n \arrow[ur, "(\eta_n {,} \eta_n^\prime)"] \arrow[dr, "\eta_n"']	&	\\
	&	\mathbb{T}^d	
	\end{tikzcd}
	$$
	where~$\eta_n : E_n \to \mathbb{T}^d$ is defined by~$\eta_n(x,i) = \iota_n(x)$ and~$\eta_n' : E_n \to E'$ is a continuous map.
	In the notation of Theorem \ref{thm:appendix:LDP_via_CP:Jin_LDP_thm}, we have $E = \mathbb{T}^d$. For verifying the general condition (i) of Theorem \ref{thm:appendix:LDP_via_CP:Jin_LDP_thm} on the approximating state spaces $A_n^q$, we take the singleton $Q = \{q\}$ and set $A_n^q := E_n$. Then part (a) holds, and parts (b) and~(d) are a consequence of Condition~\ref{condition:results:general_setting} on $E_n$, which says that for any $x \in \mathbb{T}^d$, there exist $ x_n \in E_n^X$ such that $\iota_n(x_n) \to x$. Part (c) follows by taking the compact sets $K_1^q := \mathbb{T}^d$ and $K_2^q := \mathbb{T}^d \times E^\prime$.
	\medskip 
	
	We verify the convergence Condition (ii) of Theorem \ref{thm:appendix:LDP_via_CP:Jin_LDP_thm}. By~\ref{MM:item:T1}, part~\ref{MM:item:C2}, there exist $f_n \in \mathcal{D}(H_n)$ such that
	$$
	\| f\circ\eta_n - f_n \|_{L^\infty(E_n)} \xrightarrow{n\to \infty} 0
	\quad\text{and}\quad
	\| H_{f,\varphi}\circ (\eta_n, \eta_n') - H_n f_n \|_{L^\infty(E_n)} \xrightarrow{n\to\infty} 0.
	$$
	With these $f_n$, both conditions (a) and (b) are simultaneously satisfied for the operator $H = H_\dagger = H_\ddagger$, where condition~\ref{MM:item:C1} guarantees that for any point $(x,z') \in \mathbb{T}^d \times E'$ there exist $y_n \in E_n$ such that both $\eta_n(y_n) \to x$ and $\eta_n'(y_n) \to z'$. The required boundedness,
	$$
	\sup_{n \in\mathbb{N}} \left( \sup_{y \in E_n}|f_n(y)| + \sup_{y \in E_n}|H_n f_n(y)|\right) < \infty,
	$$
	follows from the uniform-convergence condition~\ref{MM:item:C2}.
\end{proof}
For proving Proposition \ref{prop:LDP_switching_MP:comparison_principle}, we use two operators~$H_1,H_2$ that are derived from a multivalued limit $H$. Define~$H_1,H_2:C(E)\to M(E)$ by
\begin{equation*}
H_1f(x)
:=
\inf_{\varphi} \sup_{z^\prime \in E^\prime}
H_{f, \varphi}(x,z^\prime)
\quad\text{and}\quad
H_2f(x)
:=
\sup_{\varphi} \inf_{z^\prime \in E^\prime}
H_{f, \varphi}(x,z^\prime),
\end{equation*}
with equal domains~$\mathcal{D}(H_1) = \mathcal{D}(H_2) := \mathcal{D}(H)$. 
Since the images of~$H$ are of the form $H_{f,\varphi}(x,z^\prime) = H_{\varphi}(\nabla f(x),z^\prime)$, the operators $H_1$ and $H_2$ are as well of the form $H_1 f(x) = \mathcal{H}_1(\nabla f(x))$ and $H_2 f(x) = \mathcal{H}_2(\nabla f(x))$, with two maps $\mathcal{H}_1,\mathcal{H}_2 : \mathbb{R}^d \rightarrow \mathbb{R}$.
We prove Proposition \ref{prop:LDP_switching_MP:comparison_principle} with the following Lemmas.
\begin{lemma}[Local operators admit strong solutions]
	Let 
	$
	H \subseteq C^1(\mathbb{T}^d) \times C(\mathbb{T}^d\times E^\prime) 
	$
	be a multivalued limit operator
	satisfying (T1) of Theorem \ref{thm:results:LDP_switching_MP}. Then for any $\tau > 0$ and $h \in C(\mathbb{T}^d)$, viscosity solutions of
	$
	(1 - \tau H) u =h
	$
	coincide with strong viscosity solutions in the sense of Definition \ref{def:appendix:viscosity_solutions_multivalued_op}.
	\label{lemma:LDP_switching_MP:local_op_strong_sol}
\end{lemma}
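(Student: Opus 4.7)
The key structural ingredient is the locality property (C3), which says that $H_{f,\varphi}(x, z') = H_\varphi(\nabla f(x), z')$ depends on the test function $f$ only through its gradient at the base point $x$. This, combined with the inclusion $C^\infty(\mathbb{T}^d) \subseteq \mathcal{D}(H) \subseteq C^1(\mathbb{T}^d)$ from (C2), is what will drive the equivalence.

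The direction from strong to (general) viscosity solutions is essentially immediate: the inclusion $\mathcal{D}(H) \subseteq C^1(\mathbb{T}^d)$ means that any admissible test function in the strong definition of Definition \ref{def:appendix:viscosity_solutions_multivalued_op} is already an admissible $C^1$ test function for the general definition, so the viscosity inequality transfers directly. For the nontrivial direction, I take a viscosity subsolution $u$ of $(1-\tau H)u = h$ and a candidate strong test pair consisting of $f \in \mathcal{D}(H)$ with $u - f$ attaining a maximum at some $x_0 \in \mathbb{T}^d$, together with a parametrization $\varphi$. The plan is to construct a smooth test function $\tilde f \in C^\infty(\mathbb{T}^d) \subseteq \mathcal{D}(H)$ satisfying $\tilde f(x_0) = f(x_0)$, $\nabla \tilde f(x_0) = \nabla f(x_0)$, and $u - \tilde f$ attaining a maximum close to $x_0$. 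This is a standard mollification combined with a small quadratic perturbation of the form $\delta |x - x_0|^2$ to strictify the maximum. By locality (C3),
\[
H_{\tilde f,\varphi}(x_0, z') = H_\varphi(\nabla \tilde f(x_0), z') = H_\varphi(\nabla f(x_0), z') = H_{f,\varphi}(x_0, z'),
\]
so the general viscosity inequality applied at the pair $(\tilde f, \varphi)$ transfers to the required strong inequality at $(f, \varphi, x_0)$ in the limit $\delta \to 0$. The supersolution case is treated symmetrically, replacing maxima by minima.

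The step that will require the most care is verifying that the quantifier structure in the multivalued viscosity definitions survives the locality translation, in particular that the sup/inf over $\varphi$ parametrizing images and over $z' \in E'$ appearing in Definition \ref{def:appendix:viscosity_solutions_multivalued_op} commutes with the approximation argument. Because the $\varphi$-dependence of the image $H_\varphi(p, z')$ is decoupled from the specific test function $f$ in which the touching takes place, fixing $\varphi$ throughout the approximation is unproblematic, and the same $\varphi$ can be used in both the strong and weak formulations. Once this bookkeeping is in place, no further analytic input is needed beyond the approximation statement, and the equivalence follows.
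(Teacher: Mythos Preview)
Your core idea for the hard direction --- perturb the test function so that the given point $x_0$ becomes the \emph{unique} maximizer, then invoke locality (C3) --- is exactly what the paper does. But your execution contains two confusions that should be cleaned up.

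First, your description of the easy direction is wrong. The two notions in Definition~\ref{def:appendix:viscosity_solutions_multivalued_op} use the \emph{same} class of test pairs $(f,g)\in H$; they differ only in the quantifier over maximizers (existential versus universal). The implication ``strong $\Rightarrow$ weak'' has nothing to do with $\mathcal D(H)\subseteq C^1$; it holds simply because on the compact torus, with $u_1\in\mathrm{BUSC}$ and $f$ continuous, a maximizer of $u_1-f$ always exists, and the strong inequality at that maximizer is already a witness for the weak definition.

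Second, for the nontrivial direction you introduce mollification and a limit $\delta\to0$, neither of which is needed and both of which create potential gaps. Since $f$ is already in $\mathcal D(H)$, there is no reason to smooth it; and if you did mollify, you would only get $\nabla\tilde f(x_0)\approx\nabla f(x_0)$, and passing to the limit in $H_\varphi(\nabla\tilde f(x_0),z')$ would require continuity of $p\mapsto H_\varphi(p,z')$, which is not part of the hypotheses. The paper's cleaner move is to add a single smooth bump $\tilde f(\cdot)=\Psi(\cdot,x_0)^2\in C^\infty(\mathbb T^d)\subseteq\mathcal D(H)$ with $\tilde f(x_0)=0$, $\tilde f>0$ elsewhere, and $\nabla\tilde f(x_0)=0$. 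Then $x_0$ is the \emph{exact unique} maximizer of $u_1-(f+\tilde f)$, so the weak subsolution property forces the inequality at $x_0$ itself, and locality gives $H_{f+\tilde f,\varphi}(x_0,z')=H_\varphi(\nabla f(x_0),z')=H_{f,\varphi}(x_0,z')$ with no limit required. Your ``close to $x_0$'' and ``$\delta\to0$'' phrasing suggests you missed that the bump pins the maximizer exactly.
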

\begin{lemma}[$H_1$ and $H_2$ are viscosity extensions]
	Let $H$ be a multivalued operator satisfying~\ref{MM:item:T1} and~\ref{MM:item:T2} of Theorem~\ref{thm:results:LDP_switching_MP}. For all $h \in C(\mathbb{T}^d)$ and $\tau > 0$, strong viscosity subsolutions $u_1$ of
	$
	(1 - \tau H) u =h
	$
	are strong viscosity subsolutions of
	$
	(1 - \tau H_1) u =h,
	$
	and strong viscosity supersolutions $u_2$ of
	$
	(1 - \tau H) u = h
	$
	are strong viscosity supersolutions of
	$
	(1 - \tau H_2) u =h.
	$
	\label{lemma:LDP_switching_MP:H_to_H1-H2}
\end{lemma}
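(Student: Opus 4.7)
The plan is to unwind the definition of strong viscosity sub- and supersolutions of $(1-\tau H)u = h$ for the multivalued operator $H$ with images in $C(\mathbb{T}^d \times E')$, and then to take infima or suprema over the parameter $\varphi$ that indexes the images. Concretely, at a strong maximum point $x_0$ of $u_1 - f$ with $f \in \mathcal{D}(H)$, the strong subsolution property should amount to the pointwise inequality
$$
u_1(x_0) - \tau \sup_{z' \in E'} H_{f,\varphi}(x_0, z') \leq h(x_0) \qquad \text{for every } \varphi,
$$
and symmetrically, at a strong minimum point $x_0$ of $u_2 - f$, the supersolution property should read
$$
u_2(x_0) - \tau \inf_{z' \in E'} H_{f,\varphi}(x_0, z') \geq h(x_0) \qquad \text{for every } \varphi.
$$

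Once these are in hand, both conclusions follow from a one-line manipulation. Rearranging the subsolution inequality gives $u_1(x_0) - h(x_0) \leq \tau \sup_{z'} H_{f,\varphi}(x_0, z')$ for every $\varphi$; passing to the infimum over $\varphi$ on the right (which is permitted because the inequality holds uniformly) yields $u_1(x_0) - h(x_0) \leq \tau H_1 f(x_0)$, which is exactly the subsolution inequality for $(1 - \tau H_1) u = h$. For the supersolution, rearranging yields $u_2(x_0) - h(x_0) \geq \tau \inf_{z'} H_{f,\varphi}(x_0, z')$ for every $\varphi$, and taking the supremum over $\varphi$ on the right produces $u_2(x_0) - \tau H_2 f(x_0) \geq h(x_0)$. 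Since $\mathcal{D}(H_1) = \mathcal{D}(H_2) = \mathcal{D}(H)$ by construction, the admissible test functions and extremum points coincide, so nothing is lost in the passage from $H$ to $H_1$ or $H_2$.

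The main point that requires care is the definition of strong viscosity sub- and supersolution for a multivalued operator whose image space is $\mathbb{T}^d \times E'$ rather than $\mathbb{T}^d$. The conventions recalled in Appendix \ref{appendix:LDP_via_CP} (following \cite{FengKurtz06}) use $\sup_{z' \in E'}$ in the subsolution inequality and $\inf_{z' \in E'}$ in the supersolution inequality; it is precisely this asymmetry that makes the $\inf_\varphi \sup_{z'}$ extension $H_1$ the correct viscosity extension on the subsolution side and the $\sup_\varphi \inf_{z'}$ extension $H_2$ the correct one on the supersolution side. Once this convention is fixed, the previous Lemma \ref{lemma:LDP_switching_MP:local_op_strong_sol} ensures that it suffices to work with strong (pointwise-at-extrema) solutions rather than with the sequential definition, and the proof reduces to the elementary inf--sup step described above.
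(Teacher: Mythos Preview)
Your proposal is correct and follows essentially the same route as the paper: fix a test function $f$ and a maximum point $x_0$ independent of $\varphi$, use the strong subsolution property to get $u_1(x_0)-h(x_0)\le \tau\sup_{z'}H_{f,\varphi}(x_0,z')$ for every $\varphi$, then take the infimum over $\varphi$; the supersolution case is symmetric. One small imprecision: the definition in Appendix~\ref{appendix:LDP_via_CP} does not literally ``use $\sup_{z'}$'' and ``$\inf_{z'}$'' in the sub- and supersolution inequalities---it says that \emph{there exists} $z'\in E'$ for which the inequality holds---but this existence statement immediately yields your $\sup_{z'}$ (resp.\ $\inf_{z'}$) bound, so the argument goes through unchanged.
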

\begin{lemma}[$H_1$ and $H_2$ are ordered]
	Let $H$ be a multivalued operator satisfying~\ref{MM:item:T1} and~\ref{MM:item:T2} of Theorem~\ref{thm:results:LDP_switching_MP}. Then~$\mathcal{H}_1(p)\leq \mathcal{H}_2(p)$ for all~$p\in\mathbb{R}^d$.
	\label{lemma:LDP_switching_MP:H1_leq_H2}
\end{lemma}
\begin{proof}[Proof of Proposition \ref{prop:LDP_switching_MP:comparison_principle}]
	Let $u_1$ be a subsolution and $u_2$ be a supersolution of the equation $(1 - \tau H)u = h$. By Lemma~\ref{lemma:LDP_switching_MP:local_op_strong_sol}, $u_1$ is a strong subsolution and $u_2$ a strong supersolution of $(1 - \tau H) u = h$, respectively. By Lemma~\ref{lemma:LDP_switching_MP:H_to_H1-H2}, $u_1$ is a strong subsolution of $(1 - \tau H_1) u = h$, and $u_2$ is a strong supersolution of $H_2$.
	\medskip 
	
	With that, we establish below the inequality
	\begin{equation}
	\max_{\mathbb{T}^d} (u_1 - u_2)
	\leq
	\tau \left[ \mathcal{H}_1(p_\delta) - \mathcal{H}_2(p_\delta) \right]
	+
	h(x_\delta) - h(x_\delta^\prime),
	\label{eq:LDP_switching_MP:CP_visc_ineq}
	\end{equation}
	with some $x_\delta, x_\delta^\prime \in \mathbb{T}^d$ such that $\text{dist}(x_\delta,x_\delta^\prime) \rightarrow 0$ as $\delta \rightarrow 0$, and certain $p_\delta \in \mathbb{R}^d$. Then using that $h\in C(\mathbb{T}^d)$ is uniformly continuous since~$\mathbb{T}^d$ is compact, and that $\mathcal{H}_1(p_\delta) \leq \mathcal{H}_2(p_\delta)$ by Lemma~\ref{lemma:LDP_switching_MP:H1_leq_H2}, we can further estimate as
	$$
	\max_{\mathbb{T}^d} (u_1 - u_2)
	\leq
	h(x_\delta) - h(x_\delta^\prime)
	\leq
	\omega_h(\text{dist}(x_\delta,x_\delta^\prime)),
	$$
	where $\omega_h : [0,\infty) \rightarrow [0,\infty)$ is a modulus of continuity satisfying $\omega_h(r_\delta) \rightarrow 0$ for $r_\delta \rightarrow 0$. Then $(u_1 - u_2) \leq 0$ follows by taking the limit $\delta \rightarrow 0$.
	\medskip
	
	We are left with proving~\eqref{eq:LDP_switching_MP:CP_visc_ineq}. Define $\Phi_\delta : \mathbb{T}^d \times \mathbb{T}^d \rightarrow \mathbb{R}$ by
	$$
	\Phi_\delta(x,x^\prime)
	:=
	u_1(x) - u_2(x^\prime) - \frac{\Psi(x,x^\prime)}{2 \delta},
	$$
	where
	\begin{equation}
	\Psi(x,x^\prime)
	:=
	\sum_{j=1}^d \sin^2\left( \pi(x_j - x_j^\prime) \right),
	\qquad
	\text{ for all } x,x^\prime \in \mathbb{T}^d.
	\label{eq:LDP_switching_MP:dist_function_E}
	\end{equation}
	Then $\Psi \geq 0$, and $\Psi(x,x^\prime) = 0$ holds if and only if $x = x^\prime$, and
	\begin{equation}
	\nabla_1 \left[ \Psi(\cdot,x^\prime)\right](x)
	=
	- \nabla_2 \left[ \Psi(x,\cdot) \right](x^\prime)
	\qquad
	\text{ for all } x,x^\prime \in \mathbb{T}^d.
	\label{eq:LDP_switching_MP:dist_function_asymm_deriv}
	\end{equation}
	By boundedness and upper semicontinuity of $u_1$ and $(-u_2)$, and compactness of $\mathbb{T}^d \times \mathbb{T}^d$, for each $\delta > 0$ there exists a pair $(x_\delta,x_\delta^\prime) \in \mathbb{T}^d \times \mathbb{T}^d$ such that
	$$
	\Phi_\delta(x_\delta,x_\delta^\prime)
	=
	\max_{x,x^\prime} \Phi_\delta(x,x^\prime).
	$$
	Since~$\Phi_\delta(x_\delta, x_\delta) \leq \Phi(x_\delta,x_\delta^\prime)$ and~$u_2$ is bounded, we obtain
	$$
	\Psi(x_\delta,x_\delta^\prime)
	\leq
	2\delta \left( u_2(x_\delta) - u_2(x_\delta^\prime) \right)
	\leq	
	4 \delta \|u_2\|_{L^\infty(\mathbb{T}^d)} = \mathcal{O}(\delta).
	$$
	Hence~$\Psi(x_\delta,x_\delta^\prime) \rightarrow 0$ as~$\delta \rightarrow 0$.
	\medskip
	
	In order to use the sub- and supersolution properties of $u_1$ and $u_2$, introduce the smooth test functions $f^\delta_1$ and $f^\delta_2$ as
	$$
	f_1^\delta(x)
	:=
	u_2(x_\delta^\prime) + \frac{\Psi(x,x_\delta^\prime)}{2\delta}
	\quad
	\text{ and }
	\quad
	f_2^\delta(x^\prime)
	:=
	u_1(x_\delta) - \frac{\Psi(x_\delta,x^\prime)}{2\delta},
	$$
	Then $f_1^\delta, f_2^\delta \in C^\infty(\mathbb{T}^d) \subseteq \mathcal{D}(H)$ are both in the domain of $H$, and hence in the domain of $H_1$ and $H_2$, respectively. Furthermore, $(u_1 - f_1)$ has a maximum at~$x = x_\delta$, and $(f_2 - u_2)$ has a maximum at~$x^\prime = x_\delta^\prime$, by definition of~$(x_\delta,x_\delta^\prime)$ and~$\Phi_\delta$. Since~$u_1$ is a strong subsolution of~$(1 - \tau H_1) u = h$, 
	$$
	u_1(x_\delta) - \tau H_1 f_1^\delta(x_\delta) - h(x_\delta)
	\leq
	0,
	$$
	and since $u_2$ is a strong supersolution of $(1 - \tau H_2) u = h$, 
	$$
	u_2(x_\delta^\prime) - \tau H_2 f_2^\delta(x_\delta^\prime) - h(x_\delta^\prime)
	\geq
	0.	
	$$
	Thereby, we can estimate $\max (u_1 - u_2)$ as
	\begin{align*}
	\max_{\mathbb{T}^d}(u_1 - u_2)
	&\leq
	\Phi_\delta(x_\delta,x_\delta^\prime)	\\
	&\leq
	u_1(x_\delta) - u_2(x_\delta^\prime)	\\
	&\leq
	\tau \left[ H_1 f_1^\delta(x_\delta)) - H_2 f_2^\delta(x_\delta^\prime)\right]
	+
	h(x_\delta) - h(x_\delta^\prime)	\\
	&=
	\tau \left[ \mathcal{H}_1 (\nabla f_1^\delta(x_\delta)) - \mathcal{H}_2 (\nabla f_2^\delta(x_\delta^\prime))\right]
	+
	h(x_\delta) - h(x_\delta^\prime).
	\end{align*}
	By \eqref{eq:LDP_switching_MP:dist_function_asymm_deriv}, 
	$
	\nabla f_1^\delta(x_\delta)
	=
	\nabla f_2^\delta(x_\delta^\prime)
	=:
	p_\delta \in \mathbb{R}^d,
	$
	which establishes \eqref{eq:LDP_switching_MP:CP_visc_ineq}, and thereby finishes the proof.
\end{proof}
The rest of the section, we prove Lemmas \ref{lemma:LDP_switching_MP:local_op_strong_sol}, \ref{lemma:LDP_switching_MP:H_to_H1-H2} and \ref{lemma:LDP_switching_MP:H1_leq_H2}. Regarding Lemma \ref{lemma:LDP_switching_MP:local_op_strong_sol}, a proof for single valued operators is given in~\cite[Lemma 9.9]{FengKurtz2006}.
\begin{proof}[Proof of Lemma \ref{lemma:LDP_switching_MP:local_op_strong_sol}]
	Let $\tau>0$, $h\in C(\mathbb{T}^d)$. We verify that subsolutions are strong subsolutions. Let~$u_1$ be a subsolution of~$(1-\tau H)u=h$ and~$(f,H_{f,\varphi}) \in H$, and let~$x\in \mathbb{T}^d$ be such that~$(u_1-f)(x)=\sup(u_1-f)$. 
	\medskip
	
	The function~$\tilde{f}$ defined by~$\tilde{f}(x^\prime):=\Psi(x^\prime,x)$, with $\Psi(x^\prime,x)$ defined by~\eqref{eq:LDP_switching_MP:dist_function_E}, is smooth and therefore~$\tilde{f}$ is in the domain $\mathcal{D}(H)$. Then $x$ is the unique maximal point of $(u_1-(f+\tilde{f}))$,
	\[
	(u_1-(f+\tilde{f}))(x)=\sup_{\mathbb{T}^d}(u_1-(f+\tilde{f})).
	\]
	Since~$u_1$ is a subsolution, there exists a point~$z'\in E'$ such that
	\[
	u_1(x)-\tau H_{f+\tilde{f},\varphi}(x,z^\prime)-h(x)\leq 0.
	\]
	Using~$\nabla\tilde{f}(x)=0$ and that~$H$ depends only on gradients by~\ref{MM:item:T1}, we obtain
	\[
	H_{f+\tilde{f},\varphi}(x,z^\prime)
	=
	H_{\varphi}\left((\nabla f+\nabla \tilde{f})(x),z^\prime\right)
	=
	H_{\varphi}(\nabla f(x),z^\prime)
	=
	H_{f,\varphi}(x,z^\prime).
	\]
	Hence
	\[
	u_1(x)-\tau H_{f,\varphi}(x,z^\prime)-h(x)\leq 0.
	\]
	Thus~$u_1$ is a strong subsolution. The argument is similar for the supersolution case, where one can use $(-\tilde{f})$.
	\medskip
	
	Vice versa, when given a strong sub- or supersolution $u_1$ or $u_2$, for every $f\in\mathcal{D}(H)$, $(u_1-f)$ and $(f-u_2)$ attain their suprema at some $x_1,x_2\in \mathbb{T}^d$ due to the continuity assumptions on the domain of $H$, the semi-continuity properties of $u_1$ and $u_2$,  and compactness of $\mathbb{T}^d$. By the strong solution properties, the sub- and supersolution inequalities follow.
\end{proof}
\begin{proof}[Proof of Lemma \ref{lemma:LDP_switching_MP:H_to_H1-H2}]
	Let $u_1$ be a strong subsolution of~$(1-\tau H)u=h$, that is for any $(f,H_{f, \varphi})$, if~$(u_1-f)(x)=\sup(u_1-f)$ for a point~$x\in \mathbb{T}^d$, then there exists a point~$z^\prime\in E^\prime$ such that
	\begin{equation}\label{eq:proof:lemma:sub-H-to-sub-H1}
	u_1(x)-\tau H_{f, \varphi}(x,z^\prime)-h(x)\leq 0.
	\end{equation}
	Let $f\in\mathcal{D}(H_1)=\mathcal{D}(H)$ and $x\in \mathbb{T}^d$ be such that $(u_1-f)(x)=\sup(u_1-f)$. For any $\varphi$ there exists a point $z^\prime\in E^\prime$ such that the above subsolution inequality~\eqref{eq:proof:lemma:sub-H-to-sub-H1} holds. Therefore for all~$x$,
	$$
	u_1(x)-h(x)
	\leq 
	\tau\sup_{z^\prime\in E^\prime} H_{f, \varphi}(x,z^\prime).
	$$
	Since the point $x\in \mathbb{T}^d$ is independent of $\varphi$, we obtain
	\[
	u_1(x)-\tau H_1f(x)-h(x)\overset{\text{def}}{=}
	u_1(x)-\tau\inf_{\varphi}\sup_{z^\prime \in E^\prime} H_{f, \varphi}(x,z^\prime)-h(x)\leq 0.
	\]
	The argument is similar for supersolutions.
\end{proof}
\begin{proof}[Proof of Lemma \ref{lemma:LDP_switching_MP:H1_leq_H2}]
	By assumption, for every~$p\in\mathbb{R}^d$ there exists a function~$\varphi_p\in C(E^\prime)$ such that for all $z^\prime\in E^\prime$,
	\[
	H_{\varphi_p}(p,z^\prime)=\mathcal{H}(p).
	\]
	Thus
	\[
	\sup_{z^\prime \in E^\prime} H_{\varphi_p}(p,z^\prime)
	=
	\mathcal{H}(p)
	=
	\inf_{z^\prime\in E^\prime}H_{\varphi_p}(p, z^\prime).
	\]
	Taking the infimum and supremum over~$\varphi$, we find
	\begin{align*}
	\mathcal{H}_1(p)
	&=
	\inf_{\varphi}\sup_{z^\prime}H_{\varphi}(p,z^\prime)\\
	&\leq
	\sup_{z^\prime}H_{\varphi_p}(p,z^\prime)
	=
	\mathcal{H}(p)
	=\inf_{z^\prime}H_{\varphi_p}(p,z^\prime)\\
	&\leq
	\sup_{\varphi}\inf_{z^\prime}H_{\varphi}(p,z^\prime)
	=
	\mathcal{H}_2(p),
	\end{align*}
	which finishes the proof.
\end{proof}
\section{Proof of action-integral representation}
\label{section:action_integral}
In this section we prove Theorem~\ref{thm:results:action_integral_representation}, the action-integral representation of the rate function of Theorem~\ref{thm:results:LDP_switching_MP}, by following the strategy outlined in~\cite[Chapter~8]{FengKurtz2006}. We first briefly summarize the strategy in Section~\ref{sec:strategy-action-integral}, specialized to our setting.
\subsection{Strategy of proof}
\label{sec:strategy-action-integral}
Let~$\mathcal{H}=\mathcal{H}(p)$ be the Hamiltonian of Theorem~\ref{thm:results:action_integral_representation} and let~$\mathcal{L}=\mathcal{L}(v)$ be the associated Lagrangian defined by
\begin{equation}\label{eq:proof-action-integral:Lagrangian}
    \mathcal{L}(v) := \sup_{p \in \mathbb{R}^d} \left[p\cdot v - \mathcal{H}(p)\right]
\end{equation}
Define~$V_{\mathrm{NS}}(t) : C(\mathbb{T}^d) \to C(\mathbb{T}^d)$ by
\begin{equation}\label{eq:proof-action-integral:nisio-semigroup}
	V_{\text{NS}}(t) f(x)
	=
	\sup_{ \substack{ \gamma \in \mathrm{AC}_{\mathbb{T}^d}[0,\infty) \\ \gamma(0) = x}}
	\left[
	f(\gamma(t)) - \int_0^t \mathcal{L}(\partial_s\gamma(s)) \, \dd s
	\right],
\end{equation}
where~$\mathrm{AC}_{\mathbb{T}^d}[0,\infty)$ is the set of absolutely continuous paths in the torus. The map~$V_{\mathrm{NS}}(t)$ is the \emph{Nisio semigroup} with cost function~$\mathcal{L}$. In Definition~8.1 and Equation~(8.10) in~\cite{FengKurtz2006}, the Nisio semigroup is defined by means of relaxed controls in order to cover a general class of possible cost functions. Since the Lagrangian~$\mathcal{L}(v)$ is convex, the semigroup~$V_{\mathrm{NS}}(t)$ equals the semigroup given in~(8.10) of~\cite{FengKurtz2006}, which can be seen by using that~$\lambda_s = \delta_{\partial_sx(s)}$ is an admissible control and by applying Jensen's inequality. Such an argument is given for example in Theorem 10.22 in~\cite{FengKurtz2006}.
\smallskip

The rate function~$\mathcal{I}$ of Theorem~\ref{thm:results:LDP_switching_MP} is given in terms of a limiting semigroup~$V(t)$ as shown in equations~\eqref{eq:rate-function:FengKurtz} and~\eqref{eq:rate-function-2:FengKurtz}. The desired action-integral representation follows if the semigroup~$V(t)$ of Theorem~\ref{thm:results:LDP_switching_MP} is equal to the Nisio semigroup~$V_{\text{NS}}(t)$ defined by~\eqref{eq:proof-action-integral:nisio-semigroup}. In~\cite[Chapter~8]{FengKurtz2006}, the equality of semigroups is traced back to conditions on their generators. In our case, the generator of the limiting seimgroup is the limiting multivalued operator~$H$ of Theorem~\ref{thm:results:LDP_switching_MP}, and the generator of the Nisio semigroup is an operator~$\mathbf{H}$ defined by the Hamiltonian~$\mathcal{H}(p)$.
We summarize in Proposition~\ref{prop:action_integral:bold_H} below that the generators satisfy the required conditions of~\cite[Chapter~8]{FengKurtz2006} and show that these conditions suffice to prove the action-integral representation.
\subsection{Proof of Theorem~\ref{thm:results:action_integral_representation}}
In this section, we first prove Theorem~\ref{thm:results:action_integral_representation} by means of Proposition~\ref{prop:action_integral:bold_H} below. The rest of the section is then devoted to proving Proposition~\ref{prop:action_integral:bold_H}.
\begin{proposition}
Under the same assumptions of Theorems~\ref{thm:results:LDP_switching_MP} and~\ref{thm:results:action_integral_representation}, define the operator $\mathbf{H} : \mathcal{D}(\mathbf{H}) \subseteq C^1(\mathbb{T}^d) \to C(\mathbb{T}^d)$ on the domain $\mathcal{D}(\mathbf{H}) = \mathcal{D}(H)$ by setting $\mathbf{H} f(x) := \mathcal{H}(\nabla f(x))$. Let~$\tau>0$ and~$h\in C(\mathbb{T}^d)$. Then:
	\begin{enumerate}[(i)]
		\item
		The Lagrangian~\eqref{eq:proof-action-integral:Lagrangian} and the operator~$\mathbf{H}$ satisfy Conditions~8.9, 8.10 and 8.11 of \cite{FengKurtz2006}, with the set of controls~$U=\mathbb{R}^d$, operator~$Af(x,u)=\nabla f(x)\cdot u$, cost function~$L(x,u)=\mathcal{L}(u)$, and~$\mathbf{H}_\dagger=\mathbf{H}_\ddagger=\mathbf{H}$.
		\item
		The comparison principle (Definition~\ref{def:appendix:CP_single_valued_operator}) holds for viscosity sub- and supersolutions of~$(1 - \tau \mathbf{H}) u = h$.
		\item
		Every viscosity solution~$u$ of~$(1 - \tau H) u = h$ is also a viscosity solution of~$(1 - \tau \mathbf{H}) u = h$.
	\end{enumerate}
	\label{prop:action_integral:bold_H}
\end{proposition}	
\begin{proof}[Proof of Theorem \ref{thm:results:action_integral_representation}]
Let~$V(t)$ be the semigroup obtained in Theorem~\ref{thm:results:LDP_switching_MP} and let~$V_{\text{NS}}(t)$ bet the Nisio semigroup~\eqref{eq:proof-action-integral:nisio-semigroup}. We shall verify that~$V(t)=V_{\text{NS}}(t)$. Then by~\cite[Theorem~8.14]{FengKurtz2006}, the rate function of Theorem~\ref{thm:results:LDP_switching_MP} (given by~\eqref{eq:rate-function:FengKurtz}) satisfies the control representation~(8.18) of \cite{FengKurtz2006}. The action-integral representation follows from this control representation by applying Jensen's inequality.
\smallskip

By~\cite[Theorem~8.27]{FengKurtz2006}, we obtain~$V_{\text{NS}}(t)=\mathbf{V}(t)$, where the semigroup~$\mathbf{V}(t)$ is defined by
\begin{equation}
    \mathbf{V}(t)
	=
	\lim_{m \to \infty}
	\left[
	\left( 1 - \frac{t}{m} \mathbf{H} \right)^{-1}
	\right]^m.
\end{equation}
The conditions of Theorem~8.27 are satisfied since Conditions~8.9, 8.10 and~8.11 of~\cite{FengKurtz2006} are satisfied by Item~(i), and since the comparison principle holds by Item~(ii).
\smallskip

By~\cite[Corollary~8.29]{FengKurtz2006}, we obtain~$V(t)=\mathbf{V}(t)$. The conditions of Corollary~8.29 are satisfied: Item~(iii) above corresponds to Item~a) of Corollary~8.29, the conditions of~\cite[Theorem~6.14]{FengKurtz2006} are satisfied under the assumptions of our Theorem~\ref{thm:results:LDP_switching_MP}, the conditions of~\cite[Theorem~8.27]{FengKurtz2006} are satisfied for the same reasons as mentioned above, and~$D_\alpha=\mathcal{D}(H)$.
\end{proof}
\begin{proof}[Proof of (i) in Proposition \ref{prop:action_integral:bold_H}]
	We first show that the following Items~\ref{item:proof-action-integral:item-a},~\ref{item:proof-action-integral:item-b},~\ref{item:proof-action-integral:item-c} imply Conditions 8.9, 8.10 and 8.11 of \cite{FengKurtz2006}, which are formulated in order to cover a more general and non-compact setting.
	\begin{enumerate}[(a)]
		\item \label{item:proof-action-integral:item-a} The function $\mathcal{L}:\mathbb{R}^d\rightarrow[0,\infty]$ is lower semicontinuous and for every $C \geq 0$, the level set
		$
		\{v\in \mathbb{R}^d\,:\,\mathcal{L}(v)\leq C\}
		$
		is relatively compact in $\mathbb{R}^d$.
		\item \label{item:proof-action-integral:item-b} For all $f\in\mathcal{D}(H)$ there exists a right continuous, nondecreasing function $\psi_f:[0,\infty)\rightarrow[0,\infty)$ such that for all $(x_0,v)\in \mathbb{T}^d \times \mathbb{R}^d$,
		\[
		|\nabla f(x_0)\cdot v|\leq \psi_f(\mathcal{L}(v))\qquad \text{and} \qquad
		\lim_{r\rightarrow\infty}\frac{\psi_f(r)}{r}=0.
		\]
		\item \label{item:proof-action-integral:item-c} For each $x_0\in E$ and every $f\in\mathcal{D}(\mathbf{H})$, there exists an absolutely continuous path $x : [0,\infty) \to \mathbb{T}^d$ such that
		\begin{equation}
		\int_0^t \mathcal{H}(\nabla f (x(s))) \, ds
		=
		\int_0^t \left[
		\nabla f(x(s)) \cdot \dot{x}(s) - \mathcal{L}(\dot{x}(s))
		\right] \, ds.
		\label{eq:action_integral:optimal_path_for_H}
		\end{equation}
	\end{enumerate}
	Regarding Items~(1)-(5) of~\cite[Condition 8.9]{FengKurtz2006}, the operator $A f(x,v) := \nabla f(x) \cdot v$ defined on the domain $\mathcal{D}(A) = \mathcal{D}(H)$ satisfies Item~(1). For Item~(2), we can take $\Gamma = \mathbb{T}^d \times \mathbb{R}^d$, and for $x_0 \in \mathbb{T}^d$, take the pair $(x,\lambda)$ with $x(t) = x_0$ and $\lambda(dv \times dt) = \delta_{0} (dv) \times dt$. Item~(3) is a consequence of the above Item~\ref{item:proof-action-integral:item-a}. Item~(4) holds since~$\mathbb{T}^d$ is compact. Item~(5) is implied by the above Item~\ref{item:proof-action-integral:item-b}. Condition~8.10 is implied by Condition~8.11 and the fact that $\mathbf{H}1 = 0$, see Remark 8.12 (e) in \cite{FengKurtz2006}. Finally, Condition~8.11 is implied by the above Item~\ref{item:proof-action-integral:item-c}, with the control $\lambda(dv \times dt) = \delta_{\partial_t{x}(t)}(dv) \times dt$.
	\medskip
	
	We turn to verifying Items~\ref{item:proof-action-integral:item-a}, \ref{item:proof-action-integral:item-b} and \ref{item:proof-action-integral:item-c}. Since $\mathcal{H}(0) = 0$, we have $\mathcal{L} \geq 0$. The Legendre-transform $\mathcal{L}$ is convex, and lower semicontinuous since the map $\mathcal{H}(p)$ is convex and finite-valued, hence in particular continuous. For $C \geq 0$, we prove that the set $\{v\in\mathbb{R}^d\,:\,\mathcal{L}(v)\leq C\}$ is bounded, and hence is relatively compact. For any $p \in \mathbb{R}^d$ and $v \in \mathbb{R}^d$, we have $p \cdot  v\leq \mathcal{L}(v) + \mathcal{H}(p)$. Thereby, if $\mathcal{L}(v)\leq C$, then
	$
	|v|
	=
	\sup_{|p|=1} p \cdot v 
	\leq
	\sup_{|p|=1} \left[
	\mathcal{L}(v) + \mathcal{H}(p)
	\right]
	\leq
	C + C_1,
	$
	where $C_1$ exists due to continuity of $\mathcal{H}$. Then for $R := C + C_1$,
	$
	\{ v \, : \, \mathcal{L}(v) \leq C \} \subseteq 
	\{ v \, : \, |v| \leq R\},
	$
	thus $\{\mathcal{L}\leq C\}$ is a bounded subset in $\mathbb{R}^d$.
	\medskip
	
	Item~\ref{item:proof-action-integral:item-b} can be proven as in~\cite[Lemma 10.21]{FengKurtz2006}. We give the proof here. Let~$f\in\mathcal{D}(H)$. There exists a constant~$C_f$ such that for all~$(x_0,v)$, we have
	\[
	|\nabla f(x_0)\cdot v| \leq C_f\cdot |v|.
	\]
	For~$s\geq 0$, define the map~$\varphi(s)$ by
	\[
	\varphi(s) := s \inf_{|v|\geq s} \frac{\mathcal{L}(v)}{|v|}.
	\]
	Let~$\psi_f(r):=C_f\cdot \varphi^{-1}(r)$ with~$\varphi^{-1}(r) = \inf\{w\,:\,\varphi(w)\geq r\}$. 
	By monotonicity of~$\varphi$,
	\[
	\varphi(C_f^{-1}|\nabla f(x_0)\cdot v|) \leq \varphi(|v|) \leq \mathcal{L}(v).
	\]
	Hence by monotonicity of~$\psi_f$, we find~$|\nabla f(x_0)\cdot v| \leq \psi_f(\mathcal{L}(v))$. The map~$\mathcal{L}(v)$ is superlinear, because~$\mathcal{H}(p)$ is convex. Therefore~$s^{-1}\varphi(s)\to+\infty$ as~$s\to \infty$, and consequently~$r^{-1}\psi_f(r)\to 0$ as~$r\to \infty$.
	\medskip
	
	We finish the proof by verifying Item~\ref{item:proof-action-integral:item-c}. This is shown in~\cite[Lemma 3.2.3]{Kraaij2016} under the assumption of continuous differentiability of $\mathcal{H}(p)$, by solving a differential equation with a globally bounded vectorfield. Here, we verify Item~\ref{item:proof-action-integral:item-c} under the milder assumption of convexity of~$\mathcal{H}(p)$ by solving a suitable subdifferential equation. For~$p_0 \in \mathbb{R}^d$, define the subdifferential~$\partial \mathcal{H}(p_0)$ at~$p_0$ as the set
	$$
	\partial \mathcal{H}(p_0)
	:=
	\{\xi \in \mathbb{R}^d \; | \; \forall p \in \mathbb{R}^d\;: 
	\mathcal{H}(p)
	\geq
	\mathcal{H}(p_0) + \langle \xi, p - p_0\rangle\}.
	$$
	We shall solve for any $f \in C^1(\mathbb{T}^d)$ the subdifferential equation~$\dot{x} \in \partial \mathcal{H}(\nabla f(x))$. This means we show that for any initial condition~$x_0\in \mathbb{T}^d$, there exists an absolutely continuous path $x:[0,\infty)\rightarrow \mathbb{T}^d$ satisfying both~$x(0)=x_0$ and~$\dot{x}(t) \in \partial \mathcal{H}(\nabla f(x(t)))$ almost everywhere on $[0,\infty)$. Then \eqref{eq:action_integral:optimal_path_for_H} follows by noting that
	$
	\mathcal{H}(\nabla f(y))\geq \nabla f(y)\cdot v-\mathcal{L}(v)
	$
	for all $y \in \mathbb{T}^d$ and $v \in \mathbb{R}^d$, by convex duality. In particular, 
	$
	\mathcal{H}(\nabla f(x(s)))\geq \nabla f(x(s))\cdot \dot{x}(s)-\mathcal{L}(\dot{x}(s)),
	$
	and integrating gives one inequality in \eqref{eq:action_integral:optimal_path_for_H}. Regarding the other inequality, since $\dot{x}\in\partial \mathcal{H}(\nabla f(x))$, we know that for almost every $t \in [0,\infty)$ and for all $p\in\mathbb{R}^d$, we have
	$
	\mathcal{H}(p)
	\geq 
	\mathcal{H}(\nabla f(x(t)))+\dot{x}(t)\cdot(p-\nabla f(x(t))).
	$
	Therefore, a.e. on $[0,\infty)$,
	\begin{align*}
	\mathcal{H}(\nabla f(x(t))) &\leq \nabla f(x(t))\cdot\dot{x}(t)-\sup_{p\in\mathbb{R}^d}\left[p\cdot \dot{x}(t)-\mathcal{H}(p)\right] \\&= \nabla f(x(t))\cdot\dot{x}(t)-\mathcal{L}(\dot{x}(t)),
	\end{align*}
	and integrating gives the other inequality.
	\medskip
	
	For solving the subdifferential equation, define $F: \mathbb{R}^d \to 2^{ \mathbb{R}^d}$ by $F(x):=\partial \mathcal{H}(\nabla f(x))$, where the function $f \in C^1(\mathbb{T}^d)$ is regarded as a periodic function on $\mathbb{R}^d$. We apply Lemma 5.1 in \cite{De92} for solving $\dot{x}\in F(x)$. The conditions of Lemma 5.1 in the case of $\mathbb{R}^d$ are satisfied if the following holds: $\sup_{x \in \mathbb{R}^d} \|F(x)\|_{\text{sup}}$ is finite, for all $x\in \mathbb{R}^d$, the set $F(x)$ is non-empty, closed and convex, and the map $x\mapsto F(x)$ is upper semicontinuous.
	\medskip
	
	For $\xi \in F(x)$, note that for all $p\in\mathbb{R}^d$ 
	$
	\xi \cdot (p-\nabla f(x))\leq \mathcal{H}(p)-\mathcal{H}(\nabla f(x))
	$.
	Therefore, by shifting $p=p^\prime+\nabla f(x)$, we obtain for all $p^\prime\in\mathbb{R}^d$ that
	$
	\xi \cdot p^\prime\leq \mathcal{H}(p^\prime+\nabla f(x))-\mathcal{H}(\nabla f(x))
	$.
	By continuous differentiability and periodicity of~$f$, and continuity of~$\mathcal{H}$, the right-hand side is bounded in $x$, and we obtain
	\begin{align*}
	\sup_{x \in \mathbb{R}^d} \sup_{\xi \in F(x)}|\xi|
	&=
	\sup_{x \in \mathbb{R}^d} \sup_{\xi \in F(x)} \sup_{|p^\prime| = 1} \xi \cdot p^\prime\\
	&\leq 
	\sup_{x \in \mathbb{R}^d} \sup_{\xi \in F(x)} \sup_{|p^\prime|=1}\left[\mathcal{H}(p^\prime+\nabla f(x))-\mathcal{H}(\nabla f(x))\right] < \infty.
	\end{align*}
	For any $x \in \mathbb{R}^d$, the set $F(x)$ is non-empty, since the subdifferential of a proper convex function $\mathcal{H}(\cdot)$ is nonempty at points where $\mathcal{H}(\cdot)$ is finite and continuous (see e.g.~\cite[Th.~23.4]{rockafellar1966characterization}). Furthermore, $F(x)$ is convex and closed, which follows from the properties of a subdifferential set.
	\medskip
	
	Regarding upper semicontinuity, recall the definition from \cite{De92}: the map $F:\mathbb{R}^d \to 2^{\mathbb{R}^d} \setminus \{\emptyset\}$ is upper semicontinuous if for all closed sets $A\subseteq\mathbb{R}^d$, the set $F^{-1}(A)\subseteq \mathbb{R}^d$ is closed, where
	$
	F^{-1}(A)=\{x\in \mathbb{R}^d\;|\;F(x)\cap A\neq\emptyset\}.
	$
	Let $A\subseteq\mathbb{R}^d$ be closed and $x_n\rightarrow x$ in $\mathbb{R}^d$, with $x_n\in F^{-1}(A)$. That means for all $n\in\mathbb{N}$ that the sets
	$
	\partial \mathcal{H}(\nabla f(x_n))\cap A
	$
	are non-empty, and consequently, there exists a sequence $\xi_n \in F(x_n)\cap A$. We proved above that the set $F(y)\cap A$ is uniformly bounded in $y \in \mathbb{R}^d$. Hence the sequence~$\xi_n$ is bounded, and passing to a subsequence if necessary, it converges to some~$\xi$.
	By definition of $F(x_n)$, for all $p\in\mathbb{R}^d$,
	\begin{align*}
	\xi_n(p-\nabla f(x_n))
	\leq 
	\mathcal{H}(p)-\mathcal{H}(\nabla f(x_n)).
	\end{align*}
	Passing to the limit, we obtain that for all $p \in \mathbb{R}^d$,
	$$
	\xi (p-\nabla f(x))
	\leq 
	\mathcal{H}(p)-\mathcal{H}(\nabla f(x)).
	$$
	This implies by definition that $\xi\in\partial \mathcal{H}(\nabla f(x))$. Since $\xi_n\in A$ and $A$ is closed, we have $\xi\in A$. Hence $x\in F^{-1}(A)$, and $F^{-1}(A)$ is indeed closed.
\end{proof}
\begin{proof}[Proof of (ii) in Proposition \ref{prop:action_integral:bold_H}]
	The comparison principle for the operator $\mathbf{H}$ follows from the fact that~$\mathbf{H}f=\mathcal{H}(\nabla f)$ depends on~$x$ only via gradients. Indeed, for subsolutions $u_1$ and supersolutions $u_2$ of $(1-\tau \mathbf{H})u =h $, we have
	$
	\max(u_1-u_2)
	\leq 
	\tau [\mathcal{H}
	\left(
	\nabla f_1(x_\delta)
	\right)
	-
	\mathcal{H}\left(\nabla f_2(x_\delta^\prime)\right)]
	+
	h(x_\delta)-h(x_\delta^\prime),
	$
	with test functions $f_1,f_2\in\mathcal{D}(H)$ satisfying
	$
	\nabla f_1(x_\delta)
	=
	\nabla f_2(x_\delta^\prime),
	$
	and~$\text{dist}(x_\delta,x_\delta^\prime)\rightarrow 0$ as~$\delta \to 0$. Therefore $\mathcal{H}\left(\nabla f_1(x_\delta)\right)-\mathcal{H}\left(\nabla f_2(x_\delta^\prime)\right) = 0$, and~$\max(u_1-u_2)\leq 0$ follows by taking the limit $\delta\rightarrow 0$.
\end{proof} 
\begin{proof}[Proof of (iii) in Proposition \ref{prop:action_integral:bold_H}]
	Let $u \in C(\mathbb{T}^d)$ be a viscosity solution of the equation~$(1 - \tau H) u = h$. By Lemmas~\ref{lemma:LDP_switching_MP:local_op_strong_sol} and~\ref{lemma:LDP_switching_MP:H_to_H1-H2}, $u$ is a strong viscosity subsolution of $(1 - \tau H_1) u = h$ and a strong viscosity supersolution of $(1- \tau H_2) u = h$. In the proof of Lemma~\ref{lemma:LDP_switching_MP:H1_leq_H2} we obtained $\mathcal{H}_1 \leq \mathcal{H} \leq \mathcal{H}_2$, which in particular implies the inequalities 
	$
	- H_1 \geq -\mathbf{H} \geq -H_2.
	$
	With that, we find that $u$ is both a strong viscosity sub- and supersolution of $(1 - \tau \mathbf{H}) u = h$.
\end{proof}
\section{Proof of large deviations for molecular motors}
In this section, we consider the stochastic process~$(X^n, I^n)$ of Defintion~\ref{def:intro:cont_MM} and prove Theorems~\ref{thm:results:LDP_cont_MM_Limit_I} and~\ref{thm:results:LDP_cont_MM_Limit_II}. The generator~$L_n$ of~$(X^n, I^n)$ is given by
\begin{multline*}
L_n f(x,i)= \frac{1}{n} \frac{1}{2} \Delta_x f(\cdot,i) (x) +b^i(nx) \cdot \nabla_x f(\cdot,i) (x)\\
+ \sum_{j = 1}^J \gamma(n) r_{ij}(nx)
\left[
f(x,j) - f(x,i)
\right],
\end{multline*}
with state space $E_n = \mathbb{T}^d \times \{1,\dots, J\} = \{(x,i)\}$, drifts $b^i \in C^\infty(\mathbb{T}^d)$, jump rates $r_{ij} \in C^\infty(\mathbb{T}^d;[0,\infty))$, and~$\gamma(n)>0$. We frequently write $f(x,i) = f^i(x)$. The nonlinear generators defined by~$H_n f = \frac{1}{n} e^{-nf} L_n e^{nf(\cdot)}$ are given by
\begin{multline}\label{eq:LDP_MM:cont:H_varepsilon}
H_n f(x,i)= \frac{1}{n}\frac{1}{2} \Delta_x f^i(x)+\frac{1}{2}|\nabla_x f^i(x)|^2+b^i\left(nx\right)\nabla_x f^i(x) 
\\+ \frac{1}{n}\gamma(n) \sum_{j=1}^J r_{ij}\left(nx\right)
\left[
e^{ n\left(f(x,j)-f(x,i)\right)}-1
\right].
\end{multline}
\subsection{Proof of Theorem \ref{thm:results:LDP_cont_MM_Limit_I}}
\label{subsubsection:LDP_cont_MM_Limit_I}
\begin{proof}[Verification of~\ref{MM:item:T1} of Theorem \ref{thm:results:LDP_switching_MP}]
    Recall that~$\gamma(n)=n$.
	Choosing the functions~$f_n(x,i)=f(x)+\frac{1}{n}\,\varphi\left(nx, i\right))$, we find
	\begin{multline*}
	H_n f_n(x,i)
	=
	\frac{1}{n}\frac{1}{2}\Delta f(x)
	+
	\frac{1}{2}\Delta_y\varphi^i\left(nx\right)
	+
	\frac{1}{2}\big|\nabla f(x)+\nabla_y\varphi^i\left(nx\right)\big|^2
	\\+
	b^i\left(nx\right)\left(\nabla f(x)+\nabla_y\varphi^i\left(nx\right)\right)	\\
	+
	\sum_{j = 1}^J r_{ij}\left(nx\right)
	\left[
	e^{\varphi\left(nx, j\right)-\varphi\left(nx, i\right)}-1
	\right],
	\end{multline*}
	where~$\nabla_y$ and~$\Delta_y$ denote the gradient and Laplacian with respect to the variable~$y = nx$. The only term of order~$\frac{1}{n}$ that remains is~$\frac{1}{n}\,\Delta f(x)/2$. This suggests to take the remainder terms as the definition of the multivalued operator $H$. In the notation of Theorem \ref{thm:results:LDP_switching_MP}, we choose $E^\prime = \mathbb{T}^d \times\{1,\dots,J\}$ as the state space of the macroscopic variables, and define
	\begin{align}\label{MM:eq:limit-H-proof-cont-MM-model}
	H:=
	\left\{
	(f, H_{f,\varphi}) \, : \,
	f \in C^2(\mathbb{T}^d), \;
	H_{f, \varphi} \in C(\mathbb{T}^d \times E^\prime) \text{ and }
	\varphi \in C^2(E^\prime)
	\right\},
	\end{align}
	with the image functions~$H_{f,\varphi} : \mathbb{T}^d \times E^\prime \to \mathbb{R}$ defined by
	\begin{multline}
	H_{f,\varphi}(x,y,i)
	:=
	\frac{1}{2}\Delta_y\varphi^i(y) + \frac{1}{2} \big|\nabla f(x)+\nabla_y\varphi^i(y) \big|^2 +b^i(y)(\nabla f(x) + \nabla_y\varphi^i(y))
	\\+
	\sum_{j = 1}^J r_{ij}(y)\left[e^{\varphi(y, j)-\varphi(y, i)}-1\right],
	\label{eq:LDP_MM:contI:limit_op_H}
	\end{multline}
	where we write $\varphi = (\varphi^1, \dots, \varphi^J)$ via the identification $C^2(E^\prime) \simeq (C^2(\mathbb{T}^d))^J$.
	\medskip
	
	We now verify~\ref{MM:item:C1},~\ref{MM:item:C2} and~\ref{MM:item:C3} of~\ref{MM:item:T1}. For~\ref{MM:item:C1}, define the maps $\eta_n^\prime:E_n \to E^\prime$ by $\eta_n^\prime(x,i) := (nx,i)$, and recall that the maps~$\eta_n:E_n\to \mathbb{T}^d$ are the projections~$\eta_n(x,i) := x$. For any $(x,y,i) \in \mathbb{T}^d \times E^\prime$, we search for elements $(y_n,i_n) \in \mathbb{T}^d \times \{1,\dots,J\}$ such that both $\eta_n(y_n,i_n) \to x$ and $\eta_n^\prime(y_n,i_n) \to (y,i)$ as $n \to \infty$. 
	For $d=1$, the point $y_n := \frac{1}{n}(\lfloor nx \rfloor + y)$ satisfies $y_n \to x$ and $n y_n = y$ in $\mathbb{T}^d$ (i.e.\ modulo $1$). For $\geq2$ this construction can be done for each coordinate. Therefore,~\ref{MM:item:C1} holds with $y_n = \frac{1}{n}(\lfloor nx \rfloor + y)$ and $i_n = i$.
	\medskip
	
	Regarding Item~\ref{MM:item:C2}, let $(f, H_{f,\varphi}) \in H$. The function~$f_n$ defined by $f_n(x,i):=f(x)+\frac{1}{n}\,\varphi\left(nx, i\right)$ satisfies
	$$
	\|f\circ\eta_n-f_n\|_{L^\infty(E_n)}
	=
	\sup_{(x,i)\in E_n}|f(x)-f_n(x,i)|
	=
	\frac{1}{n}\cdot \|\varphi \|_{L^\infty(E_n)}\xrightarrow{n\rightarrow \infty}0,
	$$
	and
	\begin{align*}
	\|H_{f,\varphi}\circ\eta_n^\prime-H_n f_n\|_{L^\infty(E_n)}
	&=
	\sup_{(x,i)\in E_n}|H_{f,\varphi}(x,nx,i)-H_n f_n(x,i)|\\
	&= 
	\frac{1}{n}\frac{1}{2}\;\sup_{(x,i)\in E_n}|\;\Delta f(x)|\leq \frac{1}{n} \frac{1}{2}\sup|\Delta f|
	\xrightarrow{n\rightarrow \infty}0.
	\end{align*}
	Item~\ref{MM:item:C3}, the fact that the images $H_{f,\varphi}$ depend on $x$ only via the gradients of~$f$, can be recognized in~\eqref{eq:LDP_MM:contI:limit_op_H}.
\end{proof}
\begin{proof}[Verification of (T2) of Theorem \ref{thm:results:LDP_switching_MP}]
	Let $f$ be a function in $\mathcal{D}(H)=C^2(\mathbb{T}^d)$ and $x\in \mathbb{T}^d$. We establish the existence of a vector function $\varphi=(\varphi^1,\dots,\varphi^J)\in (C^2(\mathbb{T}^d))^J$ such that for all $(y,i) \in E^\prime = \mathbb{T}^d \times \{1,\dots,J\}$ and some constant $\mathcal{H}(\nabla f(x)) \in \mathbb{R}$, we have
	$$
	H_\varphi(\nabla f(x),y,i)
	=
	\mathcal{H}(\nabla f(x)).
	$$
	For the flat torus $E = \mathbb{T}^d$, this means that for fixed $\nabla f(x)=p\in\mathbb{R}^d$, we search for a vector function $\varphi_p$ such that $\tilde{H}_{\varphi_p}(p,y,i) = \mathcal{H}(p)$ becomes independent of the variables $(y,i)\in E^\prime$. We can find this vector function by solving a principal eigenvalue problem. We prove Item~\ref{MM:item:T2} with the following Lemma.
	\begin{lemma}
		Let $E^\prime = \mathbb{T}^d \times \{1, \dots, J\}$ and $H$ be the limit operator~\eqref{MM:eq:limit-H-proof-cont-MM-model}. Then:
		\begin{enumerate}[(a)]
			\item For $f \in \mathcal{D}(H)$, the limiting images $H_{\varphi}(\nabla f(x),y,i)$ are of the form 
			\[
			H_\varphi(\nabla f(x) , y, i)
			=
			e^{-\varphi(y,i)} \left[ (B_p + V_p + R)e^{\varphi} \right] (y,i),
			\]
			with $ p = \nabla f(x) \in\mathbb{R}^d$, and operators $B_p, V_p, R : C^2(E^\prime) \rightarrow C(E^\prime)$ defined as
			\begin{align*}
			(B_p h)(y,i) 
			&:=
			\frac{1}{2} \Delta_y h(y,i) + \left( p +b^i(y)\right)\cdot \nabla_y h(y,i) \\
			(V_p h)(y,i)
			&:=
			\left(\frac{1}{2} p^2 + p\cdot b^i(y)\right) h(y,i), \\
			(R\,h)(y,i)
			&:=
			\sum_{j = 1}^J r_{ij}(y) \left[h(y,j) - h(y,i)\right].
			\end{align*}
			\item 
			For any $p \in \mathbb{R}^d$, there exists an eigenfunction $g_p = (g_p^1,\dots, g_p^J) \in (C^2(\mathbb{T}^d))^J$ with strictly positive component functions, $g^i_p > 0 $ on $\mathbb{T}^d$ for $ i =1,\dots, J$, and an eigenvalue $\mathcal{H}(p) \in \mathbb{R}$ such that
			\begin{equation}
			\left[B_p + V_p + R\right] g_p 
			=
			\mathcal{H}(p)\,g_p.
			\label{eq:LDP_MM:contI:cell_problem}
			\end{equation}
		\end{enumerate}
		\label{lemma:LDP_MM:contI:principal_eigenvalue}
	\end{lemma}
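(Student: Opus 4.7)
The plan for Lemma \ref{lemma:LDP_MM:contI:principal_eigenvalue} is to first execute part (a) as a direct chain-rule calculation, then invoke principal-eigenvalue theory for cooperative elliptic systems on the torus for part (b), and combine these for part (c).

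For part (a), I would apply the operators directly to $e^\varphi$. Using $\nabla_y e^\varphi = e^\varphi \nabla_y \varphi$ and $\Delta_y e^\varphi = e^\varphi(\Delta_y \varphi + |\nabla_y \varphi|^2)$, one computes
$$
e^{-\varphi(y,i)} (B_p e^\varphi)(y,i) = \tfrac{1}{2}\Delta_y \varphi^i(y) + \tfrac{1}{2}|\nabla_y \varphi^i(y)|^2 + (p - \nabla_y \psi^i(y))\cdot \nabla_y \varphi^i(y),
$$
and $e^{-\varphi} V_p e^\varphi = \tfrac{1}{2}|p|^2 - p \cdot \nabla_y \psi^i$. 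Adding these two and completing the square reproduces exactly $\tfrac{1}{2}\Delta_y \varphi^i + \tfrac{1}{2}|p + \nabla_y \varphi^i|^2 - \nabla_y \psi^i \cdot (p + \nabla_y \varphi^i)$, which are the first three terms of $H_{f,\varphi}$ in \eqref{eq:LDP_MM:contI:limit_op_H} once $p = \nabla f(x)$. The reaction part is immediate from the definition of $R$: $e^{-\varphi(y,i)}(R e^\varphi)(y,i) = \sum_j r_{ij}(y)\bigl[e^{\varphi(y,j) - \varphi(y,i)} - 1\bigr]$. Matching term by term gives the identity claimed in (a).

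Part (b) is the substantive step and the main obstacle. The operator $\mathcal{A}_p := B_p + V_p + R$ is a coupled system of $J$ second-order elliptic operators on $\mathbb{T}^d$: the principal-part differential action $B_p$ and the multiplication by $V_p$ act component-wise, while $R$ provides a bounded zero-order coupling with a cooperative sign pattern, since its off-diagonal entries are the nonnegative rates $r_{ij}(y)$ for $i\neq j$, and its diagonal is $-\sum_{j \neq i} r_{ij}(y)$. By the maximum principle for such cooperative systems, the resolvent $(\lambda I - \mathcal{A}_p)^{-1}$ is positivity-preserving on $(C(\mathbb{T}^d))^J \simeq C(E')$ for sufficiently large $\lambda$. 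The irreducibility hypothesis (i) from Theorem \ref{thm:results:LDP_cont_MM_Limit_I} then upgrades positivity to strong positivity, and compactness of the resolvent (by elliptic regularity together with compactness of $\mathbb{T}^d$) allows the Krein--Rutman theorem to deliver a simple real principal eigenvalue $\mathcal{H}(p) \in \mathbb{R}$ with a strictly positive eigenfunction $g_p \in (C(\mathbb{T}^d))^J$. Elliptic regularity then promotes $g_p$ componentwise to $C^2(\mathbb{T}^d)$. The technical heart is to verify that the irreducibility assumption imposed on the $J \times J$ matrix of supremum rates is strong enough to trigger the strong positivity required by Krein--Rutman; this is exactly the input for which Sweers' treatment \cite{Sweers92} of coupled elliptic systems is designed, so I would reduce the claim to his framework.

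Part (c) is then immediate: since the eigenfunction $g_p$ is \emph{strictly} positive, $\varphi_p := \log g_p$ is a well-defined element of $(C^2(\mathbb{T}^d))^J$, and substituting $\varphi = \varphi_p$ into the identity from (a) yields $H_{\varphi_p}(p,y,i) = g_p(y,i)^{-1}(B_p + V_p + R)g_p(y,i) = \mathcal{H}(p)$, uniformly in $(y,i) \in E'$, which is exactly the constancy required by condition (T2) of Theorem \ref{thm:results:LDP_switching_MP}.
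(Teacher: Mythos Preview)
Your proposal is correct and follows essentially the same approach as the paper: part (a) by direct computation/regrouping, part (b) by casting $[B_p+V_p+R]$ as a weakly coupled cooperative elliptic system on $\mathbb{T}^d$ and invoking Sweers' principal-eigenvalue result (via Krein--Rutman, under the irreducibility hypothesis), and part (c) as an immediate consequence. The paper is slightly terser---it simply cites Proposition~\ref{proposition:appendix:PrEv:fully_coupled_system} for (b)---but the underlying argument is exactly the one you sketch.
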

	Now~(T2) follows by (a) and (b), since with $\varphi_p := \log g_p$,
	\begin{align*}
	H_{\varphi_p}(p , y, i)
	&\overset{(a)}{=}
	e^{-\varphi_p(y,i)} \left[ B_p + V_p + R \right] e^{\varphi_p(y,i)}
	\\&=
	\frac{1}{g_p(y,i)} \left[B_p + V_p + R\right] g_p (y,i)
	\overset{(b)}{=}
	\mathcal{H}(p).
	\end{align*}
	\emph{Proof of Lemma \ref{lemma:LDP_MM:contI:principal_eigenvalue}.} Writing $p = \nabla f(x)$, Item~(a) follows directly by regrouping the terms in~\eqref{eq:LDP_MM:contI:limit_op_H}. Regarding Item~(b),~$\left[B_p + V_p +R\right] g_p = \mathcal{H}(p) g_p$ is a system of weakly-coupled nonlinear elliptic PDEs on the flat torus. 
	They are weakly coupled in the sense that the component functions $g_p^i$ are only coupled in the lowest order terms by means of the operator $R$, while the operators $B_p$ and $V_p$ act solely on the diagonal. 
	By Proposition~\ref{proposition:appendix:PrEv:fully_coupled_system}, there exists a $\lambda(p)$ and $g_p > 0$ such that $\left[-B_p -V_p - R \right] g_p = \lambda(p) g_p$. Thereby, $\left[B_p + V_p + R \right] g_p = \mathcal{H}(p) g_p$ follows with the same eigenfunction $g_p > 0$ and the principal eigenvalue $\mathcal{H}(p) = - \lambda(p)$. This finishes the verification of~\ref{MM:item:T2}.
\end{proof}
\begin{proof}[Verification of (T3) of Theorem \ref{thm:results:action_integral_representation}]
	We prove that the principal eigenvalue $\mathcal{H}(p)$ of Lemma \ref{lemma:LDP_MM:contI:principal_eigenvalue} is convex in $p\in\mathbb{R}^d$ and satisfies $\mathcal{H}(0)=0$.
	%
	By Proposition~\ref{proposition:appendix:PrEv:fully_coupled_system}, the eigenvalue 
	$\mathcal{H}(p) = - \lambda(p)$ admits the representation
	\begin{align*}
	\mathcal{H}(p)
	&=
	- \sup_{g > 0} \inf_{z^\prime \in E^\prime} 
	\left\{ 
	\frac{1}{g(z^\prime)}\left[
	(-B_p - V_p - R)g
	\right](z^\prime) 
	\right\} \\
	&=
	\inf_{g > 0} \sup_{z^\prime \in E^\prime}
	\left\{
	\frac{1}{g(z^\prime)}
	\left[
	(B_p + V_p + R)g
	\right](z^\prime)
	\right\} \\
	&=
	\inf_{\varphi} \sup_{z^\prime \in E^\prime}
	\left\{
	e^{-\varphi(z^\prime)}
	\left[(B_p + V_p + R)e^{\varphi}
	\right] (z^\prime)
	\right\}
	=:
	\inf_{\varphi} \sup_{z^\prime \in E^\prime} F(p,\varphi)(z^\prime),
	\end{align*}
	with a map $F$ defined by
	\begin{multline*}
	F(p,\varphi)(y,i)
	:=
	\frac{1}{2}\Delta\varphi^i(y)
	+
	\frac{1}{2} |\nabla\varphi^i(y)+p|^2
	+
	b^i(y)(\nabla\varphi^i(y)+p)
	\\+
	\sum_{j = 1}^J r_{ij}(y)
	\left[ e^{\varphi^j(y)-\varphi^i(y)}-1 \right],
	\end{multline*}
	The map~$F$ is jointly convex in $p$ and $\varphi$. For the eigenfunction $\varphi = \varphi_p$, equality holds in the sense that for any $z \in E^\prime$, we have $\mathcal{H}(p) = F(p,\varphi_p)(z)$. Therefore, we obtain for $\tau\in[0,1]$ and any $p_1, p_2 \in \mathbb{R}^d$ with corresponding eigenfunctions $g_1 = e^{\varphi_1}$ and $g_2 = e^{\varphi_2}$ that
	\begin{align*}
	\mathcal{H}(\tau p_1 + (1-\tau) p_2)
	&=
	\inf_\varphi\sup_{E^\prime}F\left(\tau p_1 + (1-\tau) p_2, \varphi\right)\\
	&\leq 
	\sup_{E^\prime}F\left(\tau p_1 + (1-\tau) p_2, \tau \varphi_1+(1-\tau)\varphi_2\right)\\
	&\leq 
	\sup_{E^\prime}\left[\tau F(p_1,\varphi_1) + (1-\tau) F(p_2,\varphi_2)\right]\\
	&\leq 
	\tau \sup_{E^\prime}F(p_1,\varphi_1) + (1-\tau) \sup_{E^\prime} F(p_2,\varphi_2) \\
	&= 
	\tau \mathcal{H}(p_1) + (1-\tau) \mathcal{H}(p_2).
	\end{align*}
	Regarding the claim $\mathcal{H}(0) = 0$, we choose the constant function $\varphi = (1,\dots,1)$ in the variational representation of $\mathcal{H}(p)$. Thereby, we obtain the estimate $\mathcal{H}(0) \leq 0$. For the opposite inequality, we show that for any $\varphi \in C^2(E^\prime)$ 
	$$
	\lambda(\varphi) := \sup_{z^\prime \in E^\prime}
	\left\{
	e^{-\varphi(z^\prime)}
	\left[
	(B_0 + V_0 + R) e^{\varphi}
	\right](z^\prime)
	\right\}
	\geq 0,
	$$
	which then implies $\mathcal{H}(0) = \inf_{\varphi} \lambda(\varphi) \geq 0$. Let $\varphi\in C^2(E^\prime)$; the continuous function $\varphi$ on the compact set $E^\prime$ admits a global minimum $z_m = (y_m, i_m) \in E^\prime$. Thereby, noting that $V_0 \equiv 0$, we find
	\begin{multline*}
	\lambda(\varphi) \geq e^{-\varphi(z_m)}(B_0 + R) e^{\varphi(z_m)}
	=
	\underbrace{\frac{1}{2} \Delta_y \varphi(y_m,i_m)}_{ \displaystyle \geq 0} 
	+
	\frac{1}{2}|\underbrace{\nabla_y \varphi(y_m,i_m)}_{\displaystyle = 0}|^2
	\\+
	b^{i_m}(y_m)\cdot \underbrace{\nabla_y \varphi(y_m,i_m)}_{\displaystyle = 0}
	+
	\sum_{j\neq i} r_{ij}(y_m) 
	\underbrace{\left[ e^{\varphi(y_m,j) - \varphi(y_m,i_m)} - 1 \right]}_{\displaystyle \geq 0}
	\geq
	0.
	\end{multline*}
	%
	This finishes the verification of (T3), and thereby the proof of Theorem \ref{thm:results:LDP_cont_MM_Limit_I}.
\end{proof}
\subsection{Proof of Theorem \ref{thm:results:LDP_cont_MM_Limit_II}}
\label{subsubsection:LDP_cont_MM_Limit_II}	
In this section, we consider the process $(X^n, I^n)$ of Definition~\ref{def:intro:cont_MM} in the limit regime~$\frac{1}{n}\gamma(n) \to \infty$ as~$n \to \infty$. As above in the proof of Theorem \ref{thm:results:LDP_cont_MM_Limit_I}, we start with the nonlinear generator~$H_\varepsilon$ given by~\eqref{eq:LDP_MM:cont:H_varepsilon}, and verify Conditions (T1), (T2) and (T3) of Theorems \ref{thm:results:LDP_switching_MP} and \ref{thm:results:action_integral_representation}.
\begin{proof}[Verification of (T1) of Theorem \ref{thm:results:LDP_switching_MP}]
	We choose functions $f_n(x,i)$ of the form
	$$
	f_n(x,i)
	=
	f(x)
	+
	\frac{1}{n} \, \varphi \left(nx\right)
	+
	\frac{1}{\gamma(n)} \, \xi\left(nx,i
	\right).
	$$
	We abbreviate $y=nx$ in the following equation. Computing~$H_n f_n$ results in
	\begin{multline*}
	H_n f_n (x,i)
	=
	\frac{1}{n}\frac{1}{2}\Delta f(x)
	+
	\frac{1}{2}\left[\Delta\varphi(y)+\frac{n}{\gamma(n)}\Delta\xi^i(y)\right]
	\\+
	\frac{1}{2}
	\big|
	\nabla f(x)+\nabla\varphi(y)+\frac{n}{\gamma(n)}\nabla\xi^i(y)
	\big|^2
	+
	b^i(y)\left(\nabla f(x)+\nabla\varphi(y)
	+
	\frac{n}{\gamma(n)} \nabla\xi^i(y)\right) 
	\\+
	\frac{1}{n}\gamma(n)\sum_{j = 1}^J r_{ij}(y)\left[e^{n(\xi(y,j)-\xi(y,i))/\gamma(n)}-1\right].
	\end{multline*}
	The~$n/\gamma(n)$ terms vanish as~$n\rightarrow\infty$. The last term satisfies
	$$
	\frac{1}{n}\gamma \sum_{j = 1}^J r_{ij}(y) \left[
	e^{n(\xi^j-\xi^i)/\gamma}-1
	\right]
	=
	\sum_{j = 1}^Jr_{ij}(y) \left[
	\xi^j(y)-\xi^i(y)
	\right]
	+
	o_{n\to\infty}(1).
	$$
	Therefore, we choose again $E^\prime := \mathbb{T}^d \times \{1,\dots, J\}$ as the state space of the macroscopic variables, and use the following limit operator $H$,
	\begin{align}\label{MM:eq:multi-H-proof-cont-averaged}
	H:=
	\left\{
	(f, H_{f,\varphi,\xi} \, : \,
	f \in C^2(\mathbb{T}^d) \text{ and } 
	H_{f, \varphi,\xi} \in C(\mathbb{T}^d \times E^\prime)
	\right\},
	\end{align}
	with functions $\varphi$ and $\xi$ in the sets $\varphi \in C^2(\mathbb{T}^d)$ and $\xi = (\xi^1, \dots, \xi^J) \in C^2(E^\prime) \simeq (C^2(\mathbb{T}^d))^J$. The image functions $H_{f,\varphi,\xi} : \mathbb{T}^d \times \mathbb{T}^d \times \{1,\dots,J\} \to \mathbb{R}$ are
	\begin{multline}
	H_{f,\varphi,\xi}(x,y,i)
	:=
	\frac{1}{2}\Delta_y\varphi(y)
	+
	\frac{1}{2} |\nabla f(x)+\nabla_y\varphi(y)|^2
	+
	b^i(y)
	\left(\nabla f(x)+\nabla_y\varphi(y)\right) \\
	+
	\sum_{j = 1}^J r_{ij}(y)\left[\xi(y,j)-\xi(y,i)\right].
	\label{eq:LDP_MM:contII:limit_op_H}
	\end{multline}
	Then $H$ satisfies (T1), which is shown by the same line of argument as above in the proof of Theorem \ref{thm:results:LDP_cont_MM_Limit_I}, with the same maps~$\eta_n$ and $\eta_n^\prime$. The image functions depend only on gradients,~$H_{f,\varphi,\xi}(x,y,i)=H_{\varphi,\xi}(\nabla f(x),y,i)$.
\end{proof}
\begin{proof}[Verification of (T2) of Theorem \ref{thm:results:LDP_switching_MP}]
	For any $p \in \mathbb{R}^d$, we establish the existence of functions $\varphi_p \in C^2(\mathbb{T}^d)$ and $\xi \in C^2(E^\prime)$ such that $H_{\varphi, \xi}(p,\cdot)$ becomes constant on $E^\prime = \mathbb{T}^d \times \{1,\dots,J\}$. To that end, we find a constant $\mathcal{H}(p) \in \mathbb{R}$ and $\varphi_p$ and $\xi_p$ such that for all $(y,i) \in E^\prime$, we have
	$$
	H_{\varphi_p,\xi_p}(p,y,i) = \mathcal{H}(p).
	$$
	We reduce the problem to finding a principal eigenvalue.
	\begin{lemma}
		Let $E^\prime = \mathbb{T}^d \times \{1,\dots,J\}$ and let~$H$ be the operator~\eqref{MM:eq:multi-H-proof-cont-averaged}. Then:
		\begin{enumerate}[(a)]
			\item For $f \in \mathcal{D}(H)$, the images $H_{\varphi,\xi}$ are given by
			\[
			H_{\varphi,\xi}(p,y,i)
			=
			e^{-\varphi(y)}\left[ (B^i_{p} + V^i_{p}) e^{\varphi}\right](y)
			+
			\sum_{j = 1}^J r_{ij}(y)\left[\xi(y,j)-\xi(y,i)\right],
			\]
			where $p = \nabla f(x) \in \mathbb{R}^d$, $B_p^i = \frac{1}{2}\Delta + (p + b^i(y))\cdot\nabla$ and multiplication operator~$V_p^i(y) = p^2/2 + p\cdot b^i(y)$.
			\item For any $\varphi$ and $y\in \mathbb{T}^d$, there exists a function $\xi(y,\cdot)$ on $\{1,\dots,J\}$ such that $\xi \in C^2(E^\prime)$ and for all $i=1,\dots,J$,
			\[
			e^{-\varphi}\left[ (B^i_{p} + V^i_{p}) e^{\varphi}\right](y)
			+
			\sum_{j = 1}^J r_{ij}(y)\left[\xi(y,j)-\xi(y,i)\right]
			=
			e^{-\varphi(y)} \left[ B_p + V_p \right] e^{\varphi(y)},
			\]
			where $B_p = \frac{1}{2}\Delta + (p + \overline{b}(y))\cdot\nabla$, $V_p(y) = \frac{p^2}{2} + p\cdot\overline{b}(y)$. In the operators, $\overline{b}(y) := \sum_{i = 1}^J \mu_y(i)b^i(y)$ is the average drift with respect to the stationary measure $\mu_y \in \mathcal{P}(\{1, \dots, J\})$ of the jump process with frozen jump rates~$r_{ij}(y)$.
			\item There exists a strictly positive eigenfunction $g_p$ and an eigenvalue $\mathcal{H}(p) \in \mathbb{R}$ such that
			\begin{equation}
			\left[ B_p + V_p \right] g_p = \mathcal{H}(p) g_p.
			\label{eq:LDP_MM:contII:PrEv_eq}
			\end{equation}
		\end{enumerate}
		\label{lemma:LDP_MM:contII:principal_eigenvalue}
	\end{lemma}
	By (a), (b) and (c), taking $\varphi_p = \log g_p$ and the corresponding $\xi(y,i)$, we obtain~(T2) via
	\begin{align*}
	H_{\varphi_p,\xi}(p,y,i)
	&\overset{(a)}{=}
	e^{-\varphi_p(y)}\left[ B^i_{p} + V^i_{p}\right] e^{\varphi_p(y)}
	+
	\sum_{j\in\mathcal{J}}r_{ij}(y)\left[\xi(y,j)-\xi(y,i)\right]
	\\&\overset{(b)}{=}
	e^{-\varphi_p(y)} \left[ (B_p + V_p)e^{\varphi} \right] (y)
	\overset{(c)}{=}
	\mathcal{H}(p).
	\end{align*}
	\emph{Proof of Lemma \ref{lemma:LDP_MM:contII:principal_eigenvalue}.} Regarding (a), writing $\xi(y,i)=\xi_y(i)$ and $p = \nabla f(x)\in\mathbb{R}^d$, for all $(y,i)\in E^\prime$ we find
	\begin{align*}
	H_{\varphi,\xi}(p,y,i)
	&=
	\underbrace{\frac{1}{2}\Delta\varphi + \frac{1}{2} \big|p + \nabla\varphi \big|^2 + b^i(p+\nabla\varphi)}_{\displaystyle=e^{-\varphi}(B_{p,i}+V_{p,i})e^{\varphi}} 
	+
	\underbrace{\sum_{j = 1}^J r_{ij}(y)
		\left[
		\xi(y,j)-\xi(y,i)
		\right]}_{\displaystyle =:R_y \xi(y,\cdot)(i)},
	\end{align*}			
	with a generator $R_y$ of a jump process with frozen jump rates~$r_{ij}(y)$.
	\medskip
	
	For (b), let $\varphi \in C^2(\mathbb{T}^d)$ and $y\in \mathbb{T}^d$. We wish to find a function~$\xi_y(\cdot) = \xi(y,\cdot) \in C(\{1,\dots,J\})$ such that
	$$
	e^{-\varphi}\left[B_{p,i}+V_{p,i}\right]e^{\varphi} + R_y \xi_y(i)
	$$
	becomes constant in $i = 1,\dots, J$. By the Fredholm alternative, for any vector~$h\in C(\{1,\dots,J\})$, the equation $R_y\xi_y = h$ has a solution $\xi_y(\cdot)\in C(\{1,\dots,J\})$ if and only if $h \perp \text{ker}(R_y^\ast)$. Since $R_y$ is the generator of a jump process on the finite discrete set $\{1, \dots, J\}$ with rates $r_{ij}(y)$, the null space $\text{ker}(R_y^\ast)$ is one-dimensional and spanned by the unique stationary measure $\mu_y\in\mathcal{P}(\{1,\dots,J\})$, which exists by our irreducibility assumption of Theorem~\ref{thm:results:LDP_cont_MM_Limit_II} (e.g.~\cite[Theorem~17.51]{klenke2013probability}).
	Hence
	$
	e^{-\varphi}\left[B_{p,i}+V_{p,i}\right]e^{\varphi} + R_y \xi_y(i)
	=
	h(p,y)
	$
	is independent of $i \in \{1,\dots,J\}$ if and only if
	\[
	\sum_{i = 1}^J \mu_y(i)\left[(h(p,y)-e^{-\varphi}\left[B_{p,i}+V_{p,i}\right]e^{\varphi}\right]=0.
	\]
	This solvability condition leads to
	\begin{align*}
	\sum_{i = 1}^J \mu_y(i)
	\left[(h(p,y)
	-
	e^{-\varphi}
	\left( B_{p,i}+V_{p,i}\right)
	e^{\varphi}
	\right]
	&=
	h(p,y)-
	e^{-\varphi(y)}\left(
	B_p + V_p
	\right)
	e^{\varphi(y)}
	=
	0.
	\end{align*}
	Hence for $h(p,y) := e^{-\varphi(y)}\left[ B_p + V_p \right]e^{\varphi(y)}$, there exists~$\xi(y,i)$ solving the equation~$R_y \xi(y,\cdot) = h$. Furthermore, since the stationary measure is an eigenvector of a one-dimensional eigenspace, and the rates $r_{ij}(\cdot)$ are smooth by assumption, the eigenfunctions $\xi_y$ depend smoothly on $y$ as well, and (b) follows.
	\medskip
	
	For proving (c) in Lemma~\ref{lemma:LDP_MM:contII:principal_eigenvalue}, we note that Equation~\eqref{eq:LDP_MM:contII:PrEv_eq} corresponds to a principal-eigenvalue problem for a second-order uniformly elliptic operator. By Proposition~\ref{proposition:appendix:PrEv:elliptic_op}, the principal eigenvalue problem $\left[ - B_p - V_p \right] g _p = \lambda(p) g_p$ has a solution $g_p > 0$, with eigenvalue $\lambda(p) \in \mathbb{R}$. The same function $g_p$ and the eigenvalue $\mathcal{H}(p) = -\lambda(p)$ solve~\eqref{eq:LDP_MM:contII:PrEv_eq}.
\end{proof}
\begin{proof}[Verification of (T3) of Theorem \ref{thm:results:action_integral_representation}]
	The principal eigenvalue $\mathcal{H}(p)$ is of the form
	$$
	\mathcal{H}(p)
	=
	\inf_\varphi\sup_{y\in \mathbb{T}^d} F\left(p,\varphi\right)(y),
	$$
	with $F$ jointly convex in $p$ and $\varphi$. Convexity of~$\mathcal{H}(p)$ and~$\mathcal{H}(0) = 0$ follow as above in the proof of Theorem~\ref{thm:results:LDP_cont_MM_Limit_I}.
\end{proof}
\section{Proof of symmetry of Hamiltonians}
In Theorem~\ref{thm:results:detailed_balance_limit_I}, we proved that detailed-balance implies symmetric Hamiltonians. The proof was based on a suitable variational representation of the Hamiltonian. In this section, we show in Proposition~\ref{prop:results:detailed_balance_limit_I} how to obtain this representation.
\medskip

Before giving the rigorous proof, we sketch the argument. To that end, we recall the setting. We work with~$E^\prime = \mathbb{T}^d \times \{1, \dots, J\}$ and denote by~$\mathcal{P}(E^\prime)$ the set of probability measures on $E^\prime$. The Hamiltonian~$\mathcal{H}(p)$ is the principal eigenvalue of the cell problem~\eqref{eq:LDP_MM:contI:cell_problem} described in Lemma~\ref{lemma:LDP_MM:contI:principal_eigenvalue}, and satisfies
\begin{equation}
\mathcal{H}(p)
=
\sup_{\mu\in\mathcal{P}(E^\prime)}
\left[
\int_{E^\prime} V_p(z) \,\dd \mu(z) - I_p(\mu)
\right].
\label{eq:results:LDP_MM:DV_var_rep_H(p)}
\end{equation}
In this formula, we have the continuous map~$V_p$ given by
\begin{equation}\label{MM:eq:function-V-in-Hamiltonian}
V_p (x,i) := \frac{1}{2} p^2 - p\cdot \nabla\psi^i(x),
\end{equation}
and the Donsker-Varadhan functional
\begin{equation}
I_p(\mu):=-\inf_{u > 0}\int_{E^\prime} \frac{L_p u}{u}\,\dd \mu,
\label{eq:results:LDP_MM:DV_functional}
\end{equation}
where the infimum is over strictly positive~$u\in C^2(E')$ and the operator~$L_p$ is
\begin{equation}
L_p u(x,i) 
:=
\frac{1}{2} \Delta_x u(x,i) + (p-\nabla \psi_i(x))\cdot \nabla_x u(x,i)
+
\sum_{j = 1}^J r_{ij}(x) \left[u(x,j) - u(x,i)\right].
\label{eq:results:LDP_MM:L_p_in_DV_functional}
\end{equation}
The variational representation~\eqref{eq:results:LDP_MM:DV_var_rep_H(p)} is a special case of Donsker's and Varadhan's representation theorem on principal eigenvalues~\cite{DonskerVaradhan75}.
Under their general conditions, the infimum is taken over functions that are in the domain of the infinitesimal generator of the semigroup generated by~$L_p$. Pinsky showed that the infimum can be taken over~$C^2$ functions if the coefficients appearing in the operator~$L_p$ are sufficiently regular (Theorem~1.4 in~\cite{pinsky1985evaluating}, Equation~(3.1) in~\cite{pinsky2007regularity}).
\medskip

Since it is not clear from~\eqref{eq:results:LDP_MM:DV_var_rep_H(p)} that~$\mathcal{H}(p)$ is symmetric under the detailed-balance condition, we shall perform a suitable shift in the infimum of the functional~\eqref{eq:results:LDP_MM:DV_functional} to obtain a suitable representation. Rewriting in~\eqref{eq:results:LDP_MM:DV_functional} the strictly positive functions as~$u=\exp(\varphi)$, we find
\begin{equation*}
I_p(\mu)=-\inf_{\varphi} \sum_i \int\left[
\frac{1}{2}\Delta\varphi_i + \frac{1}{2}|\nabla\varphi_i|^2
+(p-\nabla\psi_i) \nabla\varphi_i+\sum_j r_{ij}\left(e^{\varphi_j-\varphi_j}-1\right)\right]\dd\mu_i.
\end{equation*}
Suppose that~$\dd\mu_i=\overline{\mu}_i\,\dd x$ with strictly positive~$\overline{\mu}_i$, where~$\dd x$ is the Lebesgue measure on the torus. Then shifting in the infimum as~$\varphi_i\to\varphi_i+\psi_i+\frac{1}{2}\log\overline{\mu}_i$, we find by calculation that
\begin{equation}
\label{MM:eq:DV-functional-after-shift}
I_p(\mu)=\mathcal{R}(\mu)+\int_{E^\prime} V_p \, \dd\mu - K_p(\mu),
\end{equation}
where~$\mathcal{R}(\mu)$ is the Fisher information given by
\begin{equation}
\label{MM:eq:relative-Fisher-information}
\mathcal{R}(\mu) := \frac{1}{8}\sum_i\int_{\mathbb{T}^d}\left|\nabla \left(\log\frac{\overline{\mu}_i}{e^{-2\psi_i}}\right)\right|^2\,\dd\mu_i,
\end{equation}
and~$K_p(\mu)$ is given by
\begin{multline}
K_p(\mu) = \inf_{\phi}\bigg\{\sum_{i = 1}^J \int_{\mathbb{T}^d}\left(
\frac{1}{2}|\nabla \phi_i(x) + p|^2 
-\sum_{j = 1}^J r_{ij}(x)\right) \,\dd \mu_i(x)\\
+\sum_{i, j = 1}^J \int_{\mathbb{T}^d}r_{ij}(x) e^{-2\psi_i(x)}\sqrt{\overline{\mu}_i(x) \overline{\mu}_j(x)}
e^{\psi_j(x) + \psi_i(x)} e^{\phi(x,j) - \phi(x,i)}
\,\dd x\bigg\}.
\label{eq:results:LDP_MM:K_p(mu)_I}
\end{multline}
Plugging formula~\eqref{MM:eq:DV-functional-after-shift} into the variational representation~\eqref{eq:results:LDP_MM:DV_var_rep_H(p)} leads to the desired representation of the Hamiltonian. The transformation we used is equivalent to shifting by~$(1/2)\log(\overline{\mu}_i/\pi_i)$, where~$\pi_i=e^{-2\psi_i}$ is the stationary measure up to multiplicative constant. This transformation is reminiscent of a \emph{symmetrization} discussed in Touchette's notes~\cite[Eq.~(36)]{Touchette2018}. 
Also when formulating the detailed-balance condition with additional constants in~\eqref{MM:eq:detailed-balance}, that is when not shifting the potentials by constants to renormalize, one can include these constants in the shift to arrive at the same conclusions.
\medskip

In order to make the strategy as outlined above rigorous, we prove that we can restrict to measures~$\mu$ having the required regularity properties. The central step is to exploit the fact that~$I_p(\mu)$ is finite since~$\mathcal{H}(p)$ is finite. By a result of Stroock~\cite[Theorem 7.44]{stroock2012introduction}, finiteness of the Donsker-Varadhan functional implies certain regularity properties in case the generator is reversible. Since the generator~$L_p$ is not reversible, we further bound~$I_p$ by a suitable Donsker-Varadhan functional~$I_\mathrm{rev}$ corresponding to a reversible process in order to be able to apply~\cite[Theorem 7.44]{stroock2012introduction}.
\begin{proposition}
	The Hamiltonian~$\mathcal{H}(p)$ given by~\eqref{eq:results:LDP_MM:DV_var_rep_H(p)} satisfies the following:
	\begin{enumerate}[(a)]
		\item
		The supremum in~\eqref{eq:results:LDP_MM:DV_var_rep_H(p)} can be taken over a smaller set $\mathbf{P}$ of measures, that is
		$$
		\mathcal{H}(p)
		=
		\sup_{\mu\in\mathbf{P}}
		\left[
		\int_{E^\prime} V_p \,\dd\mu - I_p(\mu)
		\right],
		$$
		where
		$ \mathbf{P} \subset \mathcal{P}(E^\prime)$ are the probability measures $\mu = (\mu_1,\dots,\mu_J)$ such that:
		\begin{enumerate}[(P1)]
			\item 
			Each $\mu_i$ is absolutely continuous with respect to the uniform measure on $\mathbb{T}^d$.
			\item 
			For each~$i$, we have~$\nabla (\log \overline{\mu}_i) \in L^2_{\mu_i}(\mathbb{T}^d)$, where $\dd\mu_i(x) = \overline{\mu}_i(x) \dd x$.
		\end{enumerate}
		\item
		We have
		\begin{equation}
		\mathcal{H}(p)
		=
		\sup_{\mu \in \mathbf{P}}
		\left[
		K_p(\mu) - \mathcal{R}(\mu)
		\right],
		\label{eq:results:LDP_MM:new_representation_H(p)}
		\end{equation}
		with the maps~$\mathcal{R}$ and~$K_p$ given by~\eqref{MM:eq:relative-Fisher-information} and~\eqref{eq:results:LDP_MM:K_p(mu)_I} above. In~$K_p(\mu)$, the infimum can be taken over vectors of functions $\phi_i = \phi(\cdot,i)$ such that $\nabla \phi_i \in L^{2}_{\mu_i}(\mathbb{T}^d)$.
		\item
		Under the detailed balance condition,
		\begin{multline}
		K_p(\mu) 
		= 
		\inf_{\phi}\bigg\{
		\sum_{i = 1}^J \int_{\mathbb{T}^d}
		\left(
		\frac{1}{2}|\nabla \phi^i(x) + p|^2 
		-
		\sum_{j = 1}^J r_{ij}(x)
		\right) \,
		\dd \mu_i(x)	\\
		+
		\sum_{i, j = 1}^J \int_{\mathbb{T}^d} r_{ij}(x)e^{-2\psi_i(x)}
		\sqrt{\overline{\mu}_i(x) \overline{\mu}_j(x)}
		e^{\psi_j(x) + \psi_i(x)} \cosh{(\phi(x,j) - \phi(x,i))}
		\,\dd x
		\bigg\}.
		\label{eq:results:LDP_MM:K_p(mu)_I:det_bal}
		\end{multline}
	\end{enumerate}
	\label{prop:results:detailed_balance_limit_I}
\end{proposition}
The representation~\eqref{eq:results:LDP_MM:K_p(mu)_I:det_bal} follows from~\eqref{eq:results:LDP_MM:K_p(mu)_I} by rewriting the sums appearing therein as~$\sum_{ij} a_{ij} = \frac{1}{2} \sum_{ij} (a_{ij} + a_{ji})$, where
$$
a_{ij} = \int_{\mathbb{T}^d}
r_{ij}e^{-2\psi_i}
\sqrt{\overline{\mu}^i(x) \overline{\mu}^j(x)}
e^{\psi^j(x) + \psi^i(x)} e^{\phi(x,j) - \phi(x,i)}
\,\dd x.
$$
This leads to the $\cosh(\cdot)$ terms in \eqref{eq:results:LDP_MM:K_p(mu)_I:det_bal}, and proves (c). We now give the proof of (a) and (b) of Proposition \ref{prop:results:detailed_balance_limit_I}.
\begin{proof}[Proof of (a) in Proposition \ref{prop:results:detailed_balance_limit_I}]
	Let~$p\in\mathbb{R}^d$. The supremum in~\eqref{eq:results:LDP_MM:DV_var_rep_H(p)} can be taken over measures~$\mu$ such that~$I_p(\mu)$ is finite, because~$\mathcal{H}(p)$ is finite and $V_p(\cdot)$ is bounded. We show that finiteness of~$I_p(\mu)$ implies that $\mu$ must satisfy~(P1) and~(P2). To that end, define the map $L_{\text{rev}} : \mathcal{D}(L_{\mathrm{rev}}) \subseteq C(E^\prime) \to C(E^\prime)$ by setting $\mathcal{D}(L_{\mathrm{rev}}) := C^2(E^\prime)$ and
	$$
	L_{\mathrm{rev}} f(x,i)
	:=
	\frac{1}{2} \Delta_x f(x,i) - \nabla \psi_i(x) \cdot \nabla_x f(x,i)
	+
	\overline{\gamma} \sum_{j \neq i} s_{ij}(x)
	\left[
	f(x,j) - f(x,i)
	\right],
	$$
	with jump rates $s_{ij}$ defined as
	$
	s_{ij} \equiv 1
	$
	and
	$
	s_{ji} \equiv e^{2\psi_j - 2\psi_i},
	$
	for $i \leq j$, and with 
	$
	\overline{\gamma} := \sup_{\mathbb{T}^d} \left( r_{ij} / s_{ij}\right) < \infty,
	$
	where $r_{ij}(\cdot)$ are the jump rates appearing in~$L_p$. Furthermore, define $I_{L_{\text{rev}}} : \mathcal{P}(E^\prime) \to [0,\infty]$ by 
	$$
	I_{L_\mathrm{rev}}(\mu)
	:=
	-\inf_{\varphi \in C^2(E^\prime)}
	\int_{E^\prime} e^{-\varphi} L_{\mathrm{rev}} (e^{\varphi}) \, \dd\mu.
	$$
	We shall prove two statements:
	\begin{itemize}
	    \item[(I)] If~$I_{L_{\mathrm{rev}}}(\mu)$ is finite, then the measure~$\mu$ satisfies~(P1) and~(P2).
	    \item[(II)] If~$I_p(\mu)$ is finite, then~$I_{L_{\mathrm{rev}}}(\mu)$ is finite.
	\end{itemize}
	The two statements combined finish the proof.
	\smallskip
	
	Regarding~(I), suppose~$I_{L_\mathrm{rev}}(\mu)$ is finite. Since 
	$
	s_{ij} e^{-2\psi_i}
	=
	s_{ji} e^{-2\psi_j},
	$
	the operator $L_\mathrm{rev}$ admits a reversible measure $\nu_{\text{rev}}$ in $\mathcal{P}(E^\prime)$ given by
	$$
	\nu_{\mathrm{rev}}(A_1, \dots, A_J)
	=
	\frac{1}{\mathcal{Z}} \sum_{i = 1}^J \nu_{\mathrm{rev}}^i(A_i),
	\quad
	\text{ where }
	\dd\nu_{\mathrm{rev}}^i = e^{-2 \psi_i} \dd x
	\text{ and }
	\mathcal{Z} = \sum_i \nu_\mathrm{rev}^i(\mathbb{T}^d).
	$$
	The measure $\nu_{\mathrm{rev}}$ is reversible for $L_{\mathrm{rev}}$ in the sense that for all $f,g \in \mathcal{D}(L_\text{rev})$,
	$$
	\langle L_\mathrm{rev} f, g \rangle_{\nu_\mathrm{rev}}
	=	
	\langle f, L_\mathrm{rev} g \rangle_{\nu_\mathrm{rev}},
	\quad
	\text{ where }
	\langle f, h \rangle_{\nu_\mathrm{rev}}
	=
	\frac{1}{\mathcal{Z}} \sum_i \int_{\mathbb{T}^d} f^i(x)h^i(x) \, \dd\nu_\mathrm{rev}^i(x).
	$$
	By Stroock's result~\cite[Theorem 7.44]{stroock2012introduction},
	\begin{align*}
	I_{L_{\text{rev}}}(\mu)
	&=
	\begin{cases}
	\displaystyle
	-\langle f_\mu, L_\text{rev} f_\mu \rangle_{\nu_\text{rev}} ,& f_\mu = \sqrt{g_\mu} \in D^{1/2} := \mathcal{D}\left( \sqrt{-L_\text{rev}} \right) \text{ and } g_\mu = \frac{\dd\mu}{\dd \nu_\text{rev}}, \\
	\displaystyle +\infty ,& \text{otherwise},
	\end{cases}
	\end{align*}
	where $\dd\mu / \dd\nu_\text{rev}$ is the Radon-Nikodym derivative. In particular, since~$I_{L_\mathrm{rev}}(\mu)$ is finite, we find that~$\mu\ll\nu_{\mathrm{rev}}$ and that~$I_{L_\mathrm{rev}}(\mu)$ is explicitly given by
	\begin{multline}
	I_{L_{\text{rev}}}(\mu)
	=
	-\langle f, L_\text{rev} f \rangle_{\nu_\text{rev}}
	\\=
	\frac{1}{\mathcal{Z}}\sum_{i=1}^J
	\left[
	\int_{\mathbb{T}^d} |\nabla f^i(x)|^2 \, d\nu_\text{rev}^i(x)
	+
	\overline{\gamma} \sum_{j = 1}^J \int_{\mathbb{T}^d} s_{ij}(x)|f^j(x) - f^i(x)|^2 \, d\nu_\text{rev}^i(x)
	\right],
	\label{eq:LDP_MM:det_bal:I_L_rev_explicit}
	\end{multline}
	where we write $f^i = (\dd\mu^i/\dd{\nu^i_{\text{rev}}})^{1/2}$.	Furthermore, $\mu^i$ is absolutely continuous with respect to $\nu^i = e^{-2\psi^i} \dd x$. Since~$e^{-2\psi^i} \dd x \ll \dd x$,
	we find that~$\mu^i$ is absolutely continuous with respect to the volume measure on $\mathbb{T}^d$. Hence (P1) holds true.
	\medskip
	
	We verify (P2) by showing that the integral
	$
	\int_{\mathbb{T}^d} |\nabla (\log \overline{\mu}^i)|^2 \, d\mu^i
	$
	is finite. Let $g^i_\mu := \dd\mu^i / \dd\nu^i_{\mathrm{rev}}$ be the density of $\mu^i$ with respect to $\nu_\mathrm{rev}^i$. Then the densities $\overline{\mu}^i = \dd \mu^i/ \dd x$ satisfy~$g_\mu^i = \overline{\mu}^i e^{2\psi^i}$, because
	$$
	\overline{\mu}^i  
	=
	\frac{\dd\mu^i}{\dd\nu^i_\text{rev}} \frac{\dd\nu_\text{rev}^i}{\dd x}
	=
	\frac{\dd\mu^i}{\dd\nu^i_\text{rev}} e^{-2\psi^i}.
	$$
	Let $f_\mu^i := \sqrt{g_\mu^i}$. By~\eqref{eq:LDP_MM:det_bal:I_L_rev_explicit}, $\int_{\mathbb{T}^d}|\nabla f_\mu^i|^2 \dd\nu_\mathrm{rev}^i$ is finite for every $i = 1,\dots, J$. Hence with the estimate
	\begin{align*}
	\int_{\mathbb{T}^d} |\nabla f_\mu^i|^2 \dd\nu_\text{rev}^i
	&\geq
	\int_{\mathbb{T}^d} |\nabla f_\mu^i|^2 \mathbf{1}_{\{ \overline{\mu}^i > 0\}} \dd\nu_\text{rev}^i
	=
	\frac{1}{4}\int_{\mathbb{T}^d} \frac{|\nabla g_\mu^i|^2}{g_\mu^i} \mathbf{1}_{\{ \overline{\mu}^i > 0\}} \dd\nu^i_\text{rev}	\\
	&=
	\frac{1}{4}\int_{\mathbb{T}^d} \frac{| e^{2\psi^i} \nabla \overline{\mu}^i + 2 \overline{\mu}^i \nabla \psi^i e^{2\psi^i}|^2 }{\overline{\mu}^i} e^{-4\psi^i} \mathbf{1}_{\{ \overline{\mu}^i > 0\}} \dd x\\
	&=
	\frac{1}{4} \int_{\mathbb{T}^d} | \nabla (\log \overline{\mu}^i) + 2 \nabla \psi^i |^2 \mathbf{1}_{\{ \overline{\mu}^i > 0\}} \dd\mu^i	\\
	&\geq
	\frac 18 \int_{\mathbb{T}^d} |\nabla(\log \overline{\mu}^i)|^2 \mathbf{1}_{\{ \overline{\mu}^i > 0\}} \dd\mu^i
	-
	\int_{\mathbb{T}^d} |\nabla \psi^i|^2 \mathbf{1}_{\{ \overline{\mu}^i > 0\}} \dd\mu^i,
	\end{align*}
	we find~$\nabla (\log \overline{\mu}^i) \in L^2_{\mu^i}(\mathbb{T}^d)$.
	\smallskip
	
	Regarding~(II), suppose that~$I_p(\mu)$ is finite. We estimate $r_{ij} / s_{ij}$ from above by $\overline{\gamma} = \sup_{\mathbb{T}^d} (r_{ij} / s_{ij})$ to find
	\begin{multline*}
	I_p(\mu)
	\geq
	\sup_\varphi \sum_i \int_{\mathbb{T}^d}
	-
	\bigg[
	\frac{1}{2} \Delta \varphi^i(x) + \frac{1}{2} |\nabla \varphi^i(x)|^2 + (p - \nabla\psi^i(x)) \nabla \varphi^i(x) 
	\\+ 
	\overline{\gamma} \sum_{j\neq i} s_{ij}(x)
	(e^{\varphi(x,j) - \varphi(x,i)} -1)
	\bigg] \, \dd\mu^i - s_0(\mu),
	\end{multline*}
	where 
	$
	s_0(\mu)
	=
	\sum_{ij} \int_{\mathbb{T}^d} \left[ \overline{\gamma} \, s_{ij}(x) - r_{ij}(x) \right] \dd\mu^i
	$
	is finite. For $p = 0$, this means that $I_0 (\mu) \geq I_{L_\mathrm{rev}}(\mu) - s_0(\mu)$, and hence that $I_{L_\mathrm{rev}}(\mu)$ is finite. 
	\smallskip
	
	For $p \neq 0$, the additional $p$-term can be dealt with by Young's inequality applied as $-p\cdot \nabla \phi^i \geq - p^2/(2 \varepsilon) - \frac{\displaystyle \varepsilon}{2} |\nabla\phi^i|^2$. Thereby,
	\begin{multline*}
	I_p(\mu)
	\geq
	\sup_\varphi \sum_i \int_{\mathbb{T}^d}
	-
	\bigg[
	\frac{1}{2} \Delta \varphi^i(x) + \frac{1 + \varepsilon}{2} |\nabla \varphi^i(x)|^2 + - \nabla\psi^i(x) \nabla \varphi^i(x) 
	\\+ 
	\overline{\gamma} \sum_{j\neq i} s_{ij}(x)
	(e^{\varphi(x,j) - \varphi(x,i)} -1)
	\bigg] \, \dd\mu^i -\frac{p^2}{2\varepsilon}- s_0(\mu)
	\\=
	\frac{1}{\lambda}
	\sup_\varphi \sum_i \int_{\mathbb{T}^d}
	-
	\bigg[
	\frac{1}{2} \Delta \varphi^i(x) + \frac{1}{2} |\nabla \varphi^i(x)|^2 + - \nabla\psi^i(x) \nabla \varphi^i(x) 
	\\+ 
	\lambda \overline{\gamma} \sum_{j\neq i} s_{ij}(x)
	(e^{(\varphi(x,j) - \varphi(x,i)) / \lambda} -1)
	\bigg] \, \dd\mu^i
	-\frac{p^2}{2\varepsilon}- s_0(\mu),		
	\end{multline*}
	where the last equality follows by rescaling $\varphi \rightarrow \varphi / \lambda$, with $\lambda = 1 + \varepsilon > 1$. Therefore, apart from the factor $1/\lambda$ in the exponential term and the multiplicative factor $\lambda \overline{\gamma}$, we obtain the same estimate as above in the $p = 0$ case. Denoting the supremum term in the last line by $I^\lambda_{L_\text{rev}}$, we found the estimate
	\begin{equation}\label{eq:proof-H-formula:estimate-by-I-lambda}
	I_p(\mu)
	\geq
	\frac{1}{\lambda} I^\lambda_{L_\text{rev}}(\mu) - s_p(\mu),
	\end{equation}
	where $s_p(\mu) = (2\varepsilon)^{-1} p^2 + s_0(\mu)$. Hence~$I^\lambda_{L_{\text{rev}}}(\mu)$ is finite. To show that this enforces finiteness of~$I_{L_\text{rev}}(\mu)$, we prove that~$I_{L_\text{rev}}(\mu) = \infty$ implies~$I^\lambda_{L_{\text{rev}}}(\mu) = \infty$.
	\smallskip
	
	If $I_{L_\text{rev}}(\mu) = \infty$, then by definition there exist functions $\varphi_n$ such that
	\begin{multline*}
	a(\varphi_n)
	:=
	-\sum_{i=1}^J \int_{\mathbb{T}^d}
	\bigg[
	\frac{1}{2} \Delta\varphi_n^i + \frac{1}{2} |\nabla \varphi_n^i|^2
	-\nabla\psi^i \nabla \varphi_n^i
	\\+ 
	\overline{\gamma} \sum_{j\neq i} s_{ij}
	\left(
	e^{\varphi_n(x,j) - \varphi_n(x,i)} - 1
	\right)
	\bigg]
	\, \dd\mu^i(x)
	\end{multline*}
	diverges, that is~$a(\varphi_n)\to\infty$ as~$n\to \infty$.
	%
	Write
	\begin{multline*}
	a^\lambda(\varphi_n)
	:=
	-
	\sum_i \int_{\mathbb{T}^d}
	\bigg[
	\frac{1}{2} \Delta \varphi_n^i + \frac{1}{2} |\nabla \varphi_n^i|^2 - \nabla\psi^i \nabla \varphi_n^i 
	\\+ \lambda \overline{\gamma} \sum_{j\neq i} s_{ij}
	(e^{(\varphi_n(x,j) - \varphi_n(x,i)) / \lambda} -1)
	\bigg] \, \dd\mu^i
	\end{multline*}
	for the according evaluation of $\varphi_n$ in $I^\lambda_{L_\text{rev}}(\mu)$. By definition, $I^\lambda_{L_\text{rev}}(\mu) \geq a^\lambda(\varphi_n)$. We show that with~$C=C(\mu)>0$ defined by
	\[
	C(\mu) := \sum_{ij} \int_E \lambda \overline{\gamma} s_{ij}(x) \, \dd\mu^i(x),
	\]
	we have
	\begin{equation}\label{eq:proof:det-bal:estimate-below-with-constant}
	a^\lambda(\varphi_n) \geq a(\varphi_n) - C.
	\end{equation}
	To that end, define the sequences~$\overline{a}_n$ and~$\overline{a}^\lambda_n$ by
	%
	\begin{multline*}
	\overline{a}_n
	:=
	-\sum_{i=1}^J \int_{\mathbb{T}^d}
	\bigg[
	\frac{1}{2} \Delta\varphi_n^i + \frac{1}{2} |\nabla \varphi_n^i|^2
	-\nabla\psi^i \nabla \varphi_n^i
	\\+ 
	\overline{\gamma} \sum_{j\neq i} s_{ij}
	\left(
	e^{\varphi_n(x,j) - \varphi_n(x,i)} \mathbf{1}_{\{\varphi_n(x,j) - \varphi_n(x,i) \geq 0\}} - 1
	\right)
	\bigg]
	\, \dd\mu^i(x),
	\end{multline*}
	and
	%
	%
	\begin{multline*}
	\overline{a}^\lambda_n
	:=
	-
	\sum_i \int_{\mathbb{T}^d}
	\bigg[
	\frac{1}{2} \Delta \varphi_n^i + \frac{1}{2} |\nabla \varphi_n^i|^2 - \nabla\psi^i \nabla \varphi_n^i 
	\\+ 
	\lambda \overline{\gamma} \sum_{j\neq i} s_{ij}
	\left(
	e^{(\varphi_n(x,j) - \varphi_n(x,i)) / \lambda} \mathbf{1}_{\{ \varphi_n(x,j) - \varphi_n(x,i) \geq 0 \}} -1
	\right)
	\bigg] \, \dd\mu^i.
	\end{multline*}
	%
	By the elementary estimates~$e^x \geq e^x \mathbf{1}_{\{x \geq 0\}}$ and~$(e^x\mathbf{1}_{x\geq 0} - e^x) \geq -1$, we obtain the inequalities
	\[
	\overline{a}_n \geq a(\varphi_n) \qquad \text{and} \qquad a^\lambda(\varphi_n) \geq \overline{a}^\lambda_n - C.
	\]
	Furthermore, the bound~$\overline{a}_n\leq \overline{a}^\lambda_n$ is obtained by noting that
	\begin{multline*}
	\overline{a}_n - \overline{a}^\lambda_n
	=
	-\sum_i \int_{\mathbb{T}^d} \overline{\gamma} \sum_{j \neq i} s_{ij}
	\left( e^{\varphi_n^j - \varphi_n^i} \mathbf{1}_{\{ \varphi_n^j - \varphi_n^i \geq 0 \}} - 1 \right) d\mu^i
	\\+
	\lambda \overline{\gamma} \sum_i \int_{\mathbb{T}^d} s_{ij} 
	\left(
	e^{\varphi_n^j - \varphi_n^i} \mathbf{1}_{\{ \varphi_n^j - \varphi_n^i \geq 0 \}} - 1
	\right) \dd\mu^i	\\
	=
	\overline{\gamma} 
	(1 - \lambda)
	\sum_{ij} \int_{\mathbb{T}^d} s_{ij} d\mu^i
	+
	\overline{\gamma} \sum_{ij} \int_{\mathbb{T}^d} s_{ij} 
	\left(
	e^{(\varphi_n^j - \varphi_n^i) / \lambda} - e^{\varphi_n^j - \varphi_n^i}
	\right)
	\mathbf{1}_{\{ \varphi_n^j - \varphi_n^i \geq 0 \}}
	\dd\mu^i,
	\end{multline*}
	which is bounded above by zero since $\lambda = 1 + \varepsilon > 1$ and $e^{x / \lambda} \leq e^{x}$ for $x \geq 0$. In conclusion, we have
	\begin{align*}
	    a^\lambda(\varphi_n) 
	    &\geq \overline{a}^\lambda_n - C \\
	    &\geq \overline{a}_n - C\\
	    &\geq a(\varphi_n) - C,
	\end{align*}
	finishing the proof.
	%
\end{proof}
\begin{proof}[Proof of (b) of Proposition \ref{prop:results:detailed_balance_limit_I}]
	It is sufficient to show that for any $\mu \in \mathbf{P}$, the Donsker-Varadhan functional~$I_p(\mu)$ satisfies~\eqref{MM:eq:DV-functional-after-shift}.
	Integration by parts gives
	\begin{multline*}
	I_p(\mu)
	=
	-\inf_{\varphi} \sum_i \int_{\mathbb{T}^d}	
	\bigg[
	-\frac{1}{2} \nabla\varphi^i \nabla(\log \overline{\mu}^i)
	+ \frac{1}{2}|\nabla\varphi^i|^2
	+(p-\nabla\psi^i) \nabla\varphi^i
	\\+ \sum_j r_{ij}
	\left(
	e^{\varphi^j - \varphi^i} - 1
	\right)
	\bigg]
	\dd \mu^i,
	\end{multline*}
	where $\dd \mu^i = \overline{\mu}^i \dd x$. By a density argument, the infimum can be taken over functions~$\varphi$ such that~$\nabla\varphi_i \in L^{2}_{\mu^i}(\mathbb{T}^d)$. 
	Now shifting in the infimum as~$\varphi_i\to\varphi_i+\frac{1}{2}\log(\overline{\mu}_i) + \psi^i$, we find after some algebra that
	%
	\begin{multline*}
	I_p(\mu) = -\inf_\varphi \sum_i \int_{\mathbb{T}^d}
	\bigg[
	\frac{1}{2} |\nabla\varphi^i + p|^2
	- \frac{1}{2} | (p - \nabla\psi^i) - \frac{1}{2} \nabla \log\overline{\mu}^i|^2
	\\+
	\sum_j r_{ij}
	\left(
	\sqrt{\frac{\overline{\mu}^j}{\overline{\mu}^i}} e^{\psi^j - \psi^i} e^{\varphi^j - \varphi^i}
	- 1\right) \bigg]\,\dd\mu^i.	
	\end{multline*}
	The term containing the square roots and logarithms are not singular since they are integrated against $\dd\mu^i$, so that the integration is over the set $\{\overline{\mu}^i > 0\}$. Now writing out the terms and reorganizing them leads to the claimed equality.
\end{proof}
\appendix
\section{Large-deviation principle implies almost-sure convergence}
It is a well-known fact that a large-deviation principle implies a strong type of convergence of random variables. We provide a sketch of proof here since we know of no reference in the literature. 
Let~$\mathcal{I}:\mathcal{X}\to[0,\infty]$ be a rate function. We denote by~$\{\mathcal{I} = 0 \}$ the set of its global minimizers.
\begin{theorem}
	\label{thm:LDP-implies-as}
	For $n=1,2\dots$, let $X^n$ be a random variable taking values in a Polish space~$(\mathcal{X},d)$. Suppose that~$\{X^n\}_{n \in \mathbb{N}}$ satisfies a large-deviation principle with rate function~$\mathcal{I}$. Then~$d(X^n,\{\mathcal{I} = 0\}) \to 0$ almost surely as~$n \to \infty$.
\end{theorem}
We point out that as specified in Definition~\ref{def:rate-function}, the rate function in Theorem~\ref{thm:LDP-implies-as} is assumed to have compact sub-level sets.
\begin{proof}[Proof of Theorem~\ref{thm:LDP-implies-as}]
For~$k,n\in\mathbb{N}$, let~$A_k^n$ be the event
\begin{equation*}
    A_k^n := \left\{d(X^n,\{\mathcal{I}=0\})\geq 1/k\right\},
\end{equation*}
and write
\begin{equation*}
    A_k^n\;\text{i.o.} := \bigcap_{N\geq 1}\bigcup_{n\geq N} A_k^n.
\end{equation*}
Let~$k\in\mathbb{N}$. By the large-deviation upper bound, there exists a~$\delta>0$ such that for all~$n$ sufficiently large,
    \begin{equation*}
        \mathbb{P}\left(A_k^n\right) \leq e^{-n\delta}.
    \end{equation*}
    Therefore~$\sum_{n=1}^\infty\mathbb{P}\left(A_k^n\right)$ is finite, and by the Borel-Cantelli Lemma,
    \begin{equation*}
        \mathbb{P}\left(A_k^n \; \text{i.o.}\right) = 0.
    \end{equation*}
    With that, almost-sure convergence follows by noting that
    \begin{align*}
        \mathbb{P}\left(\{d(X^n,\{\mathcal{I} = 0\})\xrightarrow{n\to\infty} 0\} \;\text{is not true}\right) \leq \sum_{k=1}^\infty\mathbb{P}\left(A_k^n \; \text{i. o.}\right) = 0.
    \end{align*}
\end{proof}
\section{Principal eigenvalues}
In this section we collect results on principal-eigenvalue problems that we encounter in the proofs of the molecular-models.
\begin{proposition}
	Let~$P$ be a second-order uniformly elliptic operator given by
	\begin{equation}
		P
	= 
		-\sum_{k \ell} a_{k \ell}(\cdot) \frac{\partial^2}{\partial x^k \partial x^\ell} + \sum_k b_k(\cdot) \frac{\partial}{\partial x^k} + c(\cdot),
	\label{eq:appendix:uniform_elliptic_op}
	\end{equation}
	with smooth coefficients $a_{k \ell}, b_k, c \in C^\infty(\mathbb{T}^d)$. Then there exists a strictly positive function~$u \in C^\infty(\mathbb{T}^d)$ and a unique~$\lambda \in \mathbb{R}$ such that~$Pu=\lambda u$, and~$\lambda$ is given by
	$$
		\lambda
	=
		\sup_{g > 0} \inf_{x\in \mathbb{T}^d} \left[\frac{P g(x)}{g(x)}\right]
	=
		\inf_{\mu \in \mathcal{P}(\mathbb{T}^d)} 
		\sup_{g > 0}
	\left[
		\int_{\mathbb{T}^d}
		\frac{Pg}{g} \,d\mu
	\right].
	$$
	\label{proposition:appendix:PrEv:elliptic_op}
	\end{proposition}
	\begin{proposition}
	Let $L : C^2(\mathbb{T}^d)^J \rightarrow C(\mathbb{T}^d)^J$ be a $J\times J $ diagonal matrix of uniformly elliptic operators,
	\begin{align}
	L
	=
		\begin{pmatrix}
    	L^{(1)}	&	&	0	\\
    		& \ddots	&	\\
    	0	&	& L^{(J)}       
  		\end{pmatrix},\;L^{(i)} = -\sum_{k \ell}^J a_{k \ell}^{(i)}(\cdot) \frac{\partial^2}{\partial x^k \partial x^\ell} + \sum_k^J b^{(i)}_k(\cdot)\frac{\partial}{\partial x^k} + c^{(i)}(\cdot),
  	\label{eq:appendix:elliptic_operators_in_coupled_system}
	\end{align}
	with
	$
		a_{k \ell}^{(i)}(\cdot), b_k^{(i)}(\cdot), c^{(i)}(\cdot) \in C^\infty(\mathbb{T}^d),
	$
	and let $R$ be a $J\times J$ matrix with non-negative functions on the off-diagonal,
	\begin{align*}
	R
	=
	\begin{pmatrix}
		R_{11}	&	&	\geq 0	\\
    		& \ddots	&	\\
    	\geq 0	&		&	R_{JJ}
	\end{pmatrix}, 
	\qquad
	R_{ij} \geq 0 \text{  for all  } i \neq j.
	\end{align*}
	Suppose that the matrix $\overline{R}$ with entries $\overline{R}_{ij} := \sup_{y \in \mathbb{T}^d} R_{ij}(y)$ is irreducible.
	Then for the operator $P:= L - R$, there exists a unique~$\lambda \in \mathbb{R}$ and a strictly vector~$u \in \left(C^\infty(\mathbb{T}^d)\right)^J$, $u^i(\cdot) > 0$ for all $i = 1, \dots, J$, such that~$Pu=\lambda u$. Furthermore,~$\lambda$ is given by
	$$
		\lambda
	=
		\sup_{g > 0} \inf_{z \in E^\prime} \left[\frac{P g(z)}{g(z)}\right]
	=
		\inf_{\mu \in \mathcal{P}(E^\prime)} 
		\sup_{g > 0}
	\left[
		\int_{E^\prime}
		\frac{Pg}{g} \,d\mu
	\right].
	$$
	\label{proposition:appendix:PrEv:fully_coupled_system}
	\end{proposition}	
In the above propositions, the eigenvalue~$\lambda$ is referred to as the principal eigenvalue. 
The principal-eigenvalue problem on closed manifolds, such as the torus~$\mathbb{T}^d$, is solved for instance by Padilla~\cite{padilla1997principal}. Donsker and Varadhan's variational representations for principal eigenvalues, \cite{DonskerVaradhan75, DonskerVaradhan76}, apply to the case of compact metric spaces without boundary. A proof of how to obtain the principal eigenvalue for coupled systems of equations is given by Sweers~\cite{Sweers92} and Kifer~\cite{kifer1992principal}. Sweers considers a Dirichlet boundary problem, but his results transfer to the compact setting without boundary. Kifer gives an independent proof for the case of a compact manifold, in Lemma 2.1 and Proposition 2.2 in \cite{kifer1992principal}.
\section*{Acknowledgement}
The authors thank Frank Redig, Francesca Collet and Federico Sau for their remarks and suggestions during a couple
of meetings.
MS also thanks Georg Prokert, Jim Portegies and Richard Kraaij for answering
various questions about principal eigenvalues, measure theory and large deviations. The authors acknowledge
financial support through NWO grant 613.001.552.
\bibliographystyle{alpha}
\bibliography{refmikola}
\end{document}